\definecolor{DeepPink}{RGB}{255,20,147} 
\definecolor{DeepPurple}{RGB}{160,5,220}
\def\semicolon{;}
\def\applytolist#1{
    \expandafter\def\csname multi#1\endcsname##1{
        \def\multiack{##1}\ifx\multiack\semicolon
            \def\next{\relax}
        \else
            \csname #1\endcsname{##1}
            \def\next{\csname multi#1\endcsname}
        \fi
        \next}
    \csname multi#1\endcsname}
\def\calc#1{\expandafter\def\csname #1\endcsname{{\mathbb #1}}}
\def\calc#1{\expandafter\def\csname bf#1\endcsname{{\mathbf #1}}}
\def\calc#1{\expandafter\def\csname cal#1\endcsname{{\mathcal #1}}}
\def\calc#1{\expandafter\def\csname s#1\endcsname{{\mathscr #1}}}
\def\calc#1{\expandafter\def\csname frak#1\endcsname{{\mathfrak #1}}}
\def\calc#1{\expandafter\def\csname tb#1\endcsname{{\text{\textbf{#1}}}}}
\newcommand{\angbra}[1]{\left\langle #1\right\rangle}
\newcommand{\inj}[0]{\ar@{^{(}->}}
\newcommand{\nbar}[1]{\overline{#1}}
\newcommand{\dcup}[0]{\displaystyle\bigcup}
\newcommand{\dsqcup}{\displaystyle\bigsqcup}
\newcommand{\ph}{\varphi}
\newcommand{\toup}[1]{\stackrel{#1}{\longrightarrow}}
\newcommand{\Hom}{\operatorname{Hom}}
\newcommand{\into}{\hookrightarrow}
\newcommand{\cl}{\operatorname{cl}}
\newcommand{\bdry}{\partial}
\newcommand{\vspan}{\operatorname{span}}
\newcommand{\conv}{\operatorname{Conv}}
\newcommand{\rec}{\operatorname{rec}}
\newcommand{\scalefactor}{0.9}
\newcommand{\fncolon}{\colon}
\newcommand{\aff}{\operatorname{aff}}
\newcommand{\sdrop}{\smallsetminus}
\newcommand{\smallfan}{\Sigma}
\newcommand{\bigfan}{\Sigma'}
\newcommand{\smallcpx}{\Phi}
\newcommand{\bigcpx}{\Pi}
\newcommand{\bigCpx}{\bigcpx}
\newcommand{\subdividedCpx}{\widetilde{\bigcpx}}
\newcommand{\htzero}[2]{#1|_{#2\times\{0\}}}
\newcommand{\htone}[2]{#1|_{#2\times\{1\}}}
\newcommand{\hti}[3]{#1|_{#2\times\{#3\}}}
\newcommand{\hticurrently}[3]{\hti{#1}{#2}{#3}}
\newcommand{\cpxToComplete}{\smallcpx}
\newcommand{\completeCpx}{\nbar{\cpxToComplete}}
\newcommand{\completecpx}{\completeCpx}
\newcommand{\Face}{\operatorname{Face}}
\newcommand{\Faces}{\Face}
\newcommand{\properFaces}[1]{\Delta_{#1}}
\newcommand{\halfspaceSmallFan}{\Delta}
\newcommand{\halfspaceBigFan}{\Delta'}
\newcommand{\halfspacebigfan}{\halfspaceBigFan}
\newcommand{\subfieldOfR}{\mathbb{F}}
\newcommand{\Pzon}{\calP_{zon}}
\newcommand{\pairingWithDots}{\angbra{\cdot\,,\cdot}}
\newtheorem{theorem}{Theorem}[section]
\newtheorem{lemma}[theorem]{Lemma}
\newtheorem{proposition}[theorem]{Proposition}
\newtheorem{thm}[theorem]{Theorem}
\newtheorem{prop}[theorem]{Proposition}
\theoremstyle{definition}
\newtheorem{remark}[theorem]{Remark}
\newtheorem{example}[theorem]{Example}
\newtheorem{defi}[theorem]{Definition}
\newtheorem{construction}[theorem]{Construction}
\title{Locally finite completions of polyhedral complexes}
\author{Desmond Coles, Netanel Friedenberg}
\begin{document}

\begin{abstract}
We develop a method for subdividing polyhedral complexes in a way that restricts the possible recession cones and allows one to work with a fixed class of polyhedron. We use these results to construct locally finite completions of rational polyhedral complexes whose recession cones lie in a fixed fan, locally finite polytopal completions of polytopal complexes, and locally finite zonotopal completions of zonotopal complexes.
\end{abstract}

\maketitle

\section{Introduction}

The goal of this article is to develop a method for completing a polyhedral complex such that the recession cones of its polyhedra lie in a fixed collection, and the resulting completion is locally finite. Let $N$ be a lattice of rank $n$ and set $N_{\R}=N\otimes_{\Z}\R$. Let $\cpxToComplete$ be a polyhedral complex in $N_{\R}$ (which is not necessarily finite). We say that $\cpxToComplete$ is \textit{complete} if $\cup_{P\in \cpxToComplete} P = N_{\R}$; a \textit{completion} of $\cpxToComplete$ is a polyhedral complex $\completeCpx \supseteq \cpxToComplete$ which is complete.  Let $\rec P$ denote the recession cone of a polyhedron $P$. Our first result is the following.

\begin{thm}\label{thm:LocallyFiniteCompletions}
Let $\Gamma $ denote an additive subgroup of $\R$. Let $\smallfan$ be a finite rational fan in $N_{\R}$ and let $\cpxToComplete$ be a finite $\Gamma$-rational polyhedral complex in $N_{\R}$ such that the recession cone $\rec P$ of any $P\in\cpxToComplete$ is in $\smallfan$. 
Then there is a complete $\Gamma$-rational polyhedral complex $\completeCpx$ in $N_{\R}$ containing $\cpxToComplete$ as a subcomplex such that 
\begin{itemize}
\item $\{\rec P\mid P\in\completeCpx\}=\smallfan$ and
\item $\completeCpx$ is locally finite in the tropical toric variety $N_{\R}(\smallfan)$.
\end{itemize}
Moreover, if $\cpxToComplete$ admits a finite completion whose vertices are all in $N_{\Gamma}$ then $\completecpx$ can be chosen so that all of its vertices are in $N_{\Gamma}$.
\end{thm}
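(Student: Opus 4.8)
The plan is to build the completion by working in the tropical toric variety $N_{\R}(\smallfan)$, where the recession-cone constraint becomes a statement about how the complex approaches the boundary strata. First I would recall the structure of $N_{\R}(\smallfan)$: it is stratified by the cones $\sigma\in\smallfan$, with the stratum over $\sigma$ being $N_{\R}/\vspan(\sigma)$, and the condition "$\rec P\in\smallfan$ for all $P\in\cpxToComplete$" is exactly what is needed for the closure $\nbar{\cpxToComplete}$ inside $N_{\R}(\smallfan)$ to be a well-behaved polyhedral complex whose cells in the boundary strata are the recession-type pieces of the $P\in\cpxToComplete$. So the first step is to pass to $\nbar{\cpxToComplete}$ and reduce the problem to: complete a finite polyhedral complex inside the compact-ish space $N_{\R}(\smallfan)$ to a locally finite one, stratum by stratum, while keeping everything $\Gamma$-rational and keeping recession cones inside $\smallfan$.

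The core of the argument, I expect, is an inductive construction over the strata of $N_{\R}(\smallfan)$, ordered by dimension of $\sigma$ (equivalently, by codimension of the stratum). The base case is the deepest strata — the maximal cones of $\smallfan$, whose strata are points (if $\smallfan$ is complete of full dimension) or at any rate low-dimensional — where completeness is easy or vacuous. Inductively, suppose we have completed the complex on the closed union of all strata of codimension $> k$; we then extend over each codimension-$k$ stratum. Here is where I would invoke the subdivision machinery advertised in the abstract ("subdividing polyhedral complexes in a way that restricts the possible recession cones and allows one to work with a fixed class of polyhedron"): the point is that on a single stratum $N_{\R}/\vspan(\sigma)$ we want to complete a finite complex to a locally finite complete one, and we must do so \emph{compatibly} with what has already been built on the smaller strata in the closure. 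The subdivision results should let us arrange that the cells meeting a given boundary stratum have recession behavior governed by $\smallfan$, so the extension is possible; local finiteness in $N_{\R}(\smallfan)$ is then exactly the statement that only finitely many cells meet any given compact neighborhood, including neighborhoods of boundary points.

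For the "moreover" clause — controlling the vertices — I would run the same construction but seeded differently. Suppose $\cpxToComplete$ admits \emph{some} finite completion $\cpxToComplete'$ with all vertices in $N_{\Gamma}$. The idea is to not discard $\cpxToComplete'$ but to use it as scaffolding: apply the main construction to $\cpxToComplete'$ in place of $\cpxToComplete$. Since $\cpxToComplete'$ is already complete in $N_{\R}$, the only thing left to do is to make it locally finite in $N_{\R}(\smallfan)$ and to fix up recession cones, and the subdivisions and extensions added in that process introduce new vertices only as intersections of existing $\Gamma$-rational affine pieces and of translates of rays of $\smallfan$ — all of which lie in $N_{\Gamma}$ provided the subdividing hyperplanes are chosen $\Gamma$-rationally, which the finiteness of $\smallfan$ and of $\cpxToComplete'$ permits. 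One then checks that $\completeCpx \supseteq \cpxToComplete' \supseteq \cpxToComplete$, so it is indeed a completion of the original complex.

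The main obstacle, I expect, is the compatibility in the inductive step: when extending the complex over a codimension-$k$ stratum, the already-constructed cells in the closure of that stratum impose boundary data, and one must complete a finite polyhedral complex in $N_{\R}/\vspan(\sigma)$ whose prescribed "behavior at infinity" is dictated both by the image of $\smallfan$ and by these boundary cells — and do it while staying $\Gamma$-rational. Making the subdivision machinery interact correctly with the stratification (so that recession cones land in $\smallfan$ rather than merely in some refinement) and ensuring the local finiteness survives at boundary points of all strata simultaneously is the delicate part; everything else is bookkeeping about polyhedral complexes and closures in $N_{\R}(\smallfan)$.
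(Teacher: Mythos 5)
Your proposal gets the ambient space right and correctly identifies that local finiteness is about controlling accumulation at the boundary strata of $N_{\R}(\smallfan)$, but at the two places where the theorem actually has content it substitutes an appeal to unspecified machinery, and the stratum-by-stratum induction does not reduce the problem. The strata of $N_{\R}(\smallfan)$ of positive codimension carry only the ``recession shadow'' of the complex; the completion itself must be built in the open stratum $N_{\R}$, and extending ``over the codimension-$0$ stratum compatibly with boundary data'' is the entire problem, for which you give no argument. Concretely, two ingredients are missing. First, the existence of \emph{some} finite completion of $\cpxToComplete$ whose recession cones together with $\smallfan$ form a fan: in the paper this is obtained by coning $\cpxToComplete$ and $\smallfan$ into $N_{\R}\times\R_{\geq0}$ (Lemma \ref{lemma:ComplexToFan}) and invoking finite completion theorems for fans ($\Gamma$-admissible, resp.\ $\subfieldOfR$-definable); this external input cannot be bypassed by the induction you describe. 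Second, the actual subdivision mechanism that converts such a finite completion (whose unbounded cells may have recession cones outside $\smallfan$) into the desired locally finite one: for each cell $P$ with $\sigma=\rec P\notin\smallfan$ one writes $P$ as the Minkowski sum of the union $B_P$ of faces with strictly smaller recession cone and a ray $\rho_\sigma$ through the relative interior of $\sigma$ (Lemma \ref{lemma:RayBoundaryDecomp}), subdivides $\rho_\sigma$ into segments, and takes $\{Q+R\}$, recursing on $\dim\rec P$ (Construction \ref{construction:Subdivisions}, Proposition \ref{prop:MainSubdivisions}); it is this explicit product structure, together with the homeomorphism $\rho\times\nbar{B}\to\nbar{P}$ in $N_{\R}(\smallfan)$, that simultaneously forces all recession cones into $\smallfan$ and yields local finiteness at boundary points. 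Your proposal names neither construction, so the ``extension is possible'' step is a genuine gap rather than a sketch.

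On the ``moreover'' clause your strategy (seed the construction with the given finite completion having vertices in $N_{\Gamma}$, then subdivide $\Gamma$-rationally) does match the paper's, but your justification is flawed: new vertices arising as intersections of $\Gamma$-rational affine pieces are only guaranteed to lie in $N_{\Q\Gamma}$, not in $N_{\Gamma}$ (this is exactly the issue in Remark \ref{Rem:VerticesClarification}). The correct reason the vertices stay in $N_{\Gamma}$ is that every new cell is a Minkowski sum $Q+R$ with the addition map $Q\times R\to Q+R$ an affine isomorphism, so its vertices are sums of vertices of $Q$ and of $R$, and one chooses the ray subdivisions at points of $N_{\Gamma}$; this is why the paper verifies condition $(\dagger)$ for the class of $\Gamma$-rational polyhedra with vertices in $N_{\Gamma}$ and applies Theorem \ref{thm:GeneralSubdivisions} directly to the given completion.
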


\noindent  Recall that a polyhedron $P$ in $N_{\R}$ is called \emph{$\Gamma$-rational} if it can be written in the form 
$$P=\{w\in N_{\R}\mid \angbra{u_i,w}\geq \gamma_i\text{ for }i=1,\ldots,m\}$$ 
with $u_1,\ldots,u_m\in M$ and $\gamma_1,\ldots,\gamma_m\in\Gamma$.  The tropical toric variety $N_{\R}(\smallfan)$ is a partial compactification of $N_{\R}$, see \S\ref{subsec:TropicalToricVars} for the definition of $N_{\R}(\smallfan)$. See \cite[\S3]{LimitOfTrops} for further discussion on $N_{\R}(\Sigma)$ and the connection to algebraic geometry. We will have to work with tropical toric varieties for future algebro-geometric applications discussed below. Here $N_{\Gamma}$ denotes $N\otimes_\Z \Gamma$. All $\Gamma$-rational polyhedron have vertices in $N_{\Q \Gamma}:= N \otimes_{\Z} \Q \Gamma$, where $\Q \Gamma$ is the divisible hull of $\Gamma$. Note that it is not always the case that a $\Gamma$-rational polyhedron has vertices in $N_{\Gamma}$; see Remark \ref{Rem:VerticesClarification}.

In the case where $\Gamma=\Q$, Theorem \ref{thm:LocallyFiniteCompletions} gives a result about locally finite completions of rational polyhedral complexes. In Theorem \ref{thm:LocallyFiniteKDefinableCompletions} we generalize this to complexes of polyhedra which can be defined using coefficients in a given subfield of $\R$.

The motivation for Theorem \ref{thm:LocallyFiniteCompletions} comes from algebraic geometry; in \cite{GublerSoto} a correspondence between toric varieties over a valuation ring and certain fans is established. In a companion article we will apply Theorem \ref{thm:LocallyFiniteCompletions} to construct of algebraizable formal models of algebraic varieties. Our method also allows us to construct completions while working with a fixed class of polyhedron (such as zonotopes). We prove two other results on completions, one about polytopal completions, Theorem \ref{thm:LocallyFiniteRealPolytopalCompletions}, and one about zonotopal completions, Theorem \ref{thm:ZonotopalCompletions}. Let $W$ denote an arbitrary finite dimensional real vector space; we will use $N_{\R}$ when we need to keep track of particular additive subgroups of our vector space and otherwise we will use $W$ for simpler notation.

\begin{thm}\label{thm:LocallyFiniteRealPolytopalCompletions}
Every finite polytopal complex in $W$ admits a polytopal completion that is locally finite in $W$.
\end{thm}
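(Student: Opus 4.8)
Write $\Phi$ for the given finite polytopal complex and $|\Phi|:=\bigcup_{P\in\Phi}P$ for its support. The plan is to reduce the statement to filling in the exterior of a large polytope, and then to tile that exterior by dilates of a polytopal complex on its boundary; morally this is the case $\smallfan=\{0\}$ of Theorem~\ref{thm:LocallyFiniteCompletions}, but since $W$ carries no lattice we argue directly. Since $\Phi$ is finite, $|\Phi|$ is compact, so I may fix a full-dimensional polytope $C\supseteq|\Phi|$ and, after a translation, assume $0\in\operatorname{int}(C)$. Using standard facts about polytopal subdivisions (equivalently, the subdivision methods developed in this article), extend $\Phi$ to a finite polytopal complex $\Phi'$ with $\Phi'\supseteq\Phi$ and $|\Phi'|=C$, and set $\Psi:=\{P\in\Phi'\mid P\subseteq\partial C\}$, a finite subcomplex with $|\Psi|=\partial C$.

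To fill in the exterior, for $k\ge0$ and a cell $\sigma\in\Psi$ with vertices $v_0,\dots,v_m$ I put
\[
S^\sigma_k\ :=\ \bigcup_{t\in[2^k,\,2^{k+1}]}t\sigma\ =\ \Bigl\{\,\textstyle\sum_i\beta_i v_i\ \Big|\ \beta_i\ge0,\ 2^k\le\sum_i\beta_i\le2^{k+1}\,\Bigr\}.
\]
Being the image of a polytope under the linear map $\beta\mapsto\sum_i\beta_i v_i$, the set $S^\sigma_k$ is a polytope, and a short computation identifies its faces as exactly the $S^\tau_k$, the $2^k\tau$, and the $2^{k+1}\tau$ with $\tau\preceq\sigma$; in particular the collection
\[
\overline{\Phi}\ :=\ \Phi'\ \cup\ \{\,2^k\tau\mid\tau\in\Psi,\ k\ge1\,\}\ \cup\ \{\,S^\sigma_k\mid\sigma\in\Psi,\ k\ge0\,\}
\]
is closed under taking faces. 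I claim $\overline{\Phi}$ is the desired completion.

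That $\overline{\Phi}$ is a polytopal complex is then a routine, if lengthy, verification: for distinct $k$ the dilates $2^k\sigma$ lie on the disjoint ``spheres'' $\partial(2^kC)$; one has $\cone(\sigma)\cap\cone(\sigma')=\cone(\sigma\cap\sigma')$ for $\sigma,\sigma'\in\Psi$ because $0\in\operatorname{int}(C)$ forces each ray from $0$ to meet $\partial C$ exactly once, which in turn forces any two cells of $\overline{\Phi}$ to intersect in a common face; and $S^\sigma_0$ meets $\Phi'$ only along $2^0\sigma=\sigma\subseteq\partial C$, which glues the exterior onto $\Phi'$. It is complete because $\bigcup_{t\ge1}t\,\partial C=W\setminus\operatorname{int}(C)$ (again using $0\in\operatorname{int}(C)$ and convexity of $C$), so $|\overline{\Phi}|=C\cup(W\setminus\operatorname{int}(C))=W$, and $\Phi\subseteq\Phi'\subseteq\overline{\Phi}$ as subcomplexes. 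For local finiteness, note that for each $k$ the subcollection $\overline{\Phi}_{\le k}:=\Phi'\cup\{2^j\tau\mid1\le j\le k\}\cup\{S^\sigma_j\mid0\le j\le k\}$ is a \emph{finite} subcomplex with support $2^{k+1}C$, and that $2^kC\subseteq\operatorname{int}(2^{k+1}C)$; since $\bigcup_k2^kC=W$, every point of $W$ lies in some open set $\operatorname{int}(2^{k+1}C)$, which meets only the finitely many cells of $\overline{\Phi}_{\le k+1}$, because every other cell $2^j\tau$ or $S^\sigma_j$ has $j\ge k+2$ and hence lies in $W\setminus\operatorname{int}(2^{k+1}C)$. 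Therefore $\overline{\Phi}$ is locally finite in $W$.

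The main obstacle is the interior-filling step --- extending an arbitrary finite polytopal complex to a polytopal subdivision of a containing polytope while keeping it as a literal subcomplex --- which is where one must invoke (or reprove) subdivision results. Once that is in hand, the dilation-by-$2^k$ device makes the exterior construction, its completeness, and its local finiteness essentially automatic, the remaining effort being only the unenlightening bookkeeping that verifies the polytopal complex axioms for $\overline{\Phi}$.
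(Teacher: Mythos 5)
Your construction for the exterior region is genuinely different from the paper's, and it is correct as far as it goes: the collection of dilates $2^k\tau$ and slabs $S^\sigma_k$ does form a polytopal complex tiling $W\sdrop\operatorname{int}(C)$, the face and intersection checks you sketch go through using the radial function $r(w)=\inf\{t>0\mid w/t\in C\}$ and the fact that $\cone(\sigma)\cap\cone(\sigma')=\cone(\sigma\cap\sigma')$ for $\sigma,\sigma'\in\Psi$, and the nested-dilates argument for local finiteness is clean. The paper instead goes through Theorem \ref{thm:GeneralLocallyFiniteCompletions} with $\calP$ the class of all polyhedra and $\smallfan$ the zero fan: it first produces a \emph{finite} completion of $\Phi$ whose cells are allowed to be unbounded (via the fan-completion theorems of Ewald--Ishida and Rohrer applied to the cone over $\Phi\times\{1\}$ in $W\times\R_{\ge 0}$), and then uses Construction \ref{construction:Subdivisions} to chop each unbounded cell into Minkowski sums of its boundary faces with segments along a ray in its recession cone, which forces every surviving recession cone to be $\{0\}$. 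So both proofs ultimately replace the exterior by a ``layered'' family of polytopes, but the paper reaches this via subdivision of unbounded cells rather than by dilation.

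The problem is the step you flag yourself as ``the main obstacle,'' the interior-filling step: extending a finite polytopal complex $\Phi$ to a finite polytopal complex $\Phi'$ with $|\Phi'|=C$ and $\Phi\subseteq\Phi'$ a literal subcomplex. You treat this as standard, but it is not a standard fact, and it is not supplied by ``the subdivision methods developed in this article'' in the way you suggest. Proposition \ref{prop:MainSubdivisions} and Theorem \ref{thm:GeneralSubdivisions} take as input an already-existing extension $\Pi\supseteq\Phi$ covering the desired region and refine it; they do not produce the extension. The paper obtains its extension $\Pi$ from Proposition \ref{prop:CasesAdmitingFiniteCompletions} via fan completion in $W\times\R_{\ge 0}$, and the resulting $\Pi$ is a complete polyhedral complex whose cells are typically \emph{unbounded} --- in particular, slicing it with $C$ does not yield a polytopal subdivision of $C$, because cells of $\Pi$ can cross $\partial C$. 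Conversely, deducing the interior-filling statement from Theorem \ref{thm:LocallyFiniteRealPolytopalCompletions} (take a locally finite polytopal completion of $\Phi\cup\partial\Faces(C)$ and restrict to cells inside $C$, noting that no cell can cross $\partial C$ because $\partial\Faces(C)$ is a subcomplex) is easy but circular, since it uses the very theorem you are proving. Naive alternatives such as intersecting $C$ with the hyperplane arrangement spanned by the facets of $\Phi$ fail because they refine the cells of $\Phi$ rather than leaving them intact, and lower-dimensional ``floating'' cells of $\Phi$ need not even be cells of the arrangement. So the interior-filling step carries essentially the full weight of the theorem, and as written your argument relocates rather than closes the difficulty. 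If you want to make the dilation proof self-contained, you would need to reprove interior-filling directly, which in effect means reproving the recession-restricted subdivision machinery of \S\ref{sec:LocallyFiniteSubdivisions}--\ref{sec:LocallyFiniteCompletions} in the bounded setting.
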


\begin{remark}
Theorem \ref{thm:LocallyFiniteRealPolytopalCompletions} is a polytopal generalization of Whitehead’s Completion Lemma, which says that any finite simplicial complex in a real vector space admits a locally finite completion \cite[Page 238]{ZieglerPolytopes}, \cite[Section 4]{WhiteheadSubdivisions}. We note also that Whitehead’s Completion Lemma follows from Theorem \ref{thm:LocallyFiniteRealPolytopalCompletions} by ordering the vertices and applying a pulling triangulation; this proof of the simplicial completion lemma is new and different from the traditional one.
\end{remark}

\begin{thm}\label{thm:ZonotopalCompletions}
Let $\cpxToComplete$ be a finite zonotopal complex in $W$. If $|\cpxToComplete|$ is a polytope or a star-shaped ball then $\cpxToComplete$ admits a zonotopal completion which is locally finite in $W$.
\end{thm}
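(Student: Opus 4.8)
The plan is to reduce both cases to a single core construction and then leverage the zonotopal version of the subdivision machinery that has already been developed. First I would treat the polytope case. If $|\cpxToComplete|$ is a polytope $Q$, I would first enlarge $\cpxToComplete$ to a zonotopal complex supported on a \emph{zonotope} $Z\supseteq Q$: pick a large zonotope (e.g. a big cube, or a sum of segments in the edge directions appearing in $\cpxToComplete$) containing $Q$ in its interior, and use the zonotopal subdivision results cited/proved earlier in the paper to subdivide $Z$ into a zonotopal complex that restricts to $\cpxToComplete$ on $Q$. Since $Z$ is itself a zonotope, it is in particular a polytope, and now I am in the situation of completing a zonotopal complex whose support is a star-shaped ball (indeed convex), so the polytope case follows from the star-shaped ball case applied to $Z$ — modulo the subdivision step, which is where I would cite the paper's zonotopal subdivision theorem. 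Alternatively, and perhaps more cleanly, I would note that a zonotope is star-shaped about any interior point, so the polytope case is literally an instance of the star-shaped case and no separate argument is needed beyond observing that a polytope which is a zonotopal complex support is star-shaped.

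So the real content is the star-shaped ball case. Suppose $|\cpxToComplete|$ is a star-shaped ball, star-shaped about a point $p$ in its interior; after translating, assume $p=0$. The idea is to build the completion in two stages: an ``outward radial'' stage that fills a neighborhood of $|\cpxToComplete|$ out to a large zonotope, and then an ``unbounded'' stage. For the first stage, because $|\cpxToComplete|$ is star-shaped about $0$, the boundary complex $\bdry\cpxToComplete$ is visible from $0$, and I would cone off: for each facet $F$ of $\bdry\cpxToComplete$, the ``radial shell'' swept between $F$ and its image under scaling by a factor $\lambda>1$ is naturally a prism-like region. The subtlety is that a prism over a zonotope need not be subdividable into zonotopes in a way compatible with a given zonotopal complex on one end; but a prism $F\times[0,1]$ over a zonotope $F$ \emph{is} a zonotope (it is $F + [0,e]$ for a new direction $e$), and more generally the region between $F$ and $\lambda F$ inside the cone over $F$ is a truncated cone which I can subdivide using the zonotopal subdivision tools. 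I would choose $\lambda$ large enough that $\lambda\cdot|\cpxToComplete|$ contains a fixed large cube $C$; intersecting the radial subdivision with $C$ and using the paper's machinery to make everything zonotopal and compatible, I obtain a finite zonotopal complex $\cpxToComplete'$ with $\cpxToComplete\subseteq\cpxToComplete'$ and $|\cpxToComplete'|=C$ a cube.

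Finally, from the cube $C=|\cpxToComplete'|$ I extend to all of $W$. Write $C=[-a,a]^n$ in suitable coordinates. Then $W\sdrop \operatorname{int}(C)$ decomposes into finitely many unbounded ``slab'' regions — for each facet of $C$ in direction $\pm e_i$, the region $\{w: w_i\geq a\}$, and their overlaps along lower-dimensional faces — each of which is a product of a cube (or lower cube) with orthants, hence a product of segments and rays, hence a zonotope (allowing unbounded zonotopes, i.e.\ Minkowski sums of segments and rays, which is the relevant notion here since the complex need only be locally finite in $W$, not finite). Taking the common refinement of these finitely many unbounded zonotopal pieces with the face structure of $C$, and checking it is locally finite in $W$ (each point of $W$ meets only finitely many of these cells — true since there are finitely many pieces and each piece is a single cell or has a locally finite face structure), gives the desired zonotopal completion of $\cpxToComplete'$, hence of $\cpxToComplete$.

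The main obstacle I expect is the radial/star-shaped stage: ensuring that the region swept between $\bdry\cpxToComplete$ and a large convex shell can be subdivided into \emph{zonotopes} compatibly with the given complex $\cpxToComplete$ on the inside. This is exactly the kind of statement the paper's zonotopal subdivision results are designed to handle, so the real work is in setting up the hypotheses of those results correctly — in particular controlling the edge directions so that all cells produced remain zonotopes — rather than in any new geometric idea. The unbounded stage and the polytope-to-star-shaped reduction should be routine by comparison.
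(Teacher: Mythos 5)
Your outline stalls at exactly the step you flag as the ``main obstacle,'' and that step is not merely technically delicate --- it is false as stated. The radial shell between a boundary facet $F$ of $|\cpxToComplete|$ and its dilate $\lambda F$ inside the cone over $F$ is a frustum, and frustums in general admit \emph{no} zonotopal subdivision at all: already in the plane, the region between a segment $F$ and $2F$ is a trapezoid with parallel sides of different lengths, and a classical fact about tilings (any convex polygon tiled by centrally symmetric convex polygons is itself centrally symmetric) rules out any tiling of it by zonogons, no matter how fine. So no ``zonotopal subdivision tool'' can rescue the scaling-based shells; the paper contains no such result, and its subdivision machinery works by a different mechanism (Minkowski sums with segments, i.e.\ translation along a ray, which produces prisms rather than frustums). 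Two further steps of your plan also break: intersecting a zonotopal subdivision with a large cube $C$ cuts cells near $\bdry C$ into pieces that are typically not centrally symmetric, so $\cpxToComplete'$ need not be zonotopal; and in the last stage you propose to keep unbounded ``slab'' cells as Minkowski sums of segments and rays, but the theorem asks for a zonotopal complex in the paper's sense, i.e.\ all cells are zonotopes and hence bounded --- local finiteness does not relax this. Finally, in the polytope case ``star-shaped about any interior point'' only works when $|\cpxToComplete|$ is full-dimensional; the lower-dimensional case needs a separate reduction.

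For comparison, the paper avoids scaling entirely. For a star-shaped ball around $0$ it completes $\cpxToComplete$ by the unbounded pieces $U(F)=\R_{\geq1}F$ over the boundary cells $F\subset\bdry D$ (Lemma \ref{lemma:ExtendingStarshapedComplex}); the point is that every \emph{bounded} face of $U(F)$ is a face of $F$, hence a zonotope. One then works with the class $\Pzon$ of polyhedra all of whose bounded faces are zonotopes, checks it satisfies condition $(\dagger)$ (Lemma \ref{lemma:PzonSatsDagger}), and applies Theorem \ref{thm:GeneralSubdivisions} with $\smallfan$ the zero fan: the construction subdivides each unbounded cell into Minkowski sums $Q+R$ of lower-dimensional cells $Q$ with segments $R$ along a ray interior to the recession cone, so the resulting cells are prisms/iterated sums of zonotopes with segments, all recession cones collapse to $\{0\}$, and every cell of the locally finite completion is a bounded member of $\Pzon$, i.e.\ a zonotope. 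The non-full-dimensional polytope case is reduced to the full-dimensional (hence star-shaped) case by taking the direct sum with the face complex of a cube in a complementary subspace. If you want to salvage your write-up, replace the scaling shells by this translation-along-rays mechanism and drop the intersect-with-a-cube and unbounded-slab stages.
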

See \S \ref{section:ZonotopalCompletions} for the definition of a star-shaped ball. Recall that a zonotope is a polytope which can be written as a Minkowski sum of finitely many line segments. A zonotopal complex is a polyhedral complex where every polyhedron is a zonotope, and a zonotopoal completion is a completion which is a zonotopal complex. Zonotopal decompositions of various regions have been studied both purely within the context of polyhedral geometry and combinatorics and for relations to algebraic geometry; see \cite[Lecture 7]{ZieglerPolytopes}, \cite[\S 4.2]{TilingsAndBottSamelsons}, \cite{YangBaker}, and \cite{OS}.

The paper is structured as follow. In \S \ref{sec:Preliminaries} we give our notations, conventions, and an overview of background material. Then we prove Theorems \ref{thm:LocallyFiniteCompletions}, \ref{thm:LocallyFiniteRealPolytopalCompletions}, and \ref{thm:ZonotopalCompletions} by proceeding in two parts. Let $\Phi$ be as in Theorem \ref{thm:LocallyFiniteCompletions}. First, in \S \ref{sec:LocallyFiniteSubdivisions} we establish that if we are given an extension of complexes (satisfying some conditions) $\Pi\supseteq \Phi$ then we can subdivide $\Pi$ in a way that preserves $\Phi$, restricts the possible recession cones of the polyhedron in the subdivision, and preserves local finiteness in $W(\Sigma)$. Our work on subdivisions in this section culminate in Proposition \ref{prop:MainSubdivisions}. The second part is in \S \ref{sec:LocallyFiniteCompletions} where we establish when an extension $\Pi\supseteq \Phi$, meeting the necessary requirements, exists. In this section we work in greater technical detail to ensure that our completions and subdivisions can be constructed only using polyhedra from a specific class, such as $\Gamma$-rational polyhedron or zonotopes, for example. This culminates in Theorem \ref{thm:GeneralSubdivisions} and Theorem \ref{thm:GeneralLocallyFiniteCompletions}.

\subsection*{Acknowledgments} 
The authors are grateful to Sam Payne for his encouragement on this project and for many helpful discussions and comments. They also thank Kalina Mincheva and Jeremy Usatine for helpful conversations. Netanel Friedenberg is thankful to the Mathematical Sciences Research Institute and the organizers of the Birational Geometry and Moduli Spaces program in the Spring 2019 semester during which some of the work on this project was done (Desmond Coles was not present). This paper was also supported later by NSF DMS-2001502, and NSF DMS-2053261.


\section{Preliminaries}\label{sec:Preliminaries}
We fix our notations and establish some basic facts about polyhedra, polyhedral complexes, and tropical toric varieties. We follow the same conventions as \cite{ZieglerPolytopes} and refer the reader to \cite[\S 3]{LimitOfTrops} and \cite[\S 3]{RabinoffTropicalAnalyticGeometry} for details about tropical toric varieties. 

Throughout this paper we fix a finite dimensional real vector space $W$. A polyhedron in $W$ is finite intersections of half-spaces in $W$, and a polytope is a bounded polyhedron.

\subsection{Polyhedra}
Let $W^*$ be the dual space of $W$ and let $\pairingWithDots\fncolon W^*\times W\to\R$ denote the natural pairing. Given a polyhedron $P$ in $W$ we let $\rec P$ denote its recession cone, defined as:
$$\rec P=\{v\in W \mid u+tv\in P \text{ for any $t\geq 0$ and $u\in P$} \}.$$
Write $Q\leq P$ if $Q$ is a face of $P$. We will use the following two elementary lemmas about images of polyhedra under linear maps.

\begin{lemma}\label{lemma:MapAndRecessionCommute}
Let $\ph\fncolon V\to W$ be a linear map of real vector spaces and let $P\subset V$ be a polyhedron. Then $\ph(\rec P)=\rec(\ph(P))$.
\end{lemma}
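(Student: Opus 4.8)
The plan is to prove both inclusions directly from the definition of the recession cone, using the fact that a polyhedron $P$ can be written as $P = Q + \rec P$ for a polytope $Q$ (the convex hull of its vertices, assuming $P$ is nonempty; the empty case is trivial since both sides are empty). First I would handle the inclusion $\ph(\rec P) \subseteq \rec(\ph(P))$: given $v \in \rec P$, pick any point $y \in \ph(P)$, say $y = \ph(u)$ with $u \in P$, and observe that for $t \geq 0$ we have $u + tv \in P$, so $y + t\ph(v) = \ph(u + tv) \in \ph(P)$; since also $\ph(P)$ is itself a polyhedron (the linear image of a polyhedron), this shows $\ph(v) \in \rec(\ph(P))$. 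Note one should check $\ph(P)$ is a polyhedron so that ``recession cone'' even makes sense on the right-hand side — this is the standard fact that linear images of polyhedra are polyhedra, which I would cite from \cite{ZieglerPolytopes}.

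For the reverse inclusion $\rec(\ph(P)) \subseteq \ph(\rec P)$, the cleanest route is via the decomposition $P = Q + \rec P$ with $Q$ a polytope. Applying $\ph$ gives $\ph(P) = \ph(Q) + \ph(\rec P)$, where $\ph(Q)$ is a polytope (bounded) and $\ph(\rec P)$ is a cone. A polyhedron written as (polytope) $+$ (cone) has that cone equal to its recession cone, so $\rec(\ph(P)) = \ph(\rec P)$, giving both inclusions at once. To keep the argument self-contained one may instead argue: if $w \in \rec(\ph(P))$, fix $y_0 = \ph(u_0) \in \ph(P)$; then $y_0 + tw \in \ph(P)$ for all $t \geq 0$, so there exist $u_t \in P$ with $\ph(u_t) = y_0 + tw$, i.e.\ $\ph(u_t - u_0)/t = w$ and $u_t \in P$. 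The points $(u_t - u_0)/t$ lie in $(P - u_0)/t \subseteq P - u_0 \subseteq \cone(P - u_0) = \rec P$ once $t$ is large (using that $(P-u_0)/t$ converges into $\rec P$), and one extracts that $w \in \ph(\rec P)$; the decomposition argument above makes this precise without limits.

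The main obstacle is simply making sure the right-hand side is well-posed, i.e.\ that $\ph(P)$ is a polyhedron — without this, ``$\rec(\ph(P))$'' is not a priori defined. Once that standard fact is in hand, the Minkowski decomposition $P = Q + \rec P$ reduces everything to the trivial observations that $\ph$ commutes with Minkowski sums and sends polytopes to polytopes and cones to cones, and that the recession cone of a sum of a polytope and a cone is that cone. So I expect this to be a short proof; the only care needed is to invoke the image-of-a-polyhedron-is-a-polyhedron result cleanly.
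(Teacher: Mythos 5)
Your proposal is correct and its main argument --- writing $P=Q+\rec P$ with $Q$ a polytope, applying $\ph$ to get $\ph(P)=\ph(Q)+\ph(\rec P)$, and using that the recession cone of a (polytope)$+$(cone) decomposition is the cone --- is exactly the proof given in the paper. The extra direct-inclusion arguments you sketch are unnecessary but harmless, since the decomposition already gives both inclusions at once.
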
\begin{proof}
Write $P=Q+\rec P$ with $Q$ a polytope in $V$. 
Then $\ph(P)=\ph(Q)+\ph(\rec(P))$ with $\ph(Q)$ a polytope and $\ph(\rec P)$ a cone, so $\rec(\ph(P))=\ph(\rec P)$.
\end{proof}

\begin{lemma}\label{lemma:FaceModRecessionGivesFace}
Let $P$ be a polyhedron in $W$, let $\sigma:=\rec P$ be the recession cone of $P$, and let $\tau$ be a face of $\sigma$. Let $\pi\fncolon W\to W/\vspan(\tau)$ be the canonical projection map. For any face $F$ of $P$ such that $\tau\leq\rec F$, $\pi(F)$ is a face of $\pi(P)$. If $F$ is a proper face of $P$ then $\pi(F)$ is a proper face of $\pi(P)$.
\end{lemma}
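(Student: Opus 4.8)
The plan is to use the dual/functional description of faces. Recall that a face $F$ of a polyhedron $P$ is a subset of the form $F = \{w \in P \mid \angbra{u,w} = \min_{w' \in P}\angbra{u,w'}\}$ for some $u \in W^*$ with the minimum finite; equivalently, $u$ is bounded below on $P$, which happens exactly when $\angbra{u,\cdot} \geq 0$ on $\sigma = \rec P$. So first I would fix a face $F \leq P$ with $\tau \leq \rec F$ and pick a functional $u \in W^*$ defining $F$, i.e. $F = \{w \in P \mid \angbra{u,w} = c\}$ where $c = \min_P \angbra{u,\cdot}$.

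The key observation is that $u$ descends to a functional on $W/\vspan(\tau)$: since $F$ is nonempty and $\tau \leq \rec F$, the functional $u$ is constant (equal to $c$) on $F$ and $F$ contains a translate of $\tau$, so $\angbra{u,\cdot}$ vanishes on $\vspan(\tau)$ (it is $\geq 0$ on $\tau$ because $\tau \subseteq \rec P$, and it is $\leq 0$ on $\tau$ because $w_0 + \tau \subseteq F$ for any $w_0 \in F$ forces $\angbra{u, w_0 + t} = c$ hence $\angbra{u,t} = 0$ for $t \in \tau$; linearity upgrades this from $\tau$ to $\vspan(\tau)$). Hence there is a well-defined $\bar u \in (W/\vspan\tau)^*$ with $\bar u \circ \pi = u$. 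I then claim $\pi(F) = \{y \in \pi(P) \mid \angbra{\bar u, y} = c\}$, which exhibits $\pi(F)$ as a face of $\pi(P)$ — provided $\bar u$ is bounded below on $\pi(P)$, which holds since $\min_{\pi(P)}\angbra{\bar u,\cdot} = \min_P \angbra{u, \cdot} = c$ by surjectivity of $\pi$. For the set equality: ``$\subseteq$'' is immediate since $\angbra{\bar u, \pi(w)} = \angbra{u,w} = c$ for $w \in F$; for ``$\supseteq$'', if $y = \pi(w)$ with $w \in P$ and $\angbra{\bar u, y} = c$, then $\angbra{u,w} = c$, so $w \in F$ and $y = \pi(w) \in \pi(F)$.

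For the last sentence, suppose $F$ is a proper face of $P$, so $F \neq P$ and we may choose $u$ as above with, additionally, $\angbra{u,\cdot}$ not constant on $P$ (a defining functional for a proper face takes values strictly above $c$ somewhere on $P$). Then there is $w \in P$ with $\angbra{u,w} > c$, whence $\angbra{\bar u, \pi(w)} > c$, so $\pi(w) \in \pi(P) \setminus \pi(F)$ and $\pi(F)$ is a proper face of $\pi(P)$.

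The main obstacle — really the only delicate point — is verifying that the chosen functional $u$ genuinely descends to the quotient, i.e. that $\angbra{u,\cdot}$ vanishes on all of $\vspan(\tau)$ and not merely that it is nonnegative on $\tau$; this is exactly where the hypothesis $\tau \leq \rec F$ (as opposed to just $\tau \leq \rec P$) is used, via the fact that $F$ contains a full translate of $\tau$ on which $u$ is pinned to the constant value $c$. Everything else is a routine unwinding of the functional description of faces together with surjectivity of $\pi$.
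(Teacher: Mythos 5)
Your proof is correct and follows essentially the same route as the paper's: take a functional $u$ defining $F$, observe that $\tau\leq\rec F$ forces $u$ to vanish on $\vspan(\tau)$ so that $u$ descends to $\bar u$ on $W/\vspan(\tau)$, and then check that $\pi(F)$ is exactly the minimizing set of $\bar u$ on $\pi(P)$, with the properness claim handled by a point $w\in P\setminus F$ where $u(w)>c$. The only cosmetic difference is that you verify the set equality by two direct inclusions, whereas the paper phrases it via $P\cap\pi^{-1}(F')=F$ and $F'=\pi(P\cap\pi^{-1}(F'))$; these amount to the same computation.
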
\begin{proof}
Let $u\in W^*$ be such that $F$ is the subset of $P$ on which $u$ is minimized, and let $a$ be this minimal value. Because $u$ takes a constant value on $F$ and $\tau\leq\rec F$, we have $\tau\subset u^\perp$ and so $u\in(\vspan\tau)^{\perp}$. 
Thus $u$ descends to a linear function $\tilde{u}$ on $W/\vspan\tau$, so $u=\tilde{u}\circ\pi$. 
Hence $\angbra{\tilde{u},v}\geq a$ for $v\in\pi(P)$ and $\tilde{u}$ is constant with value $a$ on $\pi(F)$. So $\pi(F)$ is contained in the face $F':=\pi(P)\cap\{v\in W/\vspan\tau\mid \angbra{\tilde{u},v}=a\}$ of $\pi(P)$. 
We also have that 
\begin{align*}
P\cap\pi^{-1}(F')&=P\cap\pi^{-1}(\pi(P))\cap\pi^{-1}(\{v\in W/\vspan\tau\mid \angbra{\tilde{u},v}=a\})\\
&=P\cap\{w\in W\mid \angbra{u,w}=a\}=F,
\end{align*}
 and because $F'\subset\pi(P)$, $F'=\pi(P\cap\pi^{-1}(F'))$, so $F'=\pi(F)$.

If $F$ is a proper face of $P$ then picking some $w\in P\sdrop F$ we have $\angbra{u,w}>a$. Then $\pi(w)\in\pi(P)$ is such that $\angbra{\tilde{u},\pi(w)}=\angbra{u,w}>a$, so $\pi(w)\in\pi(P)\sdrop\pi(F)$ and we have that $\pi(F)$ is a proper face of $\pi(P)$.
\end{proof}

\subsection{Rationality and Definablity}

For more many of our constructions we will need consider not only a vector space but a lattice inside of it (as well as other additive subgroups). In this case we use the following notation. Let $N$ be a lattice and let $M:=\Hom(N,\Z)$ be its dual lattice. Let $\Gamma$ be an additive subgroup of $\R$ and let $\Q\Gamma$ be the divisible hull of $\Gamma$. Let $N_{\Gamma}:=N\otimes_{\Z}\Gamma$, in particular $N_{\R}=N\otimes_{\Z}\R$. We will always consider $N_{\Gamma}$ as a subgroup of $N_{\R}$. A polyhedron $P$ in $N_{\R}$ is called \emph{$\Gamma$-rational} if it can be written in the form 
$$P=\{w\in N_{\R}\mid \angbra{u_i,w}\geq \gamma_i\text{ for }i=1,\ldots,m\}$$ 
with $u_1,\ldots,u_m\in M$ and $\gamma_1,\ldots,\gamma_m\in\Gamma$. 
Polyhedra which are $\{0\}$-rational are cones, and we call such cones \emph{rational}. Every face of a $\Gamma$-rational polyhedron is $\Gamma$-rational. A polyhedron $P$ in $N_{\R}$ is $\Gamma$-rational if and only if the affine hull of each face is the translation of a rational linear subspace of $N_{\R}$ by an element of $N_{\Q\Gamma}$ (\cite[A.2]{GublerGuideTrop}). It follows that the Minkowski sum of two $\Gamma$-rational polyhedra is $\Gamma$-rational.

\begin{remark}\label{Rem:VerticesClarification}
Note that any $\Gamma$-rational polyhedron has vertices in $N_{\Q\Gamma}$, however it may not have vertices in $N_{\Gamma}$. For example, let $\Gamma$ be the group generated by $\sqrt{3}$, and considered the polyderon given by the line segment between the origin in $\R$ and $\sqrt{3}/2$.
\end{remark}

Let $\subfieldOfR$ be a subfield of $\R$. A polyhedron in $N_{\R}$ is called \emph{$\subfieldOfR$-definable} if it can be written in the form 
$\{w\in N_{\R}\mid \angbra{u_i,w}\geq a_i\text{ for }i=1,\ldots,m\}$ 
with $u_1,\ldots,u_m\in M_{\subfieldOfR}$ and $a_1,\ldots,a_m\in \subfieldOfR$. Alternatively, we can characterize the $\subfieldOfR$-definable polyhedra as follows. The $\subfieldOfR$-definable polyhedra in $N_{\R}$ are those which can be written as a Minkowski sum of an $\subfieldOfR$-definable polytope and an $\subfieldOfR$-definable cone. A polytope is $\subfieldOfR$-definable if and only if it can be written as the convex hull of finitely many points in $N_{\subfieldOfR}$, and a cone $\sigma$ is $\subfieldOfR$-definable if and only if there are $w_1,\ldots,w_m\in N_{\subfieldOfR}$ such that $\sigma=\R_{\geq0}w_1+\cdots+\R_{\geq0}w_m$.

\subsection{Polyhedral complexes}

A \emph{polyhedral complex} in $W$ is a collection $\Pi$ of polyhedra in $W$ such that 
\begin{itemize}
\item if $P\in\Pi$ and $F$ is a face of $P$ then $F\in\Pi$, and
\item if $P,Q\in\Pi$ are not disjoint then $P\cap Q$ is a face of both $P$ and $Q$.
\end{itemize}
A \emph{subcomplex} of $\Pi$ is a polyhedral complex $\Pi'\subset\Pi$ or, equivalently, a subset $\Pi'\subset\Pi$ such that if $P\in\Pi'$ and $F$ is a face of $P$ then $F\in\Pi'$. 
A \emph{fan} in $W$ is a nonempty polyhedral complex consisting of pointed cones. For a cone $\sigma$ let $\Face(\sigma)$ be the fan given by the faces of $\sigma$.
A \emph{polytopal complex} is a polyhedral complex consisting of polytopes. 
Note that we do not require any of these to be finite.

The \emph{support} of a polyhedral complex $\Pi$ is the union $|\Pi|:=\cup_{P\in\Pi}P$ of all the polyhedra in $\Pi$. 
We say that $\Pi$ is \emph{complete} if $|\Pi|=W$. 
A \emph{completion} of $\Pi$ is a complete polyhedral complex containing $\Pi$ as a subcomplex.

Let $P$ be a polyhedron. A \emph{subdivision} of $P$ is a polyhedral complex $\Pi$ such that $|\Pi|=P$. More generally, if $\Pi$ and $\Pi'$ are polyhedral complexes we say that \emph{$\Pi'$ subdivides $\Pi$} if $|\Pi'|=|\Pi|$ and every polyhedron in $\Pi$ can be written as a union of polyhedra in $\Pi'$. 
A polyhedral complex $\Pi$ subdivides a polyhedron $P$ if and only if $\Pi$ is a subdivision of the polyhedral complex $\Faces(P):=\{F\mid F\leq P\}$ of all faces of $P$.

A polyhedral complex $\Pi$ in $N_{\R}$ is \emph{$\Gamma$-rational} if every polyhedron $P\in\Pi$ is $\Gamma$-rational. 
A fan in $N_{\R}$ is \emph{rational} if each of its cones is rational.

Let $\subfieldOfR$ be a subfield of $\R$. A polyhedral complex in $N_{\R}$ is called \emph{$\subfieldOfR$-definable} if all of its polyhedra are $\subfieldOfR$-definable. Note that a fan $\Delta$ in $N_{\R}\times\R_{\geq0}$ is $\subfieldOfR$-definable if and only if $\hticurrently{\Delta}{N_{\R}}{1}$ is an $\subfieldOfR$-definable polyhedral complex and $\hticurrently{\Delta}{N_{\R}}{0}$ is an $\subfieldOfR$-definable fan.

\subsection{Tropical toric varieties}\label{subsec:TropicalToricVars} Our presentation of tropical toric varieties follows \cite[\S 3]{RabinoffTropicalAnalyticGeometry} (and \cite[\S 3]{LimitOfTrops} though this source works with rational fans).

Let $\Sigma$ be a finite fan in $W$. The \emph{tropical toric variety} $W(\Sigma)$ corresponding to $\Sigma$ is a partial compactification of $W$ constructed as follows. As a set, 
$$W(\Sigma):=\dsqcup_{\sigma\in\Sigma}W/\vspan\sigma.$$ 

Consider the extended real line $\nbar{\R}=\R\cup\{\infty\}$ with the topology such that the homeomorphism $\R\to\R_{>0}$ given by $a\mapsto e^{-a}$ extends to a homeomorphism $\nbar{\R}\to\R_{\geq0}$. We equip $\nbar{\R}$ with the addition map for which $a+\infty=\infty$ for all $a\in\nbar{\R}$, so the aforementioned homeomorphism $\nbar{\R}\to\R_{\geq0}$ is also an isomorphism of topological monoids $(\nbar{\R},+)\to(\R_{\geq0},\cdot)$.

Given $\sigma\in\Sigma$, let 
$\sigma^\vee:=\{u\in W^*\mid \angbra{u,w}\geq0\text{ for all }w\in\sigma\}$ 
be the dual cone of $\sigma$. 
Let $\Hom_{\R_{\geq0}}(\sigma^\vee,\nbar{\R})$ denote the set of monoid homomorphisms $\sigma^\vee\to\nbar{\R}$ that preserve multiplication by elements of $\R_{\geq0}$. 
We can identify $W(\sigma):=\dcup_{\tau\leq\sigma}W/\vspan \tau$ with $\Hom_{\R_{\geq0}}(\sigma^\vee,\nbar{\R})$ by sending $w\in W/\vspan \tau$ to the monoid homomorphism $\ph_w\fncolon \sigma^\vee\to\nbar{\R}$ given by 
$$u\mapsto\begin{cases}
\angbra{u,w}&\text{if }u\in\tau^{\perp}\\
\infty&\text{otherwise}.
\end{cases}$$
This gives a bijection $W(\sigma)\toup{\sim}\Hom_{\R_{\geq0}}(\sigma^\vee,\nbar{\R})$, which gives $W(\sigma)$ the structure of a topological monoid. Note that for $\tau\leq\sigma$, the subspace topology on $W/\vspan\tau\subset W(\sigma)$ is the usual topology on $W/\vspan\tau$ as a finite-dimensional $\R$-vector space. For $\tau_1,\tau_2\leq\sigma$, $w_1\in W/\vspan\tau_1$, and $w_2\in W/\vspan\tau_2$, we have that $w_1+w_2$ is given by projecting each $w_i$ to $W/\vspan(\tau_1\vee\tau_2)$, where $\tau_1\vee\tau_2$ is the smallest face of $\sigma$ containing both $\tau_1$ and $\tau_2$, and then applying the usual vector space addition in $W/\vspan(\tau_1\vee\tau_2)$.

For each $\sigma\in\Sigma$ and any finite generating set $\{u_1,\ldots,u_m\}$ for $\sigma^\vee$ as a polyhedral cone, the map $W(\sigma)=\Hom_{\R_{\geq0}}(\sigma^\vee,\nbar{\R})\to\nbar{\R}^m$ defined by $\ph\mapsto(\ph(u_1),\ldots,\ph(u_m))$ gives a homeomorphism from $W(\sigma)$ to a subspace of $\nbar{\R}^m$.

If $\sigma\in\Sigma$ and $\tau$ is a face of $\sigma$, then the inclusion $W(\tau)\subset W(\sigma)$ identifies $W(\tau)$ with an open topological submonoid of $W(\sigma)$. 
The topology on $W(\Sigma)$ is given by gluing the topologies on the various $W(\sigma)$s, and the previous sentence guarantees that each $W(\sigma)\subset W(\Sigma)$ is an open subspace whose topology is the same as the original topology on $W(\sigma)$. We consider $W(\Sigma)$ as a partial compactification of $W$ by identifying $W$ with $W/\{0\}\subset W(\Sigma)$. See Figure \ref{Fig:TropicalToricVariety} for an example.
Since $W=W/\{0\}=W(\{0\})$ is a submonoid of each $W(\sigma)$ and the inclusion maps $W(\tau)\into W(\sigma)$ are monoid homomorphisms, we get that the actions of $W$ on each $W(\sigma)$, given by addition, glue to give a continuous action $W\times W(\Sigma)\to W(\Sigma)$, which we will also write as addition. 

\begin{figure}
\begin{center}\begin{tikzpicture}[scale=1]
\fill[gray!10!white] (-2,0)-- (0,0) -- (0,2) -- (-2,2) -- cycle;
\draw(-2,0)-- (0,0);
\draw (-2,0) -- (0,2);
\draw(-2,0) -- (-2,2);
\node[below] at (-1,-0.2) {$\Sigma$};

\fill[gray!10!white] (2,2) -- (3,2) -- (4,1) -- (4,0) -- (2,0) -- cycle;
\draw(2,2) -- (3,2);
\draw(3,2) -- (4,1);
\draw(4,1) -- (4,0);
\node[below] at (3,-0.2) {$W(\Sigma)$};
\end{tikzpicture}\end{center}
\caption{The shaded region depicts $W$ and the lines depict the boundary of $W$. The picture on the right is the dual complex to the figure on the left.}\label{Fig:TropicalToricVariety}
\end{figure}
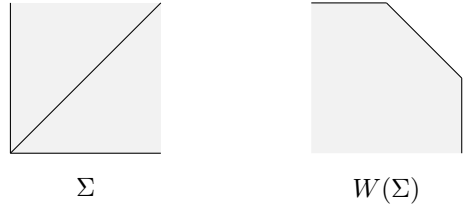

We will use the fact that tropical toric varieties are Hausdorff several times, both directly and by using \cite[Remark 3.21]{RabinoffTropicalAnalyticGeometry}, which implicitly uses that tropical toric varieties are Hausdorff. The following is well known among tropical geometers but we include the proof for completeness.

\begin{lemma}\label{lemma:TropicalToricVarsHausdorff}
Let $\Sigma$ be a finite fan in $W$. Then $W(\Sigma)$ is Hausdorff.
\end{lemma}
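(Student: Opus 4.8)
The plan is to use the standard fact that a space which is the union of a family of open Hausdorff subspaces is Hausdorff provided each pairwise intersection sits diagonally as a closed subset of the corresponding product, applied to the open cover $\{W(\sigma)\}_{\sigma\in\Sigma}$ of $W(\Sigma)$. First I would note that each $W(\sigma)$ is Hausdorff: a finite generating set $u_1,\dots,u_m$ of $\sigma^\vee$ gives a homeomorphism of $W(\sigma)$ onto a subspace of $\nbar{\R}^m$ (as recalled above), and $\nbar{\R}^m$ is Hausdorff since $\nbar{\R}\cong\R_{\geq0}$ is. For the criterion: given $p\neq q$ in $W(\Sigma)$, if they lie in a common $W(\sigma)$ we separate them there; otherwise pick $\sigma_1,\sigma_2$ with $p\in W(\sigma_1)$ and $q\in W(\sigma_2)$, and if the diagonal image of $W(\sigma_1)\cap W(\sigma_2)$ is closed in $W(\sigma_1)\times W(\sigma_2)$ then, since $(p,q)$ avoids it, there is a basic open box $A\times B\ni(p,q)$ missing the diagonal, so $A\ni p$ and $B\ni q$ are disjoint open subsets of $W(\Sigma)$. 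Thus everything reduces to showing, for all $\sigma_1,\sigma_2\in\Sigma$, that $\{(x,x)\mid x\in W(\sigma_1)\cap W(\sigma_2)\}$ is closed in $W(\sigma_1)\times W(\sigma_2)$.

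Set $\tau:=\sigma_1\cap\sigma_2$, a common face of $\sigma_1$ and $\sigma_2$ because $\Sigma$ is a fan. Using the stratified description $W(\rho)=\bigcup_{\rho'\leq\rho}W/\vspan\rho'$, together with the fact that a face of $\sigma_1$ contained in $\sigma_2$ is a face of $\tau$, one obtains $W(\sigma_1)\cap W(\sigma_2)=W(\tau)$, with $W(\tau)$ open in each $W(\sigma_i)$. Under the identifications $W(\rho)=\Hom_{\R_{\geq0}}(\rho^\vee,\nbar{\R})$, and since $\sigma_i^\vee\subseteq\tau^\vee$, the inclusion $W(\tau)\hookrightarrow W(\sigma_i)$ is restriction of monoid homomorphisms, so the diagonal $\delta\colon W(\tau)\to W(\sigma_1)\times W(\sigma_2)$ sends $\ph$ to $(\ph|_{\sigma_1^\vee},\ph|_{\sigma_2^\vee})$. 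I would then invoke the separation lemma for cones meeting in a common face, which gives $\tau^\vee=\sigma_1^\vee+\sigma_2^\vee$, and use it to identify $\delta(W(\tau))$: a pair $(\ph_1,\ph_2)$ lies in it exactly when
$$\ph_1(u_1)+\ph_2(u_2)=\ph_1(u_1')+\ph_2(u_2')$$
for every $u_1,u_1'\in\sigma_1^\vee$ and $u_2,u_2'\in\sigma_2^\vee$ with $u_1+u_2=u_1'+u_2'$. The forward implication is immediate from $\ph_i=\ph|_{\sigma_i^\vee}$; conversely such a pair determines $\ph$ on $\tau^\vee=\sigma_1^\vee+\sigma_2^\vee$ by $\ph(u_1+u_2):=\ph_1(u_1)+\ph_2(u_2)$, which is well defined precisely by those relations and is routinely checked to be an $\R_{\geq0}$-linear monoid homomorphism restricting to $\ph_1$ and $\ph_2$.

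Finally, each displayed condition cuts out a closed subset of $W(\sigma_1)\times W(\sigma_2)$: for fixed $u\in\sigma_i^\vee$ the evaluation $\ph\mapsto\ph(u)$ is continuous on $W(\sigma_i)$ (writing $u$ as an $\R_{\geq0}$-combination of the chosen generators, it is that combination of the coordinate functions of the embedding into $\nbar{\R}^m$), addition $\nbar{\R}\times\nbar{\R}\to\nbar{\R}$ is continuous, and the diagonal of $\nbar{\R}$ is closed; hence the locus where $\ph_1(u_1)+\ph_2(u_2)=\ph_1(u_1')+\ph_2(u_2')$ is the preimage of a closed set under a continuous map, and $\delta(W(\tau))$, being the intersection of these loci, is closed. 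This finishes the proof. The only non-formal ingredient is the separation lemma $\tau^\vee=\sigma_1^\vee+\sigma_2^\vee$; among the routine steps I expect the identification of $\delta(W(\tau))$ as that intersection of closed loci to be the crux, since it is exactly where the separatedness of the toric gluing is encoded and where one must pass carefully between the stratum and the monoid-homomorphism descriptions.
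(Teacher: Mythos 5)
Your proof is correct and follows essentially the same route as the paper: Hausdorffness of each chart $W(\sigma)$ via the embedding into $\nbar{\R}^m$, reduction to closedness of the diagonal image of $W(\tau)=W(\sigma_1)\cap W(\sigma_2)$, the separation lemma $\tau^\vee=\sigma_1^\vee+\sigma_2^\vee$, and expressing the image as an intersection of closed equality loci. The paper merely packages your final step abstractly (the image of $\Hom_{\calR}(\calT,\calM)\to\Hom_{\calR}(\calS,\calM)$ induced by a surjection $\calS\to\calT$ is closed when $\calM$ is Hausdorff, applied to the addition map $\sigma_1^\vee\times\sigma_2^\vee\to\tau^\vee$), which is exactly your well-definedness conditions made concrete.
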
\begin{proof}
We show that $W(\Sigma)$ is Hausdorff by tropicalizing the proof, as in \cite[Theorem 3.1.5]{CLS}, that the toric variety associated to a rational fan is separated. First, recall that, for any $\sigma\in\Sigma$, $W(\sigma)$ is Hausdorff because it is homeomorphic to $\Hom_{\R_{\geq0}}(\sigma^\vee,\nbar{\R})$ which is a subspace of the Hausdorff space $\nbar{\R}^{\left(\sigma^\vee\right)}$. 
So it suffices to show that, for any two cones $\sigma_1,\sigma_2\in\Sigma$ with intersection $\tau$, the diagonal map 
$$\Hom_{\R_{\geq0}}(\tau^\vee,\nbar{\R})=W(\tau)\to W(\sigma_1)\times W(\sigma_2)=\Hom_{\R_{\geq0}}(\sigma_1^\vee,\nbar{\R})\times\Hom_{\R_{\geq0}}(\sigma_2^\vee,\nbar{\R})$$
 has closed image. 
This diagonal map is induced by the inclusions $\sigma_1^\vee,\sigma_2^\vee\subset\tau^\vee$, and if we identify 
$\Hom_{\R_{\geq0}}(\sigma_1^\vee,\nbar{\R})\times\Hom_{\R_{\geq0}}(\sigma_2^\vee,\nbar{\R})$ 
with 
$\Hom_{\R_{\geq0}}(\sigma_1^\vee\times\sigma_2^\vee,\nbar{\R})$, 
then the diagonal map $\Hom_{\R_{\geq0}}(\tau^\vee,\nbar{\R})\to\Hom_{\R_{\geq0}}(\sigma_1^\vee\times\sigma_2^\vee,\nbar{\R})$ is induced by the addition map $\sigma_1^\vee\times\sigma_2^\vee\to\tau^\vee$. 
That this map has closed image follows because $\sigma_1^\vee+\sigma_2^\vee=\tau^\vee$ and in general, if $\psi\fncolon\calS\to\calT$ is a surjective homomorphism of semimodules over a semiring $\calR$ and $\calM$ is a Hausdorff topological $\calR$-semimodule, then the image of the induced map $\Hom_{\calR}(\calT,\calM)\to\Hom_{\calR}(\calS,\calM)$ is closed. To see this last fact, note that the image of this map is the intersection of the sets 
$$\{\ph\in\Hom_{\calR}(\calS,\calM)\mid \ph(s_1)=\ph(s_2)\}$$ 
over all pairs $s_1,s_2\in\calS$ such that $\psi(s_1)=\psi(s_2)$. Because $\calM$ is Hausdorff, each of the sets $\{\ph\in\Hom_{\calR}(\calS,\calM)\mid \ph(s_1)=\ph(s_2)\}$ is closed.
\end{proof}

Lastly, recall that a collection $\calA$ of subsets of a topological space $X$ is called \emph{locally finite in $X$} if every point of $X$ has a neighborhood which meets only finitely many of the sets $A\in\calA$. Note that $\calA$ is locally finite in $X$ if and only if the family $\{\nbar{A}\mid A\in\calA\}$ of the closures of the sets in $\calA$ is locally finite in $X$. 
If $\calA$ is locally finite in $X$ and $\calA'$ is locally finite in $X'$, then the product family $\calA\times\calA':=\{A\times A'\mid A\in\calA,A'\in\calA'\}$ is locally finite in $X\times X'$. If $Y$ is a closed subspace of $X$ which contains $\cup_{A\in\calA}A$, then $\calA$ is locally finite in $X$ if and only if it is locally finite in $Y$.


\section{Locally finite subdivisions}\label{sec:LocallyFiniteSubdivisions}

In this section we develop the subdivision technique used to prove our main theorems. We give a simple example of this technique in  Example \ref{example:One}; see Example \ref{example:Three} for an example the illustrates a more intricate case. 
In \S \ref{subsec:SubdividingPolyhedron} we show that this technique gives a locally finite subdivision of a single polyhedron in $W$. In \S \ref{subsec:LocallyFiniteInExtension} we show that with the same process we can get subdivisions of the polyhedron that are locally finite in $W(\Sigma)$ for $\Sigma$ a finite fan which is suitably compatible with the polyhedron. Readers only interested in local finiteness in $W$ may skip \S \ref{subsec:LocallyFiniteInExtension} and substitute references to Proposition \ref{prop:LocallyFiniteSubdivideExtendedPolyhedron} with Proposition \ref{prop:LocallyFiniteSubdividePolyhedron}. Then in \S \ref{subsec:SubdividingComplex} we show that we can piece together these subdivisions of individual polyhedra to get a subdivision of an appropriate polyhedral complex.

\begin{example}\label{example:One}
Let $N=\Z^2$ and consider the polyhedra 
\begin{align*}
P_1&=\{(x,y)\in N_{\R}\mid x\geq0,y\geq0\},\\
P_2&=\{(x,y)\in N_{\R}\mid x\leq0,y\geq0\},\\
P_3&=\{(x,y)\in N_{\R}\mid x\leq0,y\leq0\},\text{ and}\\
P_4&=\{(x,y)\in N_{\R}\mid x\geq0,y\leq0\}.
\end{align*}
Let $\smallcpx$ be the complex with maximal cells $P_i$ for $i\in\{2,3,4\}$, and let $\bigcpx$ be the complex with maximal cells $P_i$ for $i\in\{1,2,3,4\}$. The complexes $\smallcpx$ and $\bigcpx$ are illustrated below. 
\begin{center}\begin{tikzpicture}[scale=1]
\fill[gray!10!white] (0,0) -- (0,2) -- (-2,2) -- (-2,0) -- cycle;
\fill[gray!10!white] (0,0) -- (0,-2) -- (-2,-2) -- (-2,0) -- cycle;
\fill[gray!10!white] (0,0) -- (0,-2) -- (2,-2) -- (2,0) -- cycle;
\draw(0,0) -- (0,2);
\draw(0,0) -- (0,-2);
\draw(0,0) -- (2,0);
\draw(0,0) -- (-2,0);
\node[below] at (0,-2.2) {$\smallcpx$};
\fill[gray!10!white] (6,0) -- (6,2) -- (8,2) -- (8,0) -- cycle;
\fill[gray!10!white] (6,0) -- (6,2) -- (4,2) -- (4,0) -- cycle;
\fill[gray!10!white] (6,0) -- (6,-2) -- (4,-2) -- (4,0) -- cycle;
\fill[gray!10!white] (6,0) -- (6,-2) -- (8,-2) -- (8,0) -- cycle;
\draw(6,0) -- (6,2);
\draw(6,0) -- (6,-2);
\draw(6,0) -- (8,0);
\draw(6,0) -- (4,0);
\node[below] at (6,-2.2) {$\bigcpx$};
\end{tikzpicture}\end{center}
For $i=1,\ldots,4$ let $\sigma_i$ be the recession cone of $P_i$; so as sets we have $\sigma_i=P_i$. Let $\smallfan$ be the fan whose maximal cones are $\sigma_2,\sigma_3,\sigma_4$. 
Intuitively, we want to specify $\widetilde{\bigcpx}$ by drawing the following picture.
\begin{center}\begin{tikzpicture}[scale=1]
\fill[gray!10!white] (0,0) -- (0,2) -- (6/4,2) -- (6/4,6/4) -- (2,6/4) -- (2,0) -- cycle;
\fill[gray!10!white] (0,0) -- (0,2) -- (-2,2) -- (-2,0) -- cycle;
\fill[gray!10!white] (0,0) -- (0,-2) -- (-2,-2) -- (-2,0) -- cycle;
\fill[gray!10!white] (0,0) -- (0,-2) -- (2,-2) -- (2,0) -- cycle;
\draw (0,0) -- (0,2);
\draw (0,0) -- (0,-2);
\draw (0,0) -- (2,0);
\draw (0,0) -- (-2,0);
\foreach \x in {1,...,6}
	\draw(2,\x/4) -- (\x/4,\x/4) -- (\x/4,2);
\draw(0,0) -- (6/4,6/4);
\foreach \x in {1,...,3}
	\filldraw (6/4+2*\x/16,6/4+2*\x/16) circle (0.5pt);
\node[below] at (0,-2.2) {$\widetilde{\bigcpx}$};
\end{tikzpicture}\end{center}
We describe this subdivision as follows. Pick a rational ray $\rho$ in $N_{\R}$ which meets the relative interior of $\sigma_1$. Pick a nonzero point $\gamma$ on $\rho$ and let $T$ be the polyhedral complex obtained by subdividing $\rho$ at each of the non-negative integer multiples of $\gamma$. Let $\calB$ be the boundary complex of $P_1$, i.e., the set of all proper faces of $P_1$. Then the picture we have drawn above represents the polyhedral complex $\widetilde{\bigcpx}:=\smallcpx\cup\{Q+R\mid Q\in\calB,R\in T\}$. 
\end{example}

\subsection{Subdividing a polyhedron}\label{subsec:SubdividingPolyhedron}

The following two lemmas will allow us to construct subdivisions of a polyhedron using its boundary and a ray interior to the recession cone.

\begin{lemma}\label{lemma:RayBoundaryDecomp}
Let $P$ be an unbounded, pointed polyhedron in $W$. Fix a ray $\rho$ that meets the relative interior of $\rec P$. Let $B$ be the union of the faces of $P$ whose recession cones are proper faces of $\rec P$. Then any point in $P$ can be written uniquely as the sum of a point in $\rho$ and a point in $B$.
\end{lemma}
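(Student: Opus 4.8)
The plan is to use the Minkowski--Weyl structure of $P$. Since $P$ is pointed and unbounded, write $P = Q + \rec P$ where $Q$ is a polytope (the convex hull of the vertices of $P$). Pick the vertex decomposition so that $Q$ is minimal; concretely, $Q$ can be taken to be $\conv(\{\text{vertices of }P\})$. The ray $\rho$ meets the relative interior of the pointed cone $\sigma := \rec P$, so after scaling we may fix a generator $v_\rho$ of $\rho$ lying in $\operatorname{relint}\sigma$. The key elementary fact I would isolate first: since $v_\rho \in \operatorname{relint}\sigma$, for every $w \in \sigma$ there is a \emph{largest} $t \ge 0$ with $w - t v_\rho \in \sigma$, and $w - t v_\rho$ then lies in a proper face of $\sigma$; moreover this $t$ is unique. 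This is the cone version of the statement, and it follows from the fact that the function $w \mapsto \sup\{t \ge 0 : w - tv_\rho \in \sigma\}$ is finite (because $\sigma$ is pointed, so it contains no line, and $v_\rho$ is interior) and that the supremum is attained and pushes the point to the relative boundary.

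Next I would promote this to $P$ itself. Given $x \in P$, write $x = q + w$ with $q \in Q$, $w \in \sigma$; apply the cone fact to get $w = t v_\rho + w'$ with $t \ge 0$ maximal and $w'$ in a proper face $\tau$ of $\sigma$. Then $x = t v_\rho + (q + w')$, and I must check that $q + w'$ lies in $B$, i.e.\ in a face of $P$ whose recession cone is a proper face of $\sigma$. The face of $P$ containing $q + w'$ in its relative interior has recession cone equal to a face of $\sigma$; I would argue that this recession cone cannot be all of $\sigma$, precisely because $w'$ was pushed to the relative boundary of $\sigma$ (if the recession cone were $\sigma$, one could move further in the $-v_\rho$ direction and stay in $P$, contradicting maximality of $t$ — here one uses that $B$'s complement in $\partial P$ together with $\operatorname{relint} P$ is exactly the union of faces with recession cone $\sigma$). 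This gives existence of the decomposition $x = (t v_\rho) + (q+w')$ with $t v_\rho \in \rho$ and $q + w' \in B$.

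For uniqueness, suppose $x = s v_\rho + b = s' v_\rho + b'$ with $s, s' \ge 0$ and $b, b' \in B$; WLOG $s' \ge s$, so $b = (s'-s)v_\rho + b'$. I want $s = s'$. Let $F$ be the minimal face of $P$ containing $b$, so $\rec F$ is a proper face $\tau \lneq \sigma$, and let $F'$ be the minimal face containing $b'$, with $\rec F' = \tau' \lneq \sigma$. Using Lemma~\ref{lemma:FaceModRecessionGivesFace} (projecting modulo $\vspan$ of an appropriate face) one reduces to showing that no point of $B$ can be obtained from another point of $B$ by a positive translation along $v_\rho$: if $b' \in B$ and $b' + c v_\rho \in P$ for some $c > 0$, then the segment $[b', b'+cv_\rho]$ has recession direction... more carefully, the point $b$ lies in some face $F$ with $\rec F = \tau$ a proper face of $\sigma$; since $v_\rho \in \operatorname{relint}\sigma \not\subset \vspan\tau$ (as $\tau$ is a proper face of the pointed cone $\sigma$ and $v_\rho$ is interior), translating $b'$ by $cv_\rho$ for $c>0$ cannot land in the same or another proper-recession-cone face — the direction $v_\rho$ points strictly into $\operatorname{relint}\sigma$, so $b' + cv_\rho$ lies in the relative interior of $P$ (or in a face whose recession cone is $\sigma$), hence not in $B$. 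That forces $s = s'$ and then $b = b'$.

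The main obstacle I anticipate is the bookkeeping in the last two paragraphs: carefully verifying that ``pushed to the relative boundary of $\sigma$'' translates into ``lies in a face of $P$ with proper recession cone,'' and that $v_\rho$ being interior genuinely prevents a point of $B$ from being translated by $v_\rho$ into $B$ again. Both hinge on the single geometric input that $v_\rho \in \operatorname{relint}(\rec P)$ is not contained in the span of any proper face of $\rec P$, combined with Lemma~\ref{lemma:FaceModRecessionGivesFace} to handle faces cleanly; once that is set up, existence and uniqueness both fall out.
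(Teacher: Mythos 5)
Your uniqueness sketch is close to the paper's argument: the key claim is that a point of $B$ translated by a strictly positive multiple of $v_\rho$ never lies in $B$ again, and the clean justification (which you gesture at but do not give) is that any face $Q$ of $P$ containing $b'+cv_\rho$ meets the ray $b'+\rho\subset P$ in a face of that ray containing a relative-interior point, hence contains all of $b'+\rho$, so $\rec Q\supseteq\rho$ and therefore $\rec Q=\rec P$ because $\rho$ meets the relative interior of $\rec P$. The genuine gap is in your existence step. The maximality of $t$ is taken with respect to one chosen Minkowski decomposition $x=q+w$, and that decomposition is not unique; pushing only the cone component $w$ to the relative boundary of $\sigma$ does not push $x$ into $B$, and the ``contradiction with maximality of $t$'' is not a contradiction: $q+w'-\epsilon v_\rho$ can perfectly well lie in $P$ after re-choosing the polytope part, even though $w'-\epsilon v_\rho\notin\sigma$.

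Concretely, let $P=\conv\{(0,0),(1,0)\}+\cone\{(0,1),(1,1)\}=\{(a,b)\mid a\geq0,\ b\geq0,\ a-b\leq1\}$ in $\R^2$, so $\sigma=\rec P=\{(a,b)\mid 0\leq a\leq b\}$, and let $\rho=\R_{\geq0}(1,2)$, which meets the interior of $\sigma$. Take $x=(1,2)$ with the decomposition $q=(1,0)\in Q$, $w=(0,2)\in\sigma$. The largest $t\geq0$ with $w-t(1,2)\in\sigma$ is $t=0$, so your recipe outputs $x=0\cdot v_\rho+(1,2)$; but $(1,2)$ lies in the interior of $P$, hence not in $B$, and the actual unique decomposition is $x=(0,0)+1\cdot(1,2)$. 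The repair is to run the ``move backwards along $v_\rho$'' argument on $x$ inside $P$ rather than on $w$ inside $\sigma$: since $v_\rho\in\rec P$ and $P$ is pointed, $\{t\in\R\mid x+tv_\rho\in P\}=[t_0,\infty)$ for some $t_0\leq0$, and since $\{x+t_0v_\rho\}$ is a face of $(x+\R v_\rho)\cap P$, it equals $(x+\R v_\rho)\cap Q$ for some face $Q$ of $P$ whose recession cone cannot contain $\rho$ and hence is a proper face of $\rec P$, giving $x+t_0v_\rho\in B$. This is exactly the paper's proof, so your approach needs this change of where the maximality is imposed rather than just more bookkeeping.
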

\begin{proof}
Let $w\in P$. Fix a nonzero point $v\in\rho$. Consider the function $\ph\fncolon\R\to W$ given by $t\mapsto w+tv$. Because $v$ is in $\rec P$, $\ph(t)\in P$ for $t\geq0$. On the other hand, because $P$ is a pointed polyhedron, $\ph^{-1}(P)$ is a polyhedron but not all of $\R$. Thus $\ph^{-1}(P)=[t_0,\infty)$ for some $t_0\leq0$.We have that $\{\ph(t_0)\}$ is a face of the polyhedron $\ph(\ph^{-1}(P))=\ph(\R)\cap P$. Since a face of an intersection of two polyhedra must be an intersection of faces of the polyhedra, there is a face $Q$ of $P$ such that $\{\ph(t_0)\}=\ph(\R)\cap Q$. The recession cone of $Q$ must be a face of $\rec P$ but does not contain $\rho$ because $Q$ contains $\ph(t_0)$ but no other point of $\ph(t_0)+\rho=\ph(\ph^{-1}(P))$. Since $\rho$ meets the relative interior of $\rec P$ this exactly says that the recession cone of $Q$ is a proper face of $P$. So $\ph(t_0)\in B$. Thus $w=\ph(t_0)+(-t_0v)$ is the desired representation.

To show uniqueness, suppose $z_1+v_1=z_2+v_2$ with $z_1,z_2\in B$ and $v_1,v_2\in\rho$. We may assume without loss of generality that $v_1-v_2\in\rho$. If $v_1=v_2$ then $z_1=z_2$ and we are done, so assume $v_1\neq v_2$. Thus $v_1-v_2\in\rho\sdrop\{0\}$, so $z_2=z_1+v_1-v_2$ is in the relative interior of the polyhedron $z_1+\rho$. Now if $Q$ is a face of $P$ containing $z_2$ then because $z_1+\rho\subset P$, $Q\cap(z_1+\rho)$ must be a face of $z_1+\rho$ containing $z_2$, and so must be all of $z_1+\rho$. Thus any such $Q$ contains $z_1+\rho$ which implies that the recession cone of $Q$ must contain $\rho$ and therefore be all of $\rec P$. This contradicts the assumption that $z_2\in B$.
\end{proof}

\begin{lemma}\label{lemma:ProjectionsOfSumAffine}
Let $F\subset W$ be a polyhedron and $\rho\subset W$ be a cone such that the addition map $\rho \times F \rightarrow F+P$ is a bijection. Let $\ph\fncolon F+\rho\to F$ and $\psi\fncolon F+\rho\to\rho$ be the maps such that $w=\ph(w)+\psi(w)$ for all $w\in F+\rho$. Then $\ph$ and $\psi$ are affine maps. In particular, $\ph$ and $\psi$ are continuous.
\end{lemma}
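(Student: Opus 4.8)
The plan is to exhibit $\ph$ and $\psi$ as affine shifts of the two components of the inverse of a single linear isomorphism. Let $a\fncolon W\times W\to W$ be the addition map $(x,y)\mapsto x+y$, which is linear, and observe that the hypothesis says precisely that $a$ restricts to a bijection $F\times\rho\to F+\rho$. Let $L\subseteq W$ be the linear subspace parallel to $\aff(F)$ and put $V:=\vspan(\rho)$ (equivalently $\aff(\rho)$, since $0\in\rho$). The crux of the argument is to show that $a$ is \emph{injective on the linear subspace $L\times V\subseteq W\times W$}. Granting this, $a$ restricts to a linear isomorphism $L\times V\toup{\sim}U$ with $U:=L+V$; I would then fix base points $x_0\in F$ and $v_0\in\rho$ and let $b=(b_1,b_2)\fncolon U\to L\times V$ be the inverse of this isomorphism. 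For any $w\in F+\rho$ we have $w-x_0-v_0=(\ph(w)-x_0)+(\psi(w)-v_0)\in L+V=U$, and since $(\ph(w)-x_0,\psi(w)-v_0)$ lies in $L\times V$ and maps under $a$ to $w-x_0-v_0$, injectivity of $a$ on $L\times V$ forces $b(w-x_0-v_0)=(\ph(w)-x_0,\psi(w)-v_0)$. Hence $\ph(w)=x_0+b_1(w-x_0-v_0)$ and $\psi(w)=v_0+b_2(w-x_0-v_0)$, which are affine in $w$; and affine maps between finite-dimensional real vector spaces are continuous, which gives the last sentence.

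To prove the injectivity claim, I would argue by contradiction-free computation: suppose $a(\ell,v)=\ell+v=0$ with $\ell\in L$ and $v\in V$, so $v=-\ell$. Pick $x_0$ in the relative interior of $F$ and $v_0$ in the relative interior of $\rho$. Since these relative interiors are open in the affine hulls $x_0+L$ and $V$ respectively, there is a small $t>0$ with $x_0+t\ell\in F$ and simultaneously $v_0-t\ell=v_0+tv\in\rho$. Then the points $(x_0+t\ell,\,v_0-t\ell)$ and $(x_0,v_0)$ of $F\times\rho$ have the same image $x_0+v_0$ under $a$, so by injectivity of $a|_{F\times\rho}$ they are equal; this forces $\ell=0$ and hence $v=0$, proving the claim.

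I expect this injectivity step to be the main (indeed essentially only) obstacle, and the one point that needs care is that $F$ and $\rho$ may fail to be full-dimensional — which is exactly why the argument should be phrased with relative interiors rather than interiors. The degenerate cases ($F$ a single point, or $\rho=\{0\}$) are then handled automatically, since one of $L$, $V$ is zero and the claim is trivial. Everything after the injectivity claim is formal linear algebra, so no further difficulty is anticipated.
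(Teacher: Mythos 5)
Your proposal is correct and takes essentially the same approach as the paper: reduce to the linear-algebra fact that the direction space of $\aff(F)$ and $\vspan\rho$ form a direct sum, and then identify $\ph$ and $\psi$ as (translates of) the two linear projections. The paper asserts the direct-sum fact without argument after a WLOG translation of $F$ to contain $0$ in its relative interior; you instead keep explicit base points and supply a clean relative-interior proof of the injectivity of $a|_{L\times V}$, which is the step the paper leaves to the reader.
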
\begin{proof}
Without loss of generality we can assume that $0$ is in the relative interior of $F$. In this case we see that the sum $\vspan F$ and $\vspan \rho$ is a direct sum. We thus have that the maps $\ph$ and $\psi$ are restrictions of the projection maps $(\vspan F)\oplus(\vspan \rho)\to\vspan F$ and $(\vspan F)\oplus(\vspan \rho)\to\vspan \rho$, respectively.
\end{proof}

We can now give a subdivision of a polyhedron by subdividing a ray intersecting the relative interior of the recession cone.

\begin{prop}\label{prop:SubdivideOnePolyhedron}
Let $P$ be an unbounded, pointed polyhedron in $W$. Fix a ray $\rho$ that meets the relative interior of $\rec P$ and let $B$ be the union of those faces of $P$ whose recession cones are proper faces of $\rec P$. Let $T$ be any polyhedral subdivision of $\rho$ and let $\Pi$ be a polyhedral complex with support $B$. Then $\Pi':=\{Q+R\mid Q\in\Pi, R\in T\}$ is a polyhedral subdivision of $P$.
\end{prop}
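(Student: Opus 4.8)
The plan is to combine the decomposition from Lemma~\ref{lemma:RayBoundaryDecomp} with the affineness of the projection maps from Lemma~\ref{lemma:ProjectionsOfSumAffine}, and then verify the two axioms of a polyhedral complex together with the support condition. First I would observe that each $Q+R$ with $Q\in\Pi$, $R\in T$ is a polyhedron (Minkowski sum of polyhedra), so $\Pi'$ is a collection of polyhedra. Since $|\Pi|=B$ and $|T|=\rho$, and since Lemma~\ref{lemma:RayBoundaryDecomp} says the addition map $\rho\times B\to P$ is a bijection, we get $\bigcup_{Q,R}(Q+R)=B+\rho=P$ as sets, so $|\Pi'|=P$; this is the support condition. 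The harder work is the two complex axioms.

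For the face condition, I would first record the key structural fact: by Lemma~\ref{lemma:ProjectionsOfSumAffine} the decomposition map $P\to B\times\rho$, $w\mapsto(\ph(w),\psi(w))$, is a bijection that is affine in each coordinate; since $B$ spans a complement to $\vspan\rho$ (after translating $0$ into the relative interior of an appropriate face), the faces of a sum $Q+R$ are exactly the sums $Q'+R'$ where $Q'\leq Q$ and $R'\leq R$. Concretely, a linear functional $u$ minimized on $Q+R$ must be minimized separately on $Q$ and on $R$ (because $\psi,\ph$ respect the direct-sum structure), and the minimizing face is $(\text{face of }Q)+(\text{face of }R)$. Hence if $Q\in\Pi$ and $R\in T$, every face of $Q+R$ is of the form $Q'+R'$ with $Q'\leq Q$ a face in $\Pi$ (since $\Pi$ is a complex) and $R'\leq R$ a face in $T$ (since $T$ is a complex), so it lies in $\Pi'$.

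For the intersection condition, take $Q_1+R_1$ and $Q_2+R_2$ in $\Pi'$. Using the bijection $w\mapsto(\ph(w),\psi(w))$, a point lies in $Q_i+R_i$ iff $\ph(w)\in Q_i$ and $\psi(w)\in R_i$; therefore
\[
(Q_1+R_1)\cap(Q_2+R_2)=\ph^{-1}(Q_1\cap Q_2)\cap\psi^{-1}(R_1\cap R_2)=(Q_1\cap Q_2)+(R_1\cap R_2).
\]
If this intersection is nonempty, then $Q_1\cap Q_2$ and $R_1\cap R_2$ are both nonempty, hence (as $\Pi$ and $T$ are complexes) $Q_1\cap Q_2$ is a common face of $Q_1,Q_2$ and $R_1\cap R_2$ is a common face of $R_1,R_2$. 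By the face description from the previous paragraph, $(Q_1\cap Q_2)+(R_1\cap R_2)$ is then a face of $Q_1+R_1$ and of $Q_2+R_2$. This gives the intersection axiom and completes the proof.

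\textbf{Main obstacle.} The one point requiring genuine care is the claim that faces of $Q+R$ are precisely sums of faces of $Q$ and $R$, and the compatibility of this with the bijection of Lemma~\ref{lemma:RayBoundaryDecomp} when $Q$ and $R$ do not themselves lie in complementary subspaces (only $B$ and $\rho$ do globally). I would handle this by working face-by-face: each $Q\in\Pi$ lies in some maximal face $F$ of $B$, $\rho$ spans a line complementary to $\vspan F$ inside $\vspan(F+\rho)$, so within the affine span of $F+\rho$ the sum $Q+R$ genuinely splits as a direct product, and the face structure of a product of polyhedra is the product of the face structures. Everything else is routine bookkeeping with the continuous affine maps $\ph,\psi$.
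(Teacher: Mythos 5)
Your proposal is correct and follows essentially the same route as the paper: the intersection formula $(Q_1+R_1)\cap(Q_2+R_2)=(Q_1\cap Q_2)+(R_1\cap R_2)$ comes from the uniqueness statement of Lemma~\ref{lemma:RayBoundaryDecomp}, and the identification of the faces of $Q+R$ with sums of faces of $Q$ and $R$ comes from the affine splitting of Lemma~\ref{lemma:ProjectionsOfSumAffine}, exactly as in the paper's proof (which phrases the splitting as the addition map $Q\times R\to Q+R$ being an affine isomorphism, so that faces of $Q+R$ are products of faces). Your extra care in the ``main obstacle'' paragraph—translating so the relevant affine spans split—is precisely how the paper handles it inside Lemma~\ref{lemma:ProjectionsOfSumAffine}.
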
\begin{proof}
Say $Q\in\Pi$ and $R\in T$. Lemma \ref{lemma:ProjectionsOfSumAffine} implies that the addition map $Q\times R\to Q+R$ is an affine isomorphism of polyhedra. Since faces of $Q\times R$ are products of a face of $Q$ and a face of $R$, the addition map $Q\times R\to Q+R$ being an affine isomorphism gives us that the faces of $Q+R$ are exactly those sets of the form $E+F$ with $E\leq Q$ and $F\leq R$. In particular, every face of $Q+R$ is also in $\Pi'$.

Say $Q_1,Q_2\in\Pi$ and $R_1,R_2\in T$ are such that $Q_1+R_1$ and $Q_2+R_2$ are not disjoint. 
The uniqueness part of Lemma \ref{lemma:RayBoundaryDecomp} shows that 
$$(Q_1+R_1)\cap(Q_2+R_2)=(Q_1\cap Q_2)+(R_1\cap R_2),$$ 
so $Q_1\cap Q_2\neq\emptyset$ and $R_1\cap R_2\neq\emptyset$. 
Since $Q_1\cap Q_2$ is a face of both $Q_1$ and $Q_2$, and $R_1\cap R_2$ is a face of both $R_1$ and $R_2$, the previous paragraph gives us that $(Q_1\cap Q_2)+(R_1\cap R_2)$ is a face of both $Q_1+R_1$ and $Q_2+R_2$. Thus $\Pi'$ is a polyhedral complex.
\end{proof}

We can also use Lemma \ref{lemma:ProjectionsOfSumAffine} to show that the construction used in Proposition \ref{prop:SubdivideOnePolyhedron} can be used to build subdivisions of $P$ which are locally finite in $W$.

\begin{prop}\label{prop:LocallyFiniteSubdividePolyhedron}
With the same setup as in Proposition \ref{prop:SubdivideOnePolyhedron}, if $T$ and $\Pi$ are both locally finite in $W$, then $\Pi'$ is locally finite in $W$ as well.
\end{prop}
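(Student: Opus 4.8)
The plan is to prove the apparently stronger statement that $\Pi'$ is locally finite in $P$; since $P=|\Pi'|$ is a polyhedron, hence closed in $W$, and contains every member of $\Pi'$, the equivalence recalled at the end of Section~\ref{sec:Preliminaries} (local finiteness in a closed subspace containing the union versus in the ambient space) will then give local finiteness in $W$. The whole argument runs through the decomposition from Lemma~\ref{lemma:RayBoundaryDecomp}: every $w\in P$ has a \emph{unique} expression $w=\varphi(w)+\psi(w)$ with $\varphi(w)\in B$ and $\psi(w)\in\rho$, and I will treat $\varphi$ and $\psi$ as functions $P\to W$.

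First I would establish that $\varphi$ and $\psi$ are continuous. Here $B$ is the union of the finitely many faces $F_1,\dots,F_k$ of $P$ whose recession cones are proper faces of $\rec P$, so $P=B+\rho=\bigcup_i(F_i+\rho)$ is a finite union of polyhedra, hence of closed sets. On each $F_i+\rho$ the addition map $\rho\times F_i\to F_i+\rho$ is a bijection --- surjectivity is clear and injectivity is the uniqueness clause of Lemma~\ref{lemma:RayBoundaryDecomp}, valid since $F_i+\rho\subseteq P$ --- so Lemma~\ref{lemma:ProjectionsOfSumAffine} shows the two coordinate projections $F_i+\rho\to F_i$ and $F_i+\rho\to\rho$ are affine, in particular continuous. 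By uniqueness of the decomposition these projections are exactly the restrictions of $\varphi$ and $\psi$, so they agree on the overlaps $(F_i+\rho)\cap(F_j+\rho)$; the gluing lemma for finite closed covers then gives continuity of $\varphi$ and $\psi$ on all of $P$.

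With continuity in hand the local finiteness is a pull-back-a-neighborhood argument. Given $w\in P$, set $b=\varphi(w)$ and $r=\psi(w)$; using local finiteness of $\Pi$ choose an open $U_B\ni b$ in $W$ meeting only finitely many $Q\in\Pi$, and using local finiteness of $T$ choose an open $U_\rho\ni r$ in $W$ meeting only finitely many $R\in T$. Then $U:=\varphi^{-1}(U_B)\cap\psi^{-1}(U_\rho)$ is an open neighborhood of $w$ in $P$, and if $z\in U\cap(Q+R)$ then writing $z=q+r'$ with $q\in Q\subseteq B$, $r'\in R\subseteq\rho$, uniqueness forces $\varphi(z)=q$ and $\psi(z)=r'$, so $q\in Q\cap U_B$ and $r'\in R\cap U_\rho$. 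Hence the pairs $(Q,R)$ with $(Q+R)\cap U\neq\emptyset$ form a subset of $\{Q\in\Pi\mid Q\cap U_B\neq\emptyset\}\times\{R\in T\mid R\cap U_\rho\neq\emptyset\}$, which is finite.

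I expect the continuity of $\varphi$ and $\psi$ to be the only real point of friction: since $B$ is a union of faces and not itself a polyhedron, Lemma~\ref{lemma:ProjectionsOfSumAffine} cannot be applied directly, and one has to break $B$ into its finitely many faces and glue along a finite closed cover. Everything after that is routine.
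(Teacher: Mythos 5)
Your proof is correct and is essentially the paper's approach seen from the dual side: the paper asserts that the addition map $+\colon \rho\times B\to P$ is a homeomorphism, then transfers local finiteness of the product family $T\times\Pi$ through it, whereas you work directly with the coordinate maps $\varphi$ and $\psi$ (which are the two components of the inverse of that addition map) and do the neighborhood pullback by hand. These are two ways of phrasing the same core argument, and both end with the observation that $P$ is closed in $W$.

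Where your write-up genuinely adds something is the continuity step. The paper's proof simply says that the homeomorphism ``follows from Lemma~\ref{lemma:RayBoundaryDecomp},'' but that lemma gives only bijectivity; continuity of the inverse must come from Lemma~\ref{lemma:ProjectionsOfSumAffine}, and that lemma is stated for $\rho\times F\to F+\rho$ with $F$ a single polyhedron. Since $B$ is a finite union of faces and typically not itself a polyhedron, one cannot invoke Lemma~\ref{lemma:ProjectionsOfSumAffine} directly on $B$; your move of applying it on each $F_i+\rho$, checking the restrictions agree by the uniqueness clause of Lemma~\ref{lemma:RayBoundaryDecomp}, and gluing along the finite closed cover $P=\bigcup_i (F_i+\rho)$ is exactly what is needed and is left implicit in the paper. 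So your proposal is correct, matches the paper's strategy, and makes explicit a detail the paper's proof elides.
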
\begin{proof}
It follows from Lemma \ref{lemma:RayBoundaryDecomp} the addition map $+:\rho \times B\rightarrow P$ is a homeomorphism. Since $T$ is locally finite in $\rho$ and $\Pi$ is locally finite in $B$, the product family $T\times\Pi=\{R\times Q\mid R\in T, Q\in\Pi\}$ is locally finite in $\rho\times B$. Thus the family $\Pi'=\{R+Q\mid R\in T, Q\in\Pi\}$ of images of sets in $T\times\Pi$ under the homeomorphism $+\fncolon\rho\times B\to P$ is locally finite in $P$. Because $P$ is closed in $W$, this gives us that $\Pi'$ is locally finite in $W$.
\end{proof}


\subsection{Local finiteness in $W(\Sigma)$}\label{subsec:LocallyFiniteInExtension}

In proving Proposition \ref{prop:LocallyFiniteSubdividePolyhedron}, we used the fact that $P$ is closed in $W$. In order to use a similar proof to conclude that a polyhedral complex is locally finite in the tropical toric variety $W(\Sigma)$, we will need to consider the closure of $P$ in the tropical toric variety. 
We start by giving an explicit description of the closure of $P$ in $W(\Sigma)$ when $P$ and $\Sigma$ are suitably compatible. See Figure \ref{Fig:ClosureOfPolyhedron} for an explicit example. The rest of this subsection will be devoted to generalizing results from the previous subsection.

\begin{lemma}\label{lemma:ClosureOfPolyhedron}
Let $P\subset W$ be a pointed polyhedron with recession cone $\sigma:=\rec P$ and let $\Sigma$ be a finite fan in $W$ such that $\Sigma\cup\Faces(\sigma)$ is also a fan. Then the closure of $P$ in $W(\Sigma)$ is 
$$\nbar{P}=\bigcup_{\substack{\tau\leq\sigma\\\tau\in\Sigma}}\pi_{\tau}(P),$$
where $\pi_\tau\fncolon W\to W/\vspan\tau$ is the canonical projection map.
\end{lemma}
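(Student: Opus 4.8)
The plan is to show the two inclusions $\nbar{P}\subseteq\bigcup_{\tau\leq\sigma,\,\tau\in\Sigma}\pi_\tau(P)$ and $\bigcup_{\tau\leq\sigma,\,\tau\in\Sigma}\pi_\tau(P)\subseteq\nbar{P}$ separately. Since the closure may be computed inside any affine chart $W(\rho)$ containing $P$ (as $W(\rho)$ is open in $W(\Sigma)$), and since the hypothesis that $\Sigma\cup\Faces(\sigma)$ is a fan guarantees that every face $\tau\leq\sigma$ lying in $\Sigma$ is a face of some maximal cone $\rho$ of $\Sigma$ with $\sigma\cap\rho\supseteq\tau$, I would first reduce to working in a single chart $W(\rho)$ where $\rho\in\Sigma$ is a cone with $\tau:=\rho\cap\sigma$ a common face — actually, the cleanest reduction is: it suffices to prove the statement when $\Sigma=\Faces(\rho)$ for a single cone $\rho$ such that $\Faces(\rho)\cup\Faces(\sigma)$ is a fan, because $\nbar{P}\cap W(\rho)$ is the closure of $P$ in $W(\rho)$ and $W(\Sigma)=\bigcup_{\rho\in\Sigma}W(\rho)$, and a point of $W/\vspan\tau$ with $\tau\leq\sigma$ lies in $W(\rho)$ iff $\tau\leq\rho$. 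So from now on assume $\Sigma=\Faces(\rho)$.

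For the inclusion $\pi_\tau(P)\subseteq\nbar{P}$ when $\tau\leq\sigma$ and $\tau\leq\rho$: given $w\in P$, I want to exhibit $\pi_\tau(w)$ as a limit in $W(\rho)$ of points of $P$. The natural candidate is $w+t v$ for $v$ in the relative interior of $\tau$ and $t\to\infty$: since $\tau\leq\sigma=\rec P$ we have $w+tv\in P$ for all $t\geq 0$, and I claim $w+tv\to\pi_\tau(w)$ in $W(\rho)$. This is a direct computation in the monoid-homomorphism description of $W(\rho)$: embedding $W(\rho)\hookrightarrow\nbar\R^m$ via generators $u_1,\dots,u_m$ of $\rho^\vee$, the point $w+tv$ maps to $(\angbra{u_j,w}+t\angbra{u_j,v})_j$, and since $v\in\mathrm{relint}(\tau)$ we have $\angbra{u_j,v}>0$ exactly for those $u_j\notin\tau^\perp$ and $\angbra{u_j,v}=0$ for $u_j\in\tau^\perp$; hence the $j$-th coordinate tends to $\angbra{u_j,w}$ if $u_j\in\tau^\perp$ and to $+\infty$ otherwise, which is precisely the image of $\pi_\tau(w)\in W/\vspan\tau\subseteq W(\rho)$. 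Here I use $\tau\subseteq\rho$ so that $v\in\rho$ and the computation stays inside $W(\rho)$; I should also note $\tau^\perp\cap\rho^\vee$ generates $\tau^\vee$ appropriately so the limit point genuinely corresponds to $\pi_\tau(w)$ under the chart identification. This shows $\pi_\tau(P)\subseteq\nbar P$, and taking the union over all such $\tau$ gives one inclusion (using that $\nbar P$ is closed, so it contains the closure of each $\pi_\tau(P)$ — but in fact we only need the points themselves).

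For the reverse inclusion $\nbar P\subseteq\bigcup_\tau\pi_\tau(P)$: a point of $W(\rho)$ lies in $W/\vspan\tau$ for a unique $\tau\leq\rho$, so it suffices to show that for each $\tau\leq\rho$, the set $\nbar P\cap(W/\vspan\tau)$ is contained in $\pi_\tau(P)$, and moreover is empty unless $\tau\leq\sigma$. Take a sequence (or net) $w_k\in P$ converging to a point $\bar w\in W/\vspan\tau$. Working in coordinates $(\angbra{u_j,\cdot})_j\in\nbar\R^m$ as above: for the indices $j$ with $u_j\in\tau^\perp$, $\angbra{u_j,w_k}\to\angbra{u_j,\bar w}$ (finite), and for $j$ with $u_j\notin\tau^\perp$, $\angbra{u_j,w_k}\to+\infty$. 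Decompose $w_k=q_k+r_k$ using a fixed decomposition $W=\vspan(\tau)\oplus U$; the convergence of the $\tau^\perp$-coordinates pins down the $U$-part of $w_k$ as bounded, converging to a lift $\hat w$ of $\bar w$, while the $\vspan\tau$-part $p_k$ of $w_k$ runs off to infinity in the direction forced by $u_j\notin\tau^\perp$. Writing $p_k = \hat w - (U\text{-part}) + p_k$... more carefully: I want to show $\hat w\in P$ for a suitable choice of lift, equivalently $\pi_\tau(P)\ni\bar w$. The key point is that $P$ is closed in $W$ and, since the $\vspan\tau$-directions along which $w_k$ escapes must lie in $\rec P$ (this is where I invoke that for large $k$ the difference $w_{k+1}-w_k$ has $\vspan\tau$-component pointing into a direction which, by the fan hypothesis $\Faces(\rho)\cup\Faces(\sigma)$ being a fan, must lie in $\sigma$ — otherwise $P$ would be unbounded in a direction not in $\rec P$, impossible), we get $\tau\leq\sigma$ and, subtracting a large enough multiple of an interior vector of $\tau$ from a convergent subsequence, a genuine point of $P$ projecting to $\bar w$.

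The main obstacle is this last paragraph: making rigorous the claim that the directions in $\vspan\tau$ along which points of $P$ can escape to a given boundary stratum are exactly the directions in $\tau$ itself (so that the limit really lands in $\pi_\tau(P)$ and not in a larger set), and correctly extracting the limit point in $P$. I expect the clean way is to use the structure theorem $P = Q + \sigma$ with $Q$ a polytope: then $w_k = q'_k + s_k$ with $q'_k\in Q$ bounded and $s_k\in\sigma$, pass to a subsequence so $q'_k\to q'\in Q$, analyze $\pi_\tau$-coordinates to see $s_k$'s $\tau^\perp$-part (equivalently its image in $W/\vspan\tau$, intersected with $\pi_\tau(\sigma)$) converges, and its $\vspan\tau$-part goes to infinity inside $\sigma\cap\vspan\tau$; combined with $\tau\leq\rho$ and the compatibility of the fans, conclude $\tau = \tau_0$ for some face $\tau_0\leq\sigma\cap\rho$ and $\bar w = \pi_\tau(q') + \pi_\tau(\lim\text{-part of }s_k)\in\pi_\tau(Q+\sigma)=\pi_\tau(P)$. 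The fan-compatibility hypothesis is what guarantees $\tau$, being a face of $\rho$ in whose closure part of $P$ accumulates, is forced to be a face of $\sigma$; without it the closure could be strictly larger. I would present the argument via the $Q+\sigma$ decomposition and the $\nbar\R^m$-coordinate description of $W(\rho)$, since both are already set up in the preliminaries.
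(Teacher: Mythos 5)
Your chart-by-chart reduction and your first inclusion ($\pi_\tau(P)\subseteq\nbar{P}$ via $w+tv$ with $v\in\operatorname{relint}\tau$) are correct, and the statement that a limit point lying in the stratum $W/\vspan\tau$ actually lies in $\pi_\tau(P)$ can also be salvaged (pair against the finitely many defining inequalities of the polyhedron $\pi_\tau(P)$; their pullbacks lie in $\tau^\perp$, which is spanned by the face $\rho^\vee\cap\tau^\perp$, so all the relevant coordinates converge). The genuine gap is the remaining half of the reverse inclusion: you must show that $\nbar{P}$ meets \emph{no} stratum $W/\vspan\tau$ with $\tau\in\Sigma$ but $\tau\not\leq\sigma$, and this is precisely where the hypothesis that $\Sigma\cup\Faces(\sigma)$ is a fan does its work. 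The justifications you offer do not hold up: the claim that the successive differences $w_{k+1}-w_k$ (or the $\vspan\tau$-components of the $s_k$) ``must lie in $\sigma$, otherwise $P$ would be unbounded in a direction not in $\rec P$'' is false as stated (a polyhedron contains unbounded sequences whose successive differences are not in its recession cone, e.g.\ $(k,1+(-1)^k)$ in the first quadrant), and the $\vspan\tau$-component of an element of $\sigma$ need not lie in $\sigma\cap\vspan\tau$. What normalization $s_k/\|s_k\|$ actually yields is a single nonzero limit vector in $\sigma\cap\tau$; this does not by itself rule out $\tau\not\subseteq\sigma$. To close the gap one needs a real argument that boundedness of the $\rho^\vee\cap\tau^\perp$-coordinates together with blow-up of the remaining generator coordinates forces $\operatorname{relint}(\tau)\cap\sigma\neq\emptyset$ (for instance a Farkas/LP-duality argument showing the polyhedron $\{s\in\sigma\mid\angbra{u_j,s}=0\text{ for }u_j\in\tau^\perp,\ \angbra{u_j,s}\geq1\text{ otherwise}\}$ is nonempty); only then does the fan hypothesis, via the common face $\sigma\cap\rho$, give $\tau\subseteq\sigma\cap\rho$ and hence $\tau\leq\sigma$. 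You flag this step yourself as the main obstacle, and as written it is missing, so the proof is incomplete at its crucial point.

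For comparison, the paper does not argue directly at all: it quotes Rabinoff's computation of $\nbar{P}$ (Proposition 3.19 and Remark 3.21 of that reference) in the case $\sigma\in\Sigma$, and then treats general $\Sigma$ by observing that $W(\Sigma)$ carries the subspace topology from $W(\Sigma\cup\Faces(\sigma))$, so $\nbar{P}$ is the intersection of $W(\Sigma)$ with the closure computed there; intersecting with $W(\Sigma)$ discards exactly the strata indexed by faces $\tau\leq\sigma$ not in $\Sigma$. If you want a self-contained proof your outline is completable, but the stratum-exclusion step above is the part that must actually be proved.
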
\begin{proof}

In the case when $\Sigma$ contains $\sigma$, \cite[Proposition 3.19 and Remark 3.21]{RabinoffTropicalAnalyticGeometry} shows that $\nbar{P}=\bigcup_{\tau\leq\sigma}\pi_{\tau}(P)$. 
For an arbitrary $\Sigma$, the topology on $W(\Sigma)$ is the same as the subspace topology induced by the inclusion $W(\Sigma)\subset W(\Sigma\cup\Faces(\sigma))$, so the closure $\nbar{P}$ of $P$ in $W(\Sigma)$ is the intersection of $W(\Sigma)$ with the closure of $P$ in $W(\Sigma\cup\Faces(\sigma))$. 
By the first case we know that the closure of $P$ in $W(\Sigma\cup\Faces(\sigma))$ is $\bigcup_{\tau\leq\sigma}\pi_{\tau}(P)$, and the desired description of $\nbar{P}$ follows immediately.
\end{proof}

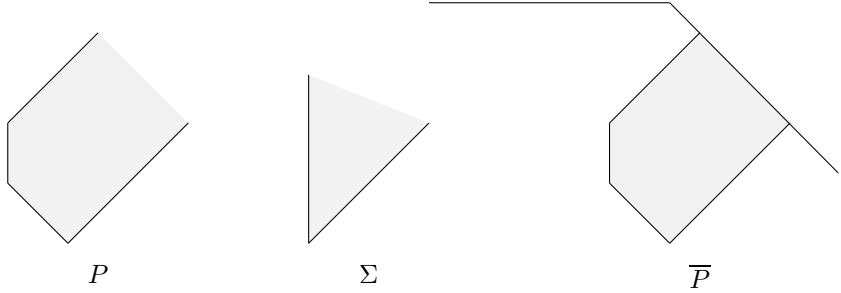
\begin{figure}
\begin{center}\begin{tikzpicture}[scale=0.8]
\fill[gray!10!white] (3,2) -- (1,0) -- (0,1) -- (0,2) -- (1.5,3.5) -- cycle;
\draw(3,2) -- (1,0);
\draw(1,0) -- (0,1);
\draw(0,1) -- (0,2);
\draw(0,2) -- (1.5,3.5);
\node[below] at (1.5,-0.2) {$P$};

\fill[gray!10!white] (5,0)-- (5,2.8) -- (7,2) -- cycle;
\draw(5,0)-- (5,2.8);
\draw(5,0) -- (7,2);
\node[below] at (6,-0.2) {$\Sigma$};

\fill[gray!10!white] (13,2) -- (11,0) -- (10,1) -- (10,2) -- (11.5,3.5) -- cycle;
\draw(13,2) -- (11,0);
\draw(11,0) -- (10,1);
\draw(10,1) -- (10,2);
\draw(10,2) -- (11.5,3.5);
\node[below] at (11.5,-0.2) {$\nbar{P}$};

\draw(7,4) -- (11,4);
\draw(11,4) -- (13.8,1.17);

\end{tikzpicture}\end{center}
\caption{The closure of $P$ in $W(\Sigma)$, the lines in third figure represent the boundary of $W$ in $W(\Sigma)$.}\label{Fig:ClosureOfPolyhedron}
\end{figure}

We now generalize Lemma \ref{lemma:RayBoundaryDecomp} and Lemma \ref{lemma:ProjectionsOfSumAffine} for the closure of a polyhedron in  $W(\Sigma)$.

\begin{lemma}\label{lemma:RayBoundaryClosureDecomp}
Let $P\subset W$ be a unbounded, pointed polyhedron with recession cone $\sigma:=\rec P$ and let $\Sigma$ be a finite fan in $W$ such that $\Sigma\cup\Faces(\sigma)$ is also a fan. Let $\rho$ be a ray that meets the relative interior of $\sigma$ and let $B$ be the union of those faces of $P$ whose recession cones are strictly smaller than $\sigma$. 
Then the addition map $+\fncolon W\times W(\Sigma)\to W(\Sigma)$ sends $\rho\times\nbar{B}$ to $\nbar{P}$. 
Moreover, if $\Sigma$ does not contain $\sigma$ then the restricted addition map $\rho\times\nbar{B}\toup{+}\nbar{P}$ is a bijection.
\end{lemma}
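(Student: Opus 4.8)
The plan is to leverage Lemma~\ref{lemma:ClosureOfPolyhedron} to describe $\nbar P$ and $\nbar B$ explicitly, and then to analyze the addition map one stratum at a time. Write $\sigma = \rec P$. By Lemma~\ref{lemma:ClosureOfPolyhedron}, $\nbar P = \bigcup_{\tau \leq \sigma,\ \tau \in \Sigma} \pi_\tau(P)$. For the set $B$, note that its recession cone is $\bigcup$ of the proper faces of $\sigma$ — more precisely $B$ is the union of those faces $F \leq P$ with $\rec F \lneq \sigma$, so $\rec B$ (as a set) is the union of the proper faces of $\sigma$. Since $\Sigma \cup \Face(\sigma)$ is a fan, $\Sigma \cup \Face(\rec F)$ is a fan for each such $F$, and applying Lemma~\ref{lemma:ClosureOfPolyhedron} to each face $F$ of $P$ appearing in $B$ gives $\nbar B = \bigcup_{\tau} \pi_\tau(B)$, where the union is over those $\tau \in \Sigma$ with $\tau \leq \sigma$ but $\tau \neq \sigma$ (the condition $\tau \neq \sigma$ is automatic once $\Sigma$ does not contain $\sigma$, but even without that hypothesis the faces $F$ contributing to $B$ all have $\rec F \lneq \sigma$, so only $\tau \lneq \sigma$ occur). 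I would record this as a first step.

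For the first claim (the map sends $\rho \times \nbar B$ into $\nbar P$): a point of $\nbar B$ lies in some $\pi_\tau(B) \subset \pi_\tau(P)$ with $\tau \leq \sigma$, $\tau \in \Sigma$. Adding an element $v \in \rho \subset W$ acts on the stratum $W/\vspan\tau$ by first projecting $v$ into $W/\vspan\tau$ (since $\tau \leq \sigma$ and the relevant join is $\tau$ itself) and then translating; as $\pi_\tau(P)$ is closed under adding $\pi_\tau(\rec P) = \pi_\tau(\sigma) \supseteq \pi_\tau(\rho)$, the result stays in $\pi_\tau(P) \subseteq \nbar P$. This is essentially bookkeeping with the description of the monoid structure on $W(\Sigma)$ from \S\ref{subsec:TropicalToricVars}.

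For the second claim — bijectivity when $\sigma \notin \Sigma$ — I would argue stratum by stratum. Fix $\tau \in \Sigma$ with $\tau \leq \sigma$; since $\sigma \notin \Sigma$ we have $\tau \lneq \sigma$, so $\rho \not\subset \vspan\tau$ and hence $\pi_\tau(\rho)$ is still a genuine ray meeting the relative interior of $\pi_\tau(\sigma) = \rec(\pi_\tau(P))$ inside $W/\vspan\tau$. The preimage of the stratum $W/\vspan\tau$ under $+\colon \rho \times \nbar B \to \nbar P$ is $\rho \times (\nbar B \cap W/\vspan\tau)$, and I claim $\nbar B \cap (W/\vspan\tau) = \pi_\tau(B)$ equals the union of the faces of $\pi_\tau(P)$ whose recession cones are proper faces of $\pi_\tau(\sigma)$. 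This identification is where Lemma~\ref{lemma:FaceModRecessionGivesFace} does the work: faces of $\pi_\tau(P)$ are images $\pi_\tau(F)$ of faces $F$ of $P$ with $\tau \leq \rec F$, and $\pi_\tau(F)$ is a proper face of $\pi_\tau(P)$ exactly when $F$ is a proper face of $P$, which (for the faces with $\tau \leq \rec F \lneq \sigma$) is the same as $F$ contributing to $B$. Granting this, Lemma~\ref{lemma:RayBoundaryDecomp} applied to the pointed polyhedron $\pi_\tau(P)$ in $W/\vspan\tau$ with the ray $\pi_\tau(\rho)$ says that $\pi_\tau(\rho) \times \bigl(\text{that boundary union}\bigr) \to \pi_\tau(P)$ is a bijection. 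Combined with the fact that $v \mapsto \pi_\tau(v)$ is a bijection $\rho \to \pi_\tau(\rho)$ (as $\rho \cap \vspan\tau = \{0\}$, using that $\Sigma \cup \Face(\sigma)$ is a fan so $\tau$ is a face of $\sigma$ and $\rho$ meets $\mathrm{relint}\,\sigma$), this shows $+$ restricts to a bijection onto each stratum of $\nbar P$. Since the strata partition both source and target compatibly, the full map is a bijection.

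The main obstacle I anticipate is the stratum-identification $\nbar B \cap (W/\vspan\tau) = \pi_\tau(B)$ together with the claim that this equals precisely the boundary union of $\pi_\tau(P)$ needed to feed into Lemma~\ref{lemma:RayBoundaryDecomp}: one must check that no face $F$ of $P$ with $\rec F = \sigma$ sneaks a contribution into the stratum $W/\vspan\tau$ (handled by $\tau \lneq \sigma$ and properness via Lemma~\ref{lemma:FaceModRecessionGivesFace}), and conversely that every proper face of $\pi_\tau(P)$ arises from some $F \leq P$ with $\tau \leq \rec F \lneq \sigma$. The rest is careful but routine tracking of the monoid operation on $W(\Sigma)$ and of recession cones under the projections $\pi_\tau$, for which Lemma~\ref{lemma:MapAndRecessionCommute} is the relevant tool.
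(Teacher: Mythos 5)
Your proposal is correct and takes essentially the same route as the paper's proof: describe $\nbar{B}$ and $\nbar{P}$ via Lemma \ref{lemma:ClosureOfPolyhedron}, verify the containment by working in a single stratum, and prove bijectivity stratum by stratum by reducing, on each $W/\vspan\tau$ with $\tau\lneq\sigma$, to Lemma \ref{lemma:RayBoundaryDecomp} applied to $\pi_\tau(P)$ and $\pi_\tau(\rho)$, using Lemmas \ref{lemma:MapAndRecessionCommute} and \ref{lemma:FaceModRecessionGivesFace} to identify $\nbar{B}\cap W/\vspan\tau$ with the union of faces of $\pi_\tau(P)$ whose recession cones are proper faces of $\rec\pi_\tau(P)$. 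Only take care, when writing it up, to keep the distinction between ``proper face of $\pi_\tau(P)$'' and ``face whose recession cone is a proper face of $\rec\pi_\tau(P)$'' (your stated claim is the correct one, but one justifying sentence blurs the two).
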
\begin{proof}
First we describe $\nbar{B}$. Let $\calB$ be the set of faces of $P$ whose recession cones are strictly smaller than $\sigma$. By Lemma \ref{lemma:ClosureOfPolyhedron} we have
\begin{equation}\label{eqn:DescribeBBar}
\nbar{B}=\bigcup_{F\in\calB}\nbar{F}
=\bigcup_{F\in\calB}\bigcup_{\substack{\tau\leq\rec F\\\tau\in\Sigma}}\pi_\tau(F)
\end{equation}
where $\nbar{F}$ is the closure of $F$ in $W(\Sigma)$.

Now we show that the addition map $+\fncolon W\times W(\Sigma)\to W(\Sigma)$ sends $\rho\times\nbar{B}$ to $\nbar{P}$. Notice that if $w\in \nbar{B}$ then by (\ref{eqn:DescribeBBar}) $w\in \pi_{\tau}(F)$ for some $F\in \calB$ and $\tau\leq \rec F$. If $v\in \rho$ then  we can compute $v+w$ by taking $z\in F \subseteq B$ such that $\pi_{\tau}(z)=w$ and noting that $v+w=\pi_{\tau}(v+z)$. We have $\pi_{\tau}(v+z)\in \pi_{\tau}(P)$ by Lemma \ref{lemma:RayBoundaryDecomp} and by Lemma \ref{lemma:ClosureOfPolyhedron} we have $\pi_{\tau}(P)\subseteq \nbar{P}$.

Finally, we show that $\rho\times\nbar{B}\toup{+}\nbar{P}$ is a bijection when $\Sigma$ does not contain $\sigma$. For any $w\in \nbar{B}$ and $v\in \rho$, $v+w\in W/\vspan \tau$ if and only if $w\in W/\vspan \tau$. Thus we can proceed by showing that for every $\tau\neq \sigma$ the map $\rho \times (\nbar{B} \cap W/\vspan \tau ) \rightarrow \nbar{P}\cap W/ \vspan \tau$ is a bijection. By Lemma \ref{lemma:ClosureOfPolyhedron} we have that $\overline{P}\cap W/\vspan \tau=\pi_{\tau}(P)$. If $\tau\not\leq\sigma$ then Lemma \ref{lemma:ClosureOfPolyhedron} gives us that $\nbar{B}\cap W/\vspan\tau\subset\nbar{P}\cap W/\vspan\tau=\emptyset$, so suppose $\tau<\sigma$. In this case we will show that $\pi_{\tau}(P)$ is a pointed and unbounded polyhedron in $W/\vspan \tau$, that $\nbar{B}\cap W/\vspan \tau$ is the union of faces of $\pi_{\tau}(P)$ whose recession cones are proper faces of $\rec \pi_{\tau}(P)$, and that $\pi_{\tau}(\rho)$ is a ray meeting the relative interior of $\pi_{\tau}(P)$. Thus we can apply Lemma \ref{lemma:RayBoundaryDecomp} to the map $\rho \times (\nbar{B} \cap W/\vspan \tau ) \rightarrow \nbar{P}\cap W/ \vspan \tau$.

Because $\tau < \sigma$ it follows from Lemma \ref{lemma:MapAndRecessionCommute} it follows that $\pi_{\tau}(P)$ is pointed and unbounded. 

We also have that
$$\nbar{B}\cap W/\vspan\tau=\dcup_{\substack{F\in\calB\\ \tau\leq\rec F}}\pi_\tau(F).$$
Lemma \ref{lemma:FaceModRecessionGivesFace} gives that $\pi_\tau(F)$ is a face of $\pi_{\tau}(P)$ when $F\in \calB $ and $\tau \leq \rec F$. Lemma \ref{lemma:MapAndRecessionCommute} gives that, for each such face $F$, $\pi_{\tau}(F)$ is a face whose recession cone is a proper face of $\rec \pi_{\tau}(P)$. To see that these are all such faces of $\pi_{\tau}(P)$ consider a face $F'$ of $\pi_{\tau}(P)$ satisfying $\rec F'<\pi_{\tau}(\sigma)$. Then $F:=P\cap\pi_{\tau}^{-1}(F')$ is a face of $P$ such that $\pi_{\tau}(F)=F'$. We have $\tau\subset\rec F$ because $\tau$ is contained in both $\sigma$ and $\ker\pi_{\tau}$ and so $F+\tau=F$, and further $\tau\leq\rec F$ because both $\tau$ and $\rec F$ are faces of $\sigma$.

Finally, we have 
$$\vspan(\tau)\cap\rho=\vspan(\tau)\cap\sigma\cap\rho=\tau\cap\rho=\{0\},$$
from which we get that $\pi_\tau$ maps $\rho$ bijectively onto its image. In particular, $\pi_\tau(\rho)$ is a ray. Note that $\pi_\tau(\rho)\subset\pi_\tau(\sigma)=\rec(\pi_\tau(P))$ by Lemma \ref{lemma:MapAndRecessionCommute}. Further, $\pi_\tau(\rho)$ meets the relative interior of $\pi_\tau(\sigma)$, for if not $\pi_\tau(\rho)$ would be contained in a proper face $\sigma'$ of $\pi_\tau(\sigma)$, but then $\rho$ would be contained in $\pi_\tau^{-1}(\sigma')\cap\sigma$ which is a proper face of $\sigma$.

Thus we can apply Lemma \ref{lemma:RayBoundaryDecomp} to see that the map $\rho \times (\nbar{B} \cap W/\vspan \tau ) \rightarrow \nbar{P}\cap W/ \vspan \tau$ is a bijection.
\end{proof}

\begin{lemma}\label{lemma:RayBoundaryHomeo}
Using the same notation as above, assume that $\Sigma$ does not contain $\sigma$. Then the addition map $+\fncolon\rho\times\nbar{B}\to\nbar{P}$ is a homeomorphism.
\end{lemma}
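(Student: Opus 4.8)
The plan is to upgrade the bijection from Lemma \ref{lemma:RayBoundaryClosureDecomp} to a homeomorphism by checking continuity of the addition map and of its inverse separately. Continuity of $+\fncolon\rho\times\nbar B\to\nbar P$ is immediate: it is the restriction of the continuous global addition action $W\times W(\Sigma)\to W(\Sigma)$ recalled in \S\ref{subsec:TropicalToricVars}, and $\rho\times\nbar B$ carries the subspace topology. So the real content is to produce a continuous inverse, i.e.\ continuous maps $\nbar P\to\rho$ and $\nbar P\to\nbar B$ realizing each point of $\nbar P$ as (point of $\rho$) + (point of $\nbar B$).

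First I would recall from the proof of Lemma \ref{lemma:RayBoundaryClosureDecomp} the stratified picture: $\nbar P=\bigsqcup_{\tau<\sigma,\,\tau\in\Sigma}\pi_\tau(P)$, $\nbar B=\bigsqcup_{\tau<\sigma,\,\tau\in\Sigma}(\nbar B\cap W/\vspan\tau)$, and on the $\tau$-stratum the bijection is exactly the vector-space-level decomposition $\pi_\tau(\rho)\times(\nbar B\cap W/\vspan\tau)\to\pi_\tau(P)$ of Lemma \ref{lemma:RayBoundaryDecomp}, whose inverse is affine by Lemma \ref{lemma:ProjectionsOfSumAffine} (note $\pi_\tau$ maps $\rho$ isomorphically onto $\pi_\tau(\rho)$, so we may identify $\rho$ with $\pi_\tau(\rho)$). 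Thus the inverse $\nbar P\to\rho\times\nbar B$ is, stratum by stratum, a restriction of an affine — hence continuous — map. The task is to see that these stratified pieces glue to a globally continuous map. The clean way to do this is: since $W(\Sigma)$ is Hausdorff (Lemma \ref{lemma:TropicalToricVarsHausdorff}), so is $\rho\times\nbar B$, and it suffices to show $\nbar P$ is compact-ly-exhaustible in a way that lets us apply the closed-map criterion — more precisely, I would argue that $+\fncolon\rho\times\nbar B\to\nbar P$ is a \emph{closed} map, and a continuous closed bijection is a homeomorphism.

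To get closedness I would use properness. Observe $\nbar P$ is closed in the locally compact Hausdorff space $W(\Sigma)$, and $\nbar B$ is closed in $\nbar P$ (it is a finite union of closures $\nbar F$ of faces, each closed). The map $+\fncolon\rho\times\nbar B\to\nbar P$ is proper: a sequence (or net) in $\rho\times\nbar B$ has a subsequence converging iff its image does, which one checks using the explicit coordinates $W(\sigma')\hookrightarrow\nbar\R^m$ of \S\ref{subsec:TropicalToricVars} — in these coordinates, the $\rho$-coordinate and the $\nbar B$-coordinates are each recovered from the coordinates of the sum (on each stratum by the affine formula, and a point escaping to the boundary of a stratum forces the corresponding behavior on exactly one of the two factors, since $v+w\in W/\vspan\tau$ iff $w\in W/\vspan\tau$). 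A proper continuous map to a locally compact Hausdorff space is closed, so we are done.

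\textbf{The main obstacle} I anticipate is the gluing/continuity-across-strata issue: the decomposition maps are only piecewise affine, defined separately on each $W/\vspan\tau$, and one must rule out pathological behavior where a net in $\nbar P$ converges to a boundary point but the preimages in $\rho\times\nbar B$ do not converge. The properness argument above is designed precisely to sidestep an ad hoc neighborhood-by-neighborhood continuity check; the delicate point inside it is the verification, in the $\nbar\R^m$-coordinates, that the $\rho$-component stays bounded (it cannot run off to $\infty$ because $\rho\subset W=W/\{0\}$ and $v+w$ lying in a deeper stratum is controlled solely by $w$), together with the observation that on each stratum the affine inverse formula has coefficients independent of which point we took — so local finiteness of $\Sigma$ gives only finitely many strata and hence uniform control. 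Once properness is in hand, "continuous + proper + target locally compact Hausdorff $\Rightarrow$ closed", combined with the bijectivity already proved in Lemma \ref{lemma:RayBoundaryClosureDecomp}, finishes the proof.
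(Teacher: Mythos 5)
Your high-level strategy — continuity of addition, bijectivity from Lemma \ref{lemma:RayBoundaryClosureDecomp}, a properness argument, then ``a proper map to a locally compact Hausdorff space is closed'' plus ``a closed continuous bijection is a homeomorphism'' — is sound in outline, and it is genuinely different from the paper's. The paper instead reduces to $\Sigma=\properFaces{\sigma}$, proves that $\nbar{B}$ is \emph{compact} (a finite union of closures $\nbar{F}$, compact by \cite[3.18.1]{RabinoffTropicalAnalyticGeometry}), exhausts $\rho\times\nbar{B}$ by the compacta $[0,n]v\times\nbar{B}$ whose relative interiors map onto an open cover of $\nbar{P}$, and applies the compact-to-Hausdorff criterion on each piece. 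Your route sidesteps compactness of $\nbar{B}$ in favor of properness, which can in principle be made to work.

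The properness step, which you flag as the delicate point, is however where the proposal has a real gap. The justification you offer for boundedness of the $\rho$-component — that ``$\rho\subset W=W/\{0\}$ and $v+w$ lying in a deeper stratum is controlled solely by $w$'' — does not do the job: knowing that the stratum of $v+w$ is determined by $w$ says nothing about whether the magnitude of $v$ stays bounded while $v+w$ ranges over a compact set. The actual mechanism, which your sketch never reaches, is that $\sigma\notin\Sigma$ forces the bound. Writing $\rho=\R_{\geq0}v$ with $v$ in the relative interior of $\sigma$, if $t_n\to\infty$ and $b_n\in\nbar{B}$, then for every $u\in\sigma^\vee\sdrop\sigma^\perp$ one has $\angbra{u,v}>0$, so the $u$-coordinate $t_n\angbra{u,v}+\ph_{b_n}(u)\geq t_n\angbra{u,v}+\inf_P u$ diverges; hence $t_nv+b_n$ limits in $W(\sigma)$ into the stratum $W/\vspan\sigma$, which is absent from $W(\Sigma)$ precisely because $\sigma\notin\Sigma$, so no subsequence can converge in $\nbar{P}$. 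The lemma is false without the hypothesis $\sigma\notin\Sigma$, so a proof that never invokes it at the crucial step cannot be complete. (One also needs to control the $\nbar{B}$-factor of the preimage of a compact set: once the $\rho$-factor is bounded one can either appeal to the paper's compactness of $\nbar{B}$, or note that translation by $W$ is a homeomorphism of $W(\Sigma)$ and $b_n=(-r_n)+(r_n+b_n)$.) As written, the central properness claim is asserted rather than proved, and the reason given for it is incorrect.
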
\begin{proof}
First, observe that we can reduce to the case where $\Sigma$ is the set $\properFaces{\sigma}:=\Faces(\sigma)\sdrop\{\sigma\}$ of proper faces of $\sigma$. Indeed, by Lemma \ref{lemma:ClosureOfPolyhedron} we know that $\nbar{B}\subseteq W(\Sigma)$ is the same as the closure $\cl_{W(\Sigma)\cap W(\sigma)}(B)$ of $B$ in $W(\Sigma)\cap W(\sigma)$ which is contained in the closure $\cl_{W(\properFaces{\sigma})}(B)$ of $B$ in $W(\properFaces{\sigma})$. If we know that the addition map gives a homeomorphism $\rho\times\cl_{W(\properFaces{\sigma})}(B)\to\cl_{W(\properFaces{\sigma})}(P)$, then the restriction of the addition map to $\rho\times\nbar{B}$ is a homoemomorphism onto its image, which is equal to $\nbar{P}$ by Lemma \ref{lemma:RayBoundaryClosureDecomp}.

Now consider the case in which $\Sigma = \properFaces{\sigma}$. Let $\calB$ be the set of faces of $P$ whose recession cones are in $\properFaces{\sigma}$, so $\nbar{B}=\bigcup_{F\in\calB}\nbar{F}$ where $\nbar{F}$ is the closure of $F$ in $W(\properFaces{\sigma})$. By Lemma \ref{lemma:ClosureOfPolyhedron} $\nbar{F}$ is the same as the closure of $F$ in $W(\rec F)$, and so by \cite[3.18.1]{RabinoffTropicalAnalyticGeometry} $\nbar{F}$ is compact. Thus $\nbar{B}$, being a finite union of compact sets, is compact.

Fix a nonzero point $v$ in $\rho$, and for each positive integer $n$ let 
$$R_n:=\{tv\mid t\in[0,n]\}.$$ 
Each $R_n$ is compact, and the interior of $R_n$ as a subset of $\rho$ is $R_n^{\circ}:=\{tv\mid t\in[0,n)\}$ so the collection of all $R_n^{\circ}$ covers $\rho$. Thus $\calC:=\{R_n\times\nbar{B}\mid n>0\}$ is a collection of compact subsets of $\rho\times\nbar{B}$ and the collection of all $R_n^{\circ}\times \nbar{B}$ covers $\rho\times\nbar{B}$. Since the addition map $+\fncolon\rho\times\nbar{B}\to\nbar{P}$ is a bijection, we have that the collection of all $R_n^{\circ}+\nbar{B}$ covers $\nbar{P}$. Furthermore each $U_n:=R_n^\circ+\nbar{B}$ is an open subset of $\nbar{P}$. 

From Lemma \ref{lemma:TropicalToricVarsHausdorff} we know that $\nbar{P}\subset W(\properFaces{\sigma})$ is Hausdorff. Furthermore the collection $\{U_n \mid n > 0 \}$ is an open cover of $\nbar{P}$. Thus  it follows from\cite[Ch.\ 1, \S9.4, Corollary 2]{BourbakiGenTopI} the addition map $\rho\times\nbar{B}\toup{+}\nbar{P}$ is a homeomorphism.

\end{proof}

We can prove the main result of this subsection.

\begin{prop}\label{prop:LocallyFiniteSubdivideExtendedPolyhedron}
Let $P\subset W$ be a unbounded, pointed polyhedron with recession cone $\sigma:=\rec P$ and let $\Sigma$ be a finite fan in $W$ not containing $\sigma$ but such that $\Sigma\cup\Faces(\sigma)$ is also a fan. Let $\rho$ be a ray that meets the relative interior of $\sigma$ and let $B$ be the union of those faces of $P$ whose recession cones are proper faces of $\sigma$. Suppose that $T$ is a polyhedral subdivision of $\rho$ which is locally finite in $\rho$ and $\Pi$ is a polyhedral complex with support $B$ such that $\Pi$ is locally finite in $W(\Sigma)$. Then $\Pi':=\{Q+R\mid Q\in\Pi, R\in T\}$ is a polyhedral subdivision of $P$ which is locally finite in $W(\Sigma)$.
\end{prop}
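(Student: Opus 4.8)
The plan is to run exactly the same argument as in Proposition~\ref{prop:LocallyFiniteSubdividePolyhedron}, but replacing the homeomorphism $\rho\times B\to P$ (in $W$) with the homeomorphism $\rho\times\nbar{B}\to\nbar{P}$ in $W(\Sigma)$ supplied by Lemma~\ref{lemma:RayBoundaryHomeo}, and using the last observation of \S\ref{sec:Preliminaries} about local finiteness relative to a closed subspace. The conclusion that $\Pi'$ is a polyhedral subdivision of $P$ is already exactly Proposition~\ref{prop:SubdivideOnePolyhedron}, so nothing new is needed there; only the local-finiteness-in-$W(\Sigma)$ assertion requires work.

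For that, first I would observe that since $\Pi$ is locally finite in $W(\Sigma)$ and $|\Pi|=B\subseteq W$, and $\nbar{B}$ is the closure of $B$ in $W(\Sigma)$ (which is a closed subspace), $\Pi$ is locally finite in $\nbar{B}$; equivalently, since local finiteness of a family is equivalent to local finiteness of the family of closures, $\{\nbar{Q}\mid Q\in\Pi\}$ is locally finite in $\nbar{B}$, where $\nbar{Q}$ denotes the closure of $Q$ in $W(\Sigma)$. Likewise $T$ is locally finite in $\rho$. Then the product family $\{R\times\nbar{Q}\mid R\in T,\ Q\in\Pi\}$ is locally finite in $\rho\times\nbar{B}$, using the stated fact that a product of locally finite families is locally finite in the product space. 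Next, I would apply the homeomorphism $+\colon\rho\times\nbar{B}\toup{\sim}\nbar{P}$ from Lemma~\ref{lemma:RayBoundaryHomeo}: the images $R+\nbar{Q}$ of the sets in this product family form a locally finite family in $\nbar{P}$, since a homeomorphism carries locally finite families to locally finite families. Finally, because $\nbar{P}$ is closed in $W(\Sigma)$ and contains $\bigcup_{P'\in\Pi'}P'=P$ (indeed contains the closure of each such $P'$), local finiteness in $\nbar{P}$ is equivalent to local finiteness in $W(\Sigma)$, which gives the claim.

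The one point needing a little care—and the step I expect to be the main obstacle—is matching up the images of the product family with the cells of $\Pi'$: I want $R+\nbar{Q}$ (closure in $W(\Sigma)$ of the image under addition) to coincide with, or at least have the same closure as, the closure of $Q+R$, so that local finiteness of $\{R+\nbar Q\}$ really is local finiteness of $\{\nbar{Q+R}\}=\{\nbar{P'}\mid P'\in\Pi'\}$. Since $Q+R$ is a polyhedron in $W$ with $R$ compact, its closure in $W(\Sigma)$ can be computed by Lemma~\ref{lemma:ClosureOfPolyhedron}, and because translation by the bounded set $R$ commutes with the projections $\pi_\tau$ appearing there, one checks $\nbar{Q+R}=\nbar{Q}+R$; alternatively one notes directly that $R+\nbar{Q}$ is the image of the closed (hence, $R$ being compact, closure-preserving) set $R\times\nbar Q$ under the homeomorphism, hence is closed in $\nbar P$ and equals the closure of $Q+R$. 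Once this identification is in hand the argument is a routine transcription of Proposition~\ref{prop:LocallyFiniteSubdividePolyhedron}, and I would keep the write-up correspondingly short.
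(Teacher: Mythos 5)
Your proposal is correct and follows essentially the same route as the paper: Proposition~\ref{prop:SubdivideOnePolyhedron} for the subdivision claim, then the homeomorphism $+\fncolon\rho\times\nbar{B}\to\nbar{P}$ of Lemma~\ref{lemma:RayBoundaryHomeo} applied to the locally finite product family, and finally closedness of $\nbar{P}$ in $W(\Sigma)$. The only difference is that the paper transports the family $\{R\times Q\mid R\in T, Q\in\Pi\}$ itself rather than the family of closures, so the identification $R+\nbar{Q}=\nbar{Q+R}$ that you flag as the main obstacle never needs to be addressed.
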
\begin{proof}
By Proposition \ref{prop:SubdivideOnePolyhedron}, $\Pi'$ is a polyhedral subdivision of $P$.

Lemma \ref{lemma:RayBoundaryHomeo} tells us that the addition map $+\fncolon\rho\times \nbar{B}\to\nbar{P}$ is a homeomorphism where $\nbar{B}$ and $\nbar{P}$ are the closures of $B$ and $P$, respectively, in $W(\Sigma)$. Since $T$ is locally finite in $\rho$ and $\Pi$ is locally finite in $\nbar{B}\subset W(\Sigma)$, the product family $T\times\Pi=\{R\times Q\mid R\in T, Q\in\Pi\}$ is locally finite in $\rho\times\nbar{B}$. Thus the family $\Pi'=\{R+Q\mid R\in T,Q\in\Pi\}$ of images of sets in $T\times\Pi$ under the homeomorphism $+\fncolon\rho\times \nbar{B}\to\nbar{P}$ is locally finite in $\nbar{P}$. Because $\nbar{P}$ is closed in $W(\Sigma)$, this gives us that $\Pi'$ is locally finite in $W(\Sigma)$.
\end{proof}


\subsection{Subdividing a polyhedral complex}\label{subsec:SubdividingComplex}

We will now take the results from the previous subsection and use them to construct subdivisions of an entire complex.

\begin{lemma}\label{lemma:PutTogetherSubdivision}
Let $\Pi$ be a polyhedral complex in $W$ and let $\widetilde{\Pi}$ be a collection of polyhedra such that each $Q\in\widetilde{\Pi}$ is contained in some $P\in\Pi$ and for each $P\in\Pi$, $\{Q\in\widetilde{\Pi}\mid Q\subset P\}$ is a subdivision of $P$. Then $\widetilde{\Pi}$ is a polyhedral complex subdividing $\Pi$. 
\end{lemma}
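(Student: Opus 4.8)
\textbf{Proof plan for Lemma \ref{lemma:PutTogetherSubdivision}.}

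The plan is to verify the two defining axioms of a polyhedral complex for $\widetilde{\Pi}$, and then check the subdivision condition, using the corresponding properties of $\Pi$ and of the given subdivisions of each $P\in\Pi$ as black boxes. Throughout, for $P\in\Pi$ write $\widetilde{\Pi}_P:=\{Q\in\widetilde{\Pi}\mid Q\subset P\}$, which by hypothesis is a subdivision of $P$, i.e.\ a polyhedral complex with $|\widetilde{\Pi}_P|=P$.

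First I would check closure under taking faces. Let $Q\in\widetilde{\Pi}$ and let $F\leq Q$. By hypothesis $Q\subset P$ for some $P\in\Pi$, and $Q\in\widetilde{\Pi}_P$, which is a polyhedral complex, so $F\in\widetilde{\Pi}_P\subset\widetilde{\Pi}$. Next, the intersection axiom. Let $Q_1,Q_2\in\widetilde{\Pi}$ be non-disjoint, with $Q_i\subset P_i$, $P_i\in\Pi$. If $P_1=P_2$ this is immediate since $\widetilde{\Pi}_{P_1}$ is a polyhedral complex. In general, $Q_1\cap Q_2\neq\emptyset$ forces $P_1\cap P_2\neq\emptyset$, so $P_1\cap P_2$ is a common face of $P_1$ and $P_2$; call it $G$. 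The key point I expect to need is that $\widetilde{\Pi}_{P_i}$ restricts to a subdivision of the face $G$ — i.e.\ $\{Q\in\widetilde{\Pi}_{P_i}\mid Q\subset G\}$ is a subdivision of $G$. This should follow because in any polyhedral complex subdividing $P_i$, the cells contained in a fixed face $G\leq P_i$ form a subcomplex subdividing $G$: any cell $Q$ of $\widetilde{\Pi}_{P_i}$ meeting the relative interior of $G$ must lie in $G$ (a cell of the subdivision cannot straddle the face $G$ and the rest of $P_i$, since $G$ is cut out of $P_i$ by a supporting hyperplane and $Q$ lies on one side with its relative interior meeting $G$), and conversely every point of $G$ lies in some cell, which then lies in $G$. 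Granting this, both $Q_1$ and $Q_2$ lie in the complex $\{Q\in\widetilde{\Pi}\mid Q\subset G\}$, which equals both $\{Q\in\widetilde{\Pi}_{P_1}\mid Q\subset G\}$ and $\{Q\in\widetilde{\Pi}_{P_2}\mid Q\subset G\}$ — here one uses that a cell $Q\subset G$ lies in $\widetilde{\Pi}_{P_1}$ iff it lies in $\widetilde{\Pi}$ with $Q\subset P_1$, and $G\subset P_1\cap P_2$ makes the two descriptions agree. Since this is a single polyhedral complex (a subdivision of $G$), $Q_1\cap Q_2$ is a face of both $Q_1$ and $Q_2$.

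Finally, the subdivision condition: $|\widetilde{\Pi}|=\bigcup_{Q\in\widetilde{\Pi}}Q=\bigcup_{P\in\Pi}\bigcup_{Q\in\widetilde{\Pi}_P}Q=\bigcup_{P\in\Pi}|\widetilde{\Pi}_P|=\bigcup_{P\in\Pi}P=|\Pi|$, and each $P\in\Pi$ is the union of the polyhedra in $\widetilde{\Pi}_P\subset\widetilde{\Pi}$, so every polyhedron of $\Pi$ is a union of polyhedra of $\widetilde{\Pi}$. Hence $\widetilde{\Pi}$ subdivides $\Pi$.

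The main obstacle is the claim used in the intersection axiom: that the cells of a subdivision of a polyhedron $P$ which are contained in a given face $G\leq P$ themselves form a subdivision of $G$, and the compatibility of this restricted subdivision as seen from $P_1$ versus $P_2$. This is a standard fact about polyhedral subdivisions, and the cleanest argument is via supporting hyperplanes: pick $u\in W^*$ with $G=\{w\in P\mid\angbra{u,w}=a\}$ and $\angbra{u,\cdot}\geq a$ on $P$; then for any cell $Q$ of the subdivision of $P$, $\angbra{u,\cdot}\geq a$ on $Q$, so $Q\cap G=\{w\in Q\mid\angbra{u,w}=a\}$ is a face of $Q$, and the collection of these faces over all $Q$ covering $G$ is readily checked to be a polyhedral complex with support $G$; that it does not depend on whether we view things inside $P_1$ or $P_2$ is clear since it is intrinsic to $G$ and the ambient collection $\widetilde{\Pi}$. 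Everything else is bookkeeping.
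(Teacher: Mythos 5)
Your face-closure step and the verification that $|\widetilde{\Pi}|=|\Pi|$ with each $P\in\Pi$ a union of cells of $\widetilde{\Pi}_P$ are correct and match the paper. The gap is in the intersection axiom: after setting $G:=P_1\cap P_2$ you assert that ``both $Q_1$ and $Q_2$ lie in the complex $\{Q\in\widetilde{\Pi}\mid Q\subset G\}$,'' but there is no reason for $Q_1$ or $Q_2$ to be contained in $G$. Only the intersection $Q_1\cap Q_2$ lies in $G$; the cells themselves typically straddle out of it. For instance, take $\Pi$ to consist of two squares $P_1,P_2$ sharing an edge $G$ (and their faces), with each $P_i$ subdivided trivially, and $Q_i=P_i$: then $Q_1\cap Q_2=G$ but neither $Q_i$ is contained in $G$. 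So the sentence in which you conclude that $Q_1\cap Q_2$ is a face of both, because both lie in a single subdivision of $G$, does not follow as written. (A secondary remark: the ``restriction'' claim you single out as the main obstacle is not needed at all, since $G$ is a face of $P_1\in\Pi$, hence $G\in\Pi$, and the hypothesis of the lemma applies verbatim to $P=G$; also, as literally stated, ``any cell meeting the relative interior of $G$ lies in $G$'' is false---the correct statement concerns cells whose \emph{relative interior} meets $G$---but this becomes moot.)

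The repair is small and is exactly what the paper does: replace $Q_1,Q_2$ by $Q_1\cap G$ and $Q_2\cap G$. Since $\emptyset\neq Q_1\cap Q_2\subset P_1\cap P_2=G$, the face $G$ of $P_i$ meets $Q_i$, and because $Q_i\subset P_i$ the supporting hyperplane cutting out $G$ from $P_i$ shows that $Q_i\cap G$ is a face of $Q_i$; by your face-closure step these faces lie in $\widetilde{\Pi}$, and they are contained in $G$. As $G\in\Pi$, the hypothesis says $\{S\in\widetilde{\Pi}\mid S\subset G\}$ is a polyhedral complex, so $Q_1\cap Q_2=(Q_1\cap G)\cap(Q_2\cap G)$ is a face of both $Q_1\cap G$ and $Q_2\cap G$, and hence, a face of a face being a face, it is a face of both $Q_1$ and $Q_2$. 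With this substitution your argument goes through and coincides with the paper's proof.
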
\begin{proof}
If $Q\in\widetilde{\Pi}$, pick $P\in\Pi$ such that $Q\subset P$. Then $Q$ is in the polyhedral complex $\{S\in\widetilde{\Pi}\mid S\subset P\}$, so every face of $Q$ is also in $\{S\in\widetilde{\Pi}\mid S\subset P\}\subset\widetilde{\Pi}$.

If $Q_1,Q_2\in\widetilde{\Pi}$ are not disjoint, pick $P_1,P_2\in\Pi$ such that $Q_1\subset P_1$ and $Q_2\subset P_2$. We have that $F:=P_1\cap P_2$ is a face of both $P_1$ and $P_2$, and in particular $F\in\Pi$. For $i=1,2$ we have that $Q_i\subset P_i$ and that $F$ is a face of $P_i$ meeting $Q_i$, which implies that $Q_i\cap F$ is a face of $Q_i$. In particular, this gives us that $Q_1\cap F,Q_2\cap F\in\widetilde{\Pi}$, and so $Q_1\cap F$ and $Q_2\cap F$ are both in the polyhedral complex $\{S\in\widetilde{\Pi}\mid S\subset F\}$. Thus $Q_1\cap Q_2=(Q_1\cap F)\cap(Q_2\cap F)$ is a face of both $Q_1\cap F$ and $Q_2\cap F$, and so is also a face of both $Q_1$ and $Q_2$.
\end{proof}

We now describe our subdivision technique. If the following conditions hold we will say that we are in situation $(\star)$.
\begin{itemize}
\item Let $\smallfan$ be a finite fan in $W$.
\item Let $\smallcpx$ be a polyhedral complex in $W$ such that $\rec P\in\smallfan$ for all $P\in\smallcpx$.
\item Let $\bigcpx$ be a polyhedral complex in $W$ which contains $\smallcpx$ as a subcomplex, is locally finite in $W(\smallfan)$, and is such that $\bigfan:=\smallfan\cup\{\rec P\mid P\in\bigcpx\}$ is a fan. 
\item For each $\sigma\in\bigfan\sdrop\smallfan$ pick a ray $\rho_\sigma$ that meets the relative interior of $\sigma$.
\item Let $T_\sigma$ be a polytopal subdivision of $\rho_\sigma$ which is locally finite in $\rho_\sigma$. 
\end{itemize}

\begin{construction}\label{construction:Subdivisions}
Say we are in situation $(\star)$. For each $\sigma\in\bigfan$ let 
$$\bigcpx_\sigma:=\{P\in\bigcpx\mid\rec P=\sigma\},$$ 
and for $P\in\bigcpx_\sigma$ let $B_P$ be the union of those faces of $P$ whose recession cones are proper faces of $\sigma$. We recursively define a sequence $\left\{\widetilde{\bigcpx}(i)\right\}_{i\in\Z_{\geq0}}$ of polyhedral complexes in $W$ as follows. Let $\widetilde{\bigcpx}(0):=\bigcpx_{\{0\}}$. For $i\geq1$ assume we are given $\widetilde{\bigcpx}(i-1)$. 
For every $\sigma\in\bigfan\sdrop \smallfan$ such that $\dim \sigma=i$ and every $P\in\bigcpx_\sigma$, let $\calS(P):=\left\{Q+R\;\middle|\; Q\in\widetilde{\bigcpx}(i-1),Q\subset B_P, R\in T_\sigma\right\}$. 
Finally we define
$$\widetilde{\bigcpx}(i):=\widetilde{\bigcpx}(i-1)\cup\bigg( \bigcup_{\substack{\sigma\in\smallfan\\ \dim\sigma=i}}\bigcpx_{\sigma}\bigg)\cup \bigg(\bigcup_{\substack{\sigma\in\bigfan\sdrop\smallfan\\ \dim\sigma=i}}\bigcup_{P\in\bigcpx_\sigma}\calS(P)\bigg).$$
We also let $\widetilde{\bigcpx}:=\widetilde{\bigcpx}(\dim W)$.
\end{construction}

\begin{proposition}\label{prop:MainSubdivisions}
Say we are in situation $(\star)$. 
Then $\widetilde{\bigcpx}$, as defined in Construction \ref{construction:Subdivisions}, is a polyhedral subdivision of $\bigcpx$ which is locally finite in $W(\smallfan)$ such that 
$\smallcpx\subset\widetilde{\bigcpx}$ and 
$\{\rec P\mid P\in\widetilde{\bigcpx}\}=\smallfan\cap\{\rec P\mid P\in\bigcpx\}$.
\end{proposition}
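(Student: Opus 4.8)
The plan is to run the recursion in Construction~\ref{construction:Subdivisions} and prove by induction on $i$ that each $\widetilde{\bigcpx}(i)$ is a polyhedral complex, that $\left\{Q\in\widetilde{\bigcpx}(i)\mid Q\subset P\right\}$ subdivides $P$ for every $P\in\bigcpx$ with $\dim(\rec P)\leq i$, that $\widetilde{\bigcpx}(i)$ is locally finite in $W(\smallfan)$, and that the recession cones appearing are exactly those of $\smallfan\cap\{\rec P\mid P\in\bigcpx\}$ that have dimension $\leq i$. The base case $i=0$ is immediate since $\widetilde{\bigcpx}(0)=\bigcpx_{\{0\}}$ consists of the bounded cells of $\bigcpx$ (those with recession cone $\{0\}$), which form a subcomplex of the locally finite complex $\bigcpx$. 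For the inductive step, fix $\sigma\in\bigfan$ with $\dim\sigma=i$. If $\sigma\in\smallfan$ we simply adjoin $\bigcpx_\sigma$; since $\bigcpx$ is a polyhedral complex and these cells have recession cone $\sigma\in\smallfan$, no subdivision is needed and compatibility with $\widetilde{\bigcpx}(i-1)$ follows because $\bigcpx$ itself is a complex. If $\sigma\in\bigfan\sdrop\smallfan$, then for each $P\in\bigcpx_\sigma$ the set $B_P$ is the support of the subcomplex $\{F\leq P\mid \rec F<\sigma\}$, and by the inductive hypothesis the collection $\{Q\in\widetilde{\bigcpx}(i-1)\mid Q\subset B_P\}$ is a polyhedral complex with support $B_P$ (each proper face of $P$ has recession cone of dimension $<i$, lies in $\smallfan\cap\{\rec\cdot\}$ by the fan hypothesis on $\bigfan$, and is already subdivided). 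Then Proposition~\ref{prop:SubdivideOnePolyhedron} applied with this complex in the role of $\Pi$ and $T_\sigma$ in the role of $T$ shows $\calS(P)$ is a subdivision of $P$, and Proposition~\ref{prop:LocallyFiniteSubdivideExtendedPolyhedron} (valid since $\sigma\notin\Sigma$ can be arranged: we work inside $W(\smallfan)$, and $\sigma\in\bigfan\sdrop\smallfan$ so $\smallfan$ does not contain $\sigma$, while $\smallfan\cup\Faces(\sigma)\subset\bigfan$ is a fan) shows $\calS(P)$ is locally finite in $W(\smallfan)$.

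Next I would verify that the various pieces glue correctly. The key point is that $\widetilde{\bigcpx}(i)$ is obtained from $\widetilde{\bigcpx}(i-1)$ by adjoining subdivisions of the cells $P\in\bigcpx$ with $\dim(\rec P)=i$, and every new cell is contained in such a $P$; conversely each such $P$ has its boundary $B_P$ already correctly subdivided and compatibly with everything adjoined at stage $i$ sharing that boundary. So I would invoke Lemma~\ref{lemma:PutTogetherSubdivision}: the collection $\widetilde{\bigcpx}(i)$, together with the claim that for each $P\in\bigcpx$ with $\dim(\rec P)\le i$ the subset of cells contained in $P$ subdivides $P$, shows $\widetilde{\bigcpx}(i)$ is a polyhedral complex subdividing the subcomplex $\{P\in\bigcpx\mid \dim(\rec P)\le i\}$. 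One subtlety: two cells $P_1,P_2\in\bigcpx_\sigma$ with $\dim\sigma=i$ meet in a face $F=P_1\cap P_2$, which has $\rec F\le\sigma$; if $\rec F=\sigma$ then $F\in\bigcpx_\sigma$ as well and the uniqueness in Lemma~\ref{lemma:RayBoundaryClosureDecomp}/Lemma~\ref{lemma:RayBoundaryDecomp} (same ray $\rho_\sigma$ used for all cells of $\bigcpx_\sigma$) forces the subdivisions $\calS(P_1),\calS(P_2)$ to agree on $F$; if $\rec F<\sigma$ then $F\subset B_{P_1}\cap B_{P_2}$ and both are subdivided by $\widetilde{\bigcpx}(i-1)$. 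Either way Lemma~\ref{lemma:PutTogetherSubdivision} applies.

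For local finiteness of the whole $\widetilde{\bigcpx}=\widetilde{\bigcpx}(\dim W)$ in $W(\smallfan)$: I would argue that $\widetilde{\bigcpx}$ is a refinement of the locally finite family $\bigcpx$ in which each cell $P$ is replaced by finitely-locally-finitely-many pieces — more precisely, each point $x\in W(\smallfan)$ has a neighborhood $U$ meeting only finitely many $P\in\bigcpx$, and for each such $P$ the family $\calS(P)$ (or $\{P\}$ itself, if $\rec P\in\smallfan$) is locally finite in $W(\smallfan)$ by the above, so we may shrink $U$ to meet only finitely many cells from each of these finitely many families; hence $U$ meets only finitely many cells of $\widetilde{\bigcpx}$. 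Finally, the recession-cone statement: every cell of $\widetilde{\bigcpx}$ is either a cell of $\bigcpx$ with recession cone in $\smallfan$ (the ones adjoined via $\bigcpx_\sigma$ for $\sigma\in\smallfan$, including $\sigma=\{0\}$), or of the form $Q+R$ with $R$ a segment or point in $\rho_\sigma$ and $Q$ a cell over $B_P$; by Lemma~\ref{lemma:MapAndRecessionCommute} applied to the addition map, $\rec(Q+R)=\rec Q+\rec R=\rec Q$ when $R$ is bounded (so $\rec R=\{0\}$), and $\rec Q\in\smallfan$ by induction, while if $R$ is an unbounded tail of $\rho_\sigma$ then $\rec(Q+R)$ is a face of $\sigma$ — but in a locally finite subdivision $T_\sigma$ of a ray, only boundedly many cells are bounded and the one unbounded cell is a tail ray; here I must check that when $\dim\sigma=i$ the unbounded cells of $T_\sigma$ combined with $B_P$ reconstruct only cells whose recession cone is a proper face of $\sigma$ lying in $\smallfan$, which holds because $\rec(Q)<\sigma$, $\rec(Q)\in\smallfan$, and adding the tail ray $\rho_\sigma$ lands the recession cone in $\smallfan\cup\{\text{stuff}\}$ — wait, more carefully: $T_\sigma$ is a subdivision of the full ray $\rho_\sigma$, so one cell of $T_\sigma$ is unbounded and $\rec(Q+R)$ for that cell is a cone strictly between $\rec Q$ and $\sigma$ only if $\rho_\sigma\not\subset\vspan(\rec Q)$, in which case $\rec(Q+R)$ need not be in $\smallfan$. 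This is the step I expect to be the main obstacle, and I would resolve it by observing that the only cells $P$ contributing are those with $\rec P=\sigma\notin\smallfan$, so $P\notin\smallcpx$; the claim $\{\rec P\mid P\in\widetilde{\bigcpx}\}=\smallfan\cap\{\rec P\mid P\in\bigcpx\}$ means precisely that after subdivision no cell retains the "bad" recession cone $\sigma$ — and indeed the construction subdivides $P$ using $B_P+T_\sigma$ where the maximal cells are $B_P + (\text{tail of }\rho_\sigma)$ whose recession cone is $\rec(B_P\text{-cell})+\rho_\sigma$, a cone that is strictly smaller than $\sigma$ (it does not contain all of $\sigma$) and — crucially because $\rho_\sigma$ was chosen inside a cone that is being "cut" — lies in $\bigfan$; iterating the recursion up through dimension $i$ ensures any such residual cone of dimension $<i$ not in $\smallfan$ gets subdivided again at its own stage. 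So the real content is: the recursion, processing cones in increasing dimension, guarantees that by the time we reach dimension $i$ every cell whose recession cone is a $\bigfan\sdrop\smallfan$ cone of dimension $<i$ has already been eliminated, and the cells produced at stage $i$ have recession cones that are either in $\smallfan$ or of dimension $<i$ in $\bigfan\sdrop\smallfan$ — but the latter is impossible, since a recession cone of a cell in $\widetilde{\bigcpx}(i)$ of dimension $<i$ was fixed by stage $i-1$. Tracking this invariant through the induction is the crux; everything else is bookkeeping with the cited lemmas.
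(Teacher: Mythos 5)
Your overall structure — induction on $i$, the case split on $\sigma\in\smallfan$ versus $\sigma\in\bigfan\sdrop\smallfan$, invoking Lemma~\ref{lemma:PutTogetherSubdivision} for gluing, Proposition~\ref{prop:SubdivideOnePolyhedron} for the subdivision of each $P$, and Proposition~\ref{prop:LocallyFiniteSubdivideExtendedPolyhedron} for local finiteness — is exactly what the paper does, and your local-finiteness argument for the full $\widetilde\bigcpx$ is fine (indeed, a bit more careful than the paper's).

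The genuine gap is in your recession-cone discussion, and it comes from a single misreading of Construction~\ref{construction:Subdivisions}: you write ``$T_\sigma$ is a subdivision of the full ray $\rho_\sigma$, so one cell of $T_\sigma$ is unbounded.'' But situation $(\star)$ requires $T_\sigma$ to be a \emph{polytopal} subdivision of $\rho_\sigma$. A polytope is bounded, so \emph{every} cell $R\in T_\sigma$ is a point or a bounded segment — $T_\sigma$ is an infinite, locally finite chopping of the ray into bounded pieces (exactly what makes the theorem nontrivial and forces us to pass to locally finite, rather than finite, complexes). With this in hand the recession-cone claim is immediate, as you yourself compute in the first half of that paragraph: for any new cell $Q+R\in\calS(P)$ one has $\rec(Q+R)=\rec Q+\rec R=\rec Q$ since $\rec R=\{0\}$, and $\rec Q\in\smallfan$ by the inductive hypothesis applied to $Q\in\widetilde\bigcpx(i-1)$. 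The entire speculative discussion that follows — residual cones ``strictly between $\rec Q$ and $\sigma$,'' tails of $\rho_\sigma$, cones that ``need not be in $\smallfan$'' — is chasing a phantom, and the paper accordingly dismisses parts (4) and (5) in one line. Fixing the misreading of ``polytopal'' closes the gap and brings your argument in line with the paper's.
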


Before we prove Proposition \ref{prop:MainSubdivisions}, we consider an example to help the reader build intuition for Construction \ref{construction:Subdivisions}.

\begin{example}\label{example:Three}
Let $W=\R^2$ and consider the polyhedra
\begin{align*}
P_1&=\{(x,y)\in \R^2\mid x\geq 1,y\geq0\},\\
P_2&=\{(x,y)\in \R^2\mid 1\geq x\geq 0,y\geq0\},\\
P_3&=\{(x,y)\in \R^2\mid x\leq 0,y\geq0\},\\
P_4&=\{(x,y)\in \R^2\mid x\leq 0,y\leq0\},\\
P_5&=\{(x,y)\in \R^2\mid 1\geq x\geq 0,y\leq0\},\text{ and}\\
P_6&=\{(x,y)\in \R^2\mid x\geq 1,y\leq0\}.\\
\end{align*}
Let $\smallcpx$ be the polyhedral complex with with maximal cells $P_j$ for $j\in\{4,5,6\}$ and let $\bigcpx$ be the complex with maximal cells $P_j$ for $j\in\{1,2,3,4,5,6\}$. The complexes $\smallcpx$ and $\bigcpx$ are drawn in the following pictures.
\begin{center}\begin{tikzpicture}[scale=\scalefactor]
\fill[gray!10!white] (0,0) -- (0,-2) -- (-2,-2) -- (-2,0) -- cycle;
\fill[gray!10!white] (.5,0) -- (.5,-2) -- (2.5,-2) -- (2.5,0) -- cycle;
\fill[gray!10!white] (0,0) -- (0,-2) -- (.5,-2) -- (.5,0) -- cycle;
\draw(0,0) -- (0,-2);
\draw(.5,0) -- (.5,-2);
\draw(0,0) -- (.5,0);
\draw(.5,0) -- (2.5,0);
\draw(0,0) -- (-2,0);
\node[below] at (.25,-2.2) {$\smallcpx$};
\fill[gray!10!white] (6.5,0) -- (6.5,2) -- (8.5,2) -- (8.5,0) -- cycle;
\fill[gray!10!white] (6,0) -- (6,2) -- (6.5,2) -- (6.5,0) -- cycle;
\fill[gray!10!white] (6,0) -- (6,2) -- (4,2) -- (4,0) -- cycle;
\fill[gray!10!white] (6,0) -- (6,-2) -- (4,-2) -- (4,0) -- cycle;
\fill[gray!10!white] (6.5,0) -- (6.5,-2) -- (8.5,-2) -- (8.5,0) -- cycle;
\fill[gray!10!white] (6,0) -- (6,-2) -- (6.5,-2) -- (6.5,0) -- cycle;
\draw(6,0) -- (6,2);
\draw(6.5,0) -- (6.5,2);
\draw(6,0) -- (6,-2);
\draw(6.5,0) -- (6.5,-2);
\draw(6,0) -- (6.5,0);
\draw(6.5,0) -- (8.5,0);
\draw(6,0) -- (4,0);
\node[below] at (6.25,-2.2) {$\bigcpx$};
\end{tikzpicture}\end{center}
Let $\smallfan:=\{\rec P\mid P\in\smallcpx\}$ and $\bigfan:=\{\rec P\mid P\in\bigcpx\}$. 
Note that $\bigfan\sdrop\smallfan$ consists of the three cones $\sigma_1:=\{(x,y)\mid x\geq 0,y\geq 0\}$, $\sigma_2:=\{(0,y)\mid y\geq 0\}$ and $\sigma_3:=\{(x,y)\mid x\leq 0, y\geq 0\}$. 
We have that $\sigma_2$ is a ray so $\rho_{\sigma_2}=\sigma_2$. For particular choices of $\rho_{\sigma_1},\rho_{\sigma_3},T_{\sigma_1},T_{\sigma_2},$ and $T_{\sigma_3}$, the following pictures show $\{P\in\bigcpx\mid \dim(\rec P)\leq i\}$ and its subdivision $\widetilde{\bigcpx}(i)$ for $i\in\{0,1,2\}$.
%
%
\begin{center}\begin{tikzpicture}[scale=\scalefactor]
\draw(0,0) -- (0.5,0);
\draw[lightgray, very thin] (-2,-2) -- (2.5,-2) -- (2.5,2) -- (-2,2) -- cycle;
\node[below] at (0.25,-2.2) {$\{P\in\bigcpx\mid\dim(\rec P)\leq 0\}=\widetilde{\bigcpx}(0)$};
\end{tikzpicture}\end{center}
%
%
\begin{center}\begin{tikzpicture}[scale=\scalefactor]
\fill[gray!10!white] (0,0) -- (0,2) -- (.5,2) -- (.5,0) -- cycle;
\fill[gray!10!white] (0,0) -- (0,-2) -- (0.5,-2) -- (0.5,0) -- cycle;
\draw(0,0) -- (0,2);
\draw(0.5,0) -- (0.5,2);
\draw(0,0) -- (0,-2);
\draw(0.5,0) -- (0.5,-2);
\draw(0,0) -- (0.5,0);
\draw(0.5,0) -- (2.5,0);
\draw(0,0) -- (-2,0);
\node[below] at (0.25,-2.2) {$\{P\in\bigcpx\mid\dim(\rec P)\leq 1\}$};
\fill[gray!10!white] (6+0,0) -- (6+0,3/2) -- (6+.5,3/2) -- (6+.5,0) -- cycle;
\fill[gray!10!white] (6+0,0) -- (6+0,-2) -- (6+.5,-2) -- (6+.5,0) -- cycle;
\draw (6+0,0) -- (6+0,3/2);
\draw (6+0.5,0) -- (6+0.5,3/2);
\foreach \x in {1,2,3}
	\draw (6+0,3/2+\x/8) circle (0.5pt);
\foreach \x in {1,2,3}
	\draw (6+0.5,3/2+\x/8) circle (0.5pt);
\draw (6+0,0) -- (6+0,-2);
\draw (6+0.5,0) -- (6+0.5,-2);
\draw (6+0,0) -- (6+.5,0);
\draw (6+0.5,0) -- (6+2.5,0);
\draw (6+0,0) -- (6-2,0);
\foreach \x in {1,2,3}
	\draw (6+0,\x/2) -- (6+.5,\x/2);
\node[below] at (6+0.25,-2.2) {$\widetilde{\bigcpx}(1)$};
\end{tikzpicture}\end{center}
%
%
\begin{center}\begin{tikzpicture}[scale=\scalefactor]
\fill[gray!10!white] (.5,0) -- (0.5,2) -- (2.5,2) -- (2.5,0) -- cycle;
\fill[gray!10!white] (0,0) -- (0,2) -- (.5,2) -- (.5,0) -- cycle;
\fill[gray!10!white] (0,0) -- (0,2) -- (-2,2) -- (-2,0) -- cycle;
\fill[gray!10!white] (0,0) -- (0,-2) -- (-2,-2) -- (-2,0) -- cycle;
\fill[gray!10!white] (.5,0) -- (0.5,-2) -- (2.5,-2) -- (2.5,0) -- cycle;
\fill[gray!10!white] (0,0) -- (0,-2) -- (0.5,-2) -- (0.5,0) -- cycle;
\draw(0,0) -- (0,2);
\draw(0.5,0) -- (0.5,2);
\draw(0,0) -- (0,-2);
\draw(0.5,0) -- (0.5,-2);
\draw(0,0) -- (0.5,0);
\draw(0.5,0) -- (2.5,0);
\draw(0,0) -- (-2,0);
\node[below] at (0.25,-2.2) {$\{P\in\bigcpx\mid\dim(\rec P)\leq 2\}=\bigcpx$};
\fill[gray!10!white] (6+0.5,0) -- (6+0.5,3/2) -- (6+1/2+.5,2) -- (6+1/2+.5,3/2) -- (6+1+.5,2) -- (6+1+.5,3/2) -- (6+6/4+.5,2) -- (6+6/4+.5,6/4) -- (6+2+.5,6/4) -- (6+2+.5,0) -- cycle;
\fill[gray!10!white] (6+0,0) -- (6+0,3/2) -- (6-1,2) -- (6-1,3/2) -- (6-5/3,11/6) -- (6-5/3,5/6) -- (6-2,5/6) -- (6-2,0) -- cycle;
\fill[gray!10!white] (6+0,0) -- (6+0,-2) -- (6-2,-2) -- (6-2,0) -- cycle;
\fill[gray!10!white] (6+0.5,0) -- (6+0.5,-2) -- (6+2.5,-2) -- (6+2.5,0) -- cycle;
\fill[gray!10!white] (6+0,0) -- (6+0,3/2) -- (6+.5,3/2) -- (6+.5,0) -- cycle;
\fill[gray!10!white] (6+0,0) -- (6+0,-2) -- (6+.5,-2) -- (6+.5,0) -- cycle;
\draw (6+0,0) -- (6+0,3/2);
\draw (6+0.5,0) -- (6+0.5,3/2);
\foreach \x in {1,2,3}
	\draw (6+0,3/2+\x/8) circle (0.5pt);
\foreach \x in {1,2,3}
	\draw (6+0.5,3/2+\x/8) circle (0.5pt);
\draw (6+0,0) -- (6+0,-2);
\draw (6+0.5,0) -- (6+0.5,-2);
\draw (6+0,0) -- (6+.5,0);
\draw (6+0.5,0) -- (6+2.5,0);
\draw (6+0,0) -- (6-2,0);
\foreach \x in {2,4,6}
	\draw(6+2.5,\x/4) -- (6+\x/4+.5,\x/4) -- (6+\x/4+.5,2);
\foreach \x in {1,3,5}
	\draw(6+2+.5,\x/4) -- (6+\x/4+.5,\x/4) -- (6+\x/4+.5,7/4);
\draw(6+0.5,0) -- (6+6/4+.5,6/4);
\draw(6+0.5,1/2) -- (6+6/4+.5,2);
\draw(6+0.5,1) -- (6+1+.5,2);
\draw(6+0.5,3/2) -- (6+1/2+.5,2);
\foreach \x in {1,2,3}
	\filldraw (6+6/4+2*\x/16+.5,6/4+2*\x/16) circle (0.5pt);
\foreach \x in {1,2,3}
	\draw (6+0,\x/2) -- (6+.5,\x/2);
%
%
%
%
\foreach \x in {1,4}
	\draw(6-\x/3,10/6) -- (6-\x/3,\x/6) -- (6-2,\x/6);
	\foreach \x in {2,5}
	\draw(6-\x/3,11/6) -- (6-\x/3,\x/6) -- (6-2,\x/6);
\draw(6-1,2) -- (6-1,1/2) -- (6-2,1/2);
\foreach \x in {0,1,2}
	\draw (6+0,\x/2) -- (6-5/3,5/6+\x/2);
\draw (6+0,3/2) -- (6-1,2);
\foreach \x in {1,2,3}
	\filldraw (6-5/3-\x/12,5/6+\x/24) circle (0.5pt);
\node[below] at (6+0.25,-2.2) {$\widetilde{\bigcpx}(2)=\widetilde{\bigcpx}$};
\end{tikzpicture}\end{center}
\end{example}

\begin{proof}[Proof of Proposition \ref{prop:MainSubdivisions}]
We prove by induction on $i\in\Z_{\geq0}$ that:
\begin{enumerate}
\item\label{itemThm2pf:PolyhedralComplex} $\widetilde{\bigcpx}(i)$ is a polyhedral complex,
\item\label{itemThm2pf:LocallyFinite} $\widetilde{\bigcpx}(i)$ is locally finite in $W(\Sigma)$,
\item\label{itemThm2Pf:Subdivision} $\widetilde{\bigcpx}(i)$ subdivides the polyhedral complex $\Pi (i):=\{P\in\bigcpx\mid \dim(\rec P)\leq i\}$,
\item\label{itemThm2Pf:SigmaSubset} $\{P\in \smallcpx\mid \dim(\rec P)\leq i\}\subset\{P\in\bigcpx\mid \rec P\in\smallfan,\dim(\rec P)\leq i\}\subset\widetilde{\bigcpx}(i)$, and
\item\label{itemThm2Pf:RecFan} $\{\rec P\mid P\in\widetilde{\bigcpx}(i)\}$ is the set $\smallfan\cap\{\rec P\mid P\in\bigcpx,\dim(\rec P)\leq i\}$.
\end{enumerate}
The theorem will then follow by plugging in $i=\dim W$. The base case $i=0$ is clear. For the inductive step, let $i\geq 1$.

Part (\ref{itemThm2pf:PolyhedralComplex}) follows from Lemma \ref{lemma:PutTogetherSubdivision}. We will verify that for any $P\in \bigcpx(i)$ the collection $\{Q\in \widetilde{\bigcpx}(i) \mid Q\subseteq P\}$ is a polyhedral complex, by the inductive hypothesis we know this is the case for $\dim (\rec P) < i$. Let $P\in \bigcpx(i)$, $\dim (\rec  P) =i$. Consider the case when $\rec P\in \Sigma$. Then for $F$ a face of $P$, $\rec F \in \Sigma$; therefore $\{Q\in \widetilde{\bigcpx}(i) \mid Q\subseteq P\}$ consists of the faces of $P$ which is a polyhedral complex. Now consider the case when $\rec P \in\bigfan\sdrop\smallfan$. Then we have that  $\{Q\in \widetilde{\bigcpx}(i) \mid Q\subseteq P\}= \calS (P)$, which is a polyhedral subdivision by Proposition \ref{prop:SubdivideOnePolyhedron}.

To see that (\ref{itemThm2pf:LocallyFinite}) note that
$\widetilde{\bigcpx}(i-1)$ is locally finite in $W(\Sigma)$, and  because $\Pi$ is locally finite in $W(\Sigma)$, then the collection of $\Pi_{\sigma}$, $\sigma \in \Sigma '$ and $\dim \sigma =i $ is locally finite in $W(\Sigma)$. By Proposition \ref{prop:LocallyFiniteSubdivideExtendedPolyhedron} we have that $\calS (P)$ is locally finite in $W(\Sigma)$. It follows then that $ \widetilde{\bigcpx}(i)$ is locally finite in $W(\Sigma)$.

Part (\ref{itemThm2Pf:Subdivision}) follows from the fact that for $P\in \Pi(i)$ either $P\in \widetilde{\bigcpx}(i)$ or $\widetilde{\bigcpx}(i)$ contains $\calS (P)$. We know $\calS (P)$ is a subdivision of $P$ by Proposition \ref{prop:SubdivideOnePolyhedron}.

Parts (\ref{itemThm2Pf:SigmaSubset}) and (\ref{itemThm2Pf:RecFan}) follow immediately from the inductive hypothesis and the construction of $\widetilde{\bigcpx}(i)$.

\end{proof}


\section{Locally finite completions of polyhedral complexes}\label{sec:LocallyFiniteCompletions}

We now use Proposition \ref{prop:MainSubdivisions} to deduce the existence of completions of polyhedral complexes with various properties. 
In \S \ref{section:GeneralSubdivisionCompletionTheorems} we prove general theorems on the existence of locally finite subdivisions and completions. We then apply them to our main theorems.

\subsection{General subdivision and completion theorems}\label{section:GeneralSubdivisionCompletionTheorems}

To state our subdivision results in full generality we require a few technical definitions. 
Let $\calP$ be a class of polyhedra in $W$ and let $\calC=\calC(\calP)$ be the class of recession cones of polyhedra $P\in\calP$. 
A \emph{$\calP$-complex} is a polyhedral complex consisting of polyhedra in $\calP$. The definitions of a $\calC$-fan, a $\calP$-completion, and a $\calP$-polytopal subdivision are analogous. 
If $\Sigma$ is a $\calC$-fan then we define $(\calP,\Sigma)$ to be the class of those $P\in\calP$ with $\rec P\in\Sigma$.

\begin{defi}
We say that $\calP$ satisfies condition $(\dagger)$ if 
\begin{enumerate}[(I)]
\item\label{itemConditionDaggerPart:MinkowskiSum} $\calP$ is closed under taking Minkowski sums,
\item\label{itemConditionDaggerPart:RayExists} for every nonzero $\sigma\in\calC$ there is a ray $\rho\in\calC$ which meets the relative interior of $\sigma$,
\item\label{itemConditionDaggerPart:SubdividingRay} for every ray $\rho\in\calC$ there is a $\calP$-polytopal subdivision of $\rho$ which is locally finite in $\rho$.
\end{enumerate}
\end{defi}

Using this definition, we can state our general subdivision theorem.

\begin{thm}\label{thm:GeneralSubdivisions}
Let $\calP$ be a class of polyhedra in $W$ which satisfies condition $(\dagger)$. 
Let $\smallfan$ be a finite $\calC$-fan.
Suppose that $\smallcpx\subset\bigcpx$ is an extension of $\calP$-complexes which are both locally finite in $W(\smallfan)$.
Assume $\rec P\in\smallfan$ for $P\in\smallcpx$ and $\bigfan:=\smallfan\cup\{\rec P\mid P\in\bigcpx\}$ is a fan. Then there is a $\calP$-subdivision $\widetilde{\bigcpx}$ of $\bigcpx$ which is locally finite in $W(\smallfan)$ such that $\smallcpx\subset\widetilde{\bigcpx}$ and $\{\rec P\mid P\in\widetilde{\bigcpx}\}=\smallfan\cap\{\rec P\mid P\in\bigcpx\}$.
\end{thm}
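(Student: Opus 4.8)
The plan is to reduce Theorem~\ref{thm:GeneralSubdivisions} to Proposition~\ref{prop:MainSubdivisions}: condition $(\dagger)$ is designed precisely so that we can enter situation $(\star)$ with all the auxiliary data drawn from $\calP$, and then the only thing left to check beyond Proposition~\ref{prop:MainSubdivisions} is that the subdivision it produces is itself a $\calP$-complex.

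First I would note that every $\sigma\in\bigfan\sdrop\smallfan$ is nonzero: $\smallfan$ is a fan, so it contains the zero cone (a face of every cone in it), and hence the zero cone is not in $\bigfan\sdrop\smallfan$. Using part~(\ref{itemConditionDaggerPart:RayExists}) of $(\dagger)$, for each such $\sigma$ pick a ray $\rho_\sigma\in\calC$ meeting the relative interior of $\sigma$, and using part~(\ref{itemConditionDaggerPart:SubdividingRay}) pick a $\calP$-polytopal subdivision $T_\sigma$ of $\rho_\sigma$ that is locally finite in $\rho_\sigma$. Combined with the hypotheses of the theorem --- $\smallfan$ a finite fan, $\smallcpx$ a $\calP$-complex with $\rec P\in\smallfan$ for all $P\in\smallcpx$, and $\bigcpx\supseteq\smallcpx$ a $\calP$-complex locally finite in $W(\smallfan)$ with $\bigfan=\smallfan\cup\{\rec P\mid P\in\bigcpx\}$ a fan --- this is exactly situation $(\star)$. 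Proposition~\ref{prop:MainSubdivisions} then yields $\widetilde{\bigcpx}$, a polyhedral subdivision of $\bigcpx$, locally finite in $W(\smallfan)$, with $\smallcpx\subset\widetilde{\bigcpx}$ and $\{\rec P\mid P\in\widetilde{\bigcpx}\}=\smallfan\cap\{\rec P\mid P\in\bigcpx\}$.

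It then remains to show $\widetilde{\bigcpx}$ is a $\calP$-complex, which I would do by inducting on $i$ to prove that every polyhedron in $\widetilde{\bigcpx}(i)$ lies in $\calP$. The base case is $\widetilde{\bigcpx}(0)=\bigcpx_{\{0\}}\subset\bigcpx$. For the inductive step, inspecting Construction~\ref{construction:Subdivisions}, each polyhedron of $\widetilde{\bigcpx}(i)$ either already lies in $\widetilde{\bigcpx}(i-1)$, or lies in some $\bigcpx_\sigma$ with $\sigma\in\smallfan$ and hence in $\bigcpx\subseteq\calP$, or has the form $Q+R$ with $Q\in\widetilde{\bigcpx}(i-1)$ (so $Q\in\calP$ by the inductive hypothesis) and $R\in T_\sigma\subseteq\calP$ (since $T_\sigma$ is a $\calP$-polytopal subdivision); in the last case part~(\ref{itemConditionDaggerPart:MinkowskiSum}) of $(\dagger)$ gives $Q+R\in\calP$. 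Plugging in $i=\dim W$ gives the claim.

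I do not anticipate a real obstacle: the substantive work is already contained in Proposition~\ref{prop:MainSubdivisions}, and condition $(\dagger)$ was tailored to make the reduction go through. The only points needing care are the trivial observation that $\bigfan\sdrop\smallfan$ contains no zero cone (so that part~(\ref{itemConditionDaggerPart:RayExists}) applies to each relevant $\sigma$), and keeping track in the induction that the constraint $Q\subset B_P$ appearing in $\calS(P)$ only restricts which $Q$ occur, not which class they belong to, so it causes no trouble.
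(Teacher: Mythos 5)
Your proposal is correct and follows essentially the same route as the paper's proof: enter situation $(\star)$ via parts (\ref{itemConditionDaggerPart:RayExists}) and (\ref{itemConditionDaggerPart:SubdividingRay}) of $(\dagger)$, invoke Proposition \ref{prop:MainSubdivisions}, and then induct through Construction \ref{construction:Subdivisions} using part (\ref{itemConditionDaggerPart:MinkowskiSum}) to see that every cell lies in $\calP$. Your version is in fact slightly cleaner on one small point: you justify membership of $\widetilde{\bigcpx}(0)$ and of the $\bigcpx_\sigma$ with $\sigma\in\smallfan$ via $\bigcpx$ being a $\calP$-complex, whereas the paper asserts containment in $\smallcpx$, which need not hold but is harmless for the same reason you give.
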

\begin{proof}
For each $\sigma\in\bigfan\sdrop\smallfan$ pick a ray $\rho_{\sigma}\in\calC$ which meets the relative interior of $\sigma$ and a $\calP$-polytopal subdivision $T_{\sigma}$ of $\rho_\sigma$ which is locally finite in $\rho_\sigma$. 
This puts us in situation $(\star)$. Let $\widetilde{\bigcpx}(i)$ for $i\in\Z_{\geq0}$, $\bigcpx_{\sigma}$, and $\calS (P)$ be defined as in Construction \ref{construction:Subdivisions}. 
Induction on $i$ shows that every $S\in\widetilde{\bigcpx}(i)$, $S\in \calP$. Indeed, the base case is true by assumption, as $\widetilde{\bigcpx}(0)= \bigcpx_{\{0\}}\subseteq \smallcpx $. For $i>0$ we have by definition:
$$
\widetilde{\bigcpx}(i):=\widetilde{\bigcpx}(i-1)\cup\bigg(\bigcup_{\substack{\sigma\in\smallfan\\ \dim\sigma=i}}\bigcpx_{\sigma}\bigg)\cup\bigg(\bigcup_{\substack{\sigma\in\bigfan\sdrop\smallfan\\ \dim\sigma=i}}\bigcup_{P\in\bigcpx_\sigma}\calS(P)\bigg).
$$
By the inductive step the elements of $\widetilde{\bigcpx}(i-1)$ are contained in $\calP$. By definition $\bigcpx_{\sigma}\subseteq \smallcpx$ and so all its elements are in $\calP$ by assumption. Any element of $\calS (P)$ is a Minkowski sum of an element of $\widetilde{\bigcpx}(i-1)$, and an element of $T_{\sigma}$ for some $\sigma$. Thus by assumption any element of $\calS (P)$ is in $\calP$. Thus each $S\in\widetilde{\bigcpx}(i)$ is in $\calP$. 
By Proposition \ref{prop:MainSubdivisions}, $\widetilde{\bigcpx}:=\widetilde{\bigcpx}(\dim W)$ is the desired $\calP$-subdivision of $\bigcpx$.
\end{proof}

Condition $(\dagger)$ is easy to verify in many cases.

\begin{prop}\label{prop:CasesSatisfyingEasyConditionDagger}
Let $N$ be a lattice. The following classes of polyhedra in $N_{\R}$ satisfy condition $(\dagger)$.
\begin{enumerate}
\item\label{itemEasyConditionCase:SubfieldRational} The class of all $\subfieldOfR$-definable polyhedra for a fixed subfield $\subfieldOfR$ of $\R$. 
\item\label{itemEasyConditionCase:GammaRational} The class of all $\Gamma$-rational polyhedra for a fixed nontrivial additive subgroup $\Gamma$ of $\R$.
\item\label{itemEasyConditionCase:GammaRationalWVertices} The class of all $\Gamma$-rational polyhedra with vertices in $N_{\Gamma}$ for a fixed nontrivial additive subgroup $\Gamma$ of $\R$.

\end{enumerate}
\end{prop}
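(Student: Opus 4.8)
The plan is to verify the three parts of condition $(\dagger)$ for each of the three classes, treating all three cases in parallel since the arguments are nearly identical. For part (\ref{itemConditionDaggerPart:MinkowskiSum}), closure under Minkowski sums: for the $\subfieldOfR$-definable case this is exactly the characterization recalled in \S\ref{sec:Preliminaries} (an $\subfieldOfR$-definable polyhedron is a Minkowski sum of an $\subfieldOfR$-definable polytope and an $\subfieldOfR$-definable cone, and the sum of two such polytopes, resp.\ cones, is again of that form since one takes convex hulls of the union of vertex sets, resp.\ unions of ray generators). For the $\Gamma$-rational case, closure under Minkowski sums is stated explicitly in \S\ref{sec:Preliminaries} as a consequence of the affine-hull characterization of $\Gamma$-rationality. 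For the class of $\Gamma$-rational polyhedra with vertices in $N_\Gamma$, I would argue that if $P = \conv(S) + \sigma$ and $P' = \conv(S') + \sigma'$ with $S, S' \subset N_\Gamma$ finite and $\sigma, \sigma'$ rational cones, then $P + P' = \conv(S+S') + (\sigma+\sigma')$; the vertices of $P+P'$ form a subset of $S+S' \subset N_\Gamma$ (every vertex of a Minkowski sum is a sum of vertices of the summands), and $\sigma + \sigma'$ is a rational cone, so $P+P'$ is again $\Gamma$-rational with vertices in $N_\Gamma$.

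For part (\ref{itemConditionDaggerPart:RayExists}), I need that for every nonzero cone $\sigma$ in the relevant class $\calC$ there is a ray $\rho \in \calC$ meeting the relative interior of $\sigma$. In the $\subfieldOfR$-definable case, $\calC$ is the class of $\subfieldOfR$-definable (equivalently, $\subfieldOfR$-generated) cones; pick any finite set of generators $w_1,\dots,w_m \in N_\subfieldOfR$ of $\sigma$ with at least one $w_i \neq 0$, and observe that $w_1 + \cdots + w_m$ (or, if that happens to vanish, a suitable positive combination — but for a pointed picture one can just take the sum of generators of a maximal linearly independent subset together with enough of the rest) lies in the relative interior of $\sigma$ and is a point of $N_\subfieldOfR$, so $\R_{\geq 0}(w_1 + \cdots + w_m)$ is the desired ray. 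Actually, the cleanest statement: the barycenter-type vector $\sum_i w_i$ of any generating set lies in the relative interior of $\cone(w_1,\dots,w_m)$, and it is nonzero whenever $\sigma$ is pointed and nonzero; when $\sigma$ contains a line one instead picks generators spanning $\vspan\sigma$ and sums them. For the $\Gamma$-rational and $\Gamma$-rational-with-$N_\Gamma$-vertices cases, $\calC$ is the class of rational cones (cones are automatically bounded-vertex, the single vertex being $0 \in N_\Gamma$), and the same construction works using generators $w_i \in N \subset N_\Gamma$.

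For part (\ref{itemConditionDaggerPart:SubdividingRay}), I need that every ray $\rho \in \calC$ admits a $\calP$-polytopal subdivision locally finite in $\rho$. Write $\rho = \R_{\geq 0} v$ with $v$ chosen appropriately: in case (\ref{itemEasyConditionCase:SubfieldRational}) take $v \in N_\subfieldOfR$; in cases (\ref{itemEasyConditionCase:GammaRational}) and (\ref{itemEasyConditionCase:GammaRationalWVertices}) one must take care that the subdivision points land where required. Since $\Gamma$ is nontrivial, pick $0 \neq \gamma \in \Gamma$; since $\rho$ is rational we may take $v \in N$ a primitive generator, and then set the subdivision points to be $\{\, \tfrac{k\gamma'}{\,} v : k \in \Z_{\geq 0}\}$ — more precisely, scale so that the segments have endpoints in $N_\Gamma$: the points $k\gamma \cdot v$ for $k \in \Z_{\geq 0}$ lie in $N_\Gamma$ (as $v \in N$, $k\gamma \in \Gamma$), they are $\Gamma$-rational, and the segments between consecutive ones are $\Gamma$-rational polytopes with vertices in $N_\Gamma$, covering $\rho$ and forming a complex that is locally finite in $\rho$ (each bounded interval of $\rho$ meets only finitely many segments). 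In the $\subfieldOfR$-definable case one similarly subdivides at the points $k v$, $k \in \Z_{\geq 0}$, with $v \in N_\subfieldOfR$. I expect part (\ref{itemConditionDaggerPart:SubdividingRay}) in the $\Gamma$-rational-with-vertices-in-$N_\Gamma$ case to be the only place requiring genuine attention — one must use nontriviality of $\Gamma$ to produce subdivision points in $N_\Gamma$ rather than merely in $N_{\Q\Gamma}$ — but it is handled by the explicit choice above, and all remaining verifications are routine.
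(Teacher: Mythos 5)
Your proposal is correct and takes essentially the same approach as the paper: closure under Minkowski sums via the characterizations recalled in the preliminaries, a ray through a point of $N_{\subfieldOfR}$ (resp.\ $N$) in the relative interior for condition (II), and subdivision of a ray at the points $kv$ (resp.\ $k\gamma v$, which already lie in $N_{\Gamma}$, so the third class needs no separate construction) for condition (III). The only deviation is cosmetic: for (II) the paper invokes density of $\subfieldOfR$ in $\R$ rather than summing a generating set, which sidesteps the degenerate case you hedge about (the sum of generators can vanish only when $\sigma$ is a linear subspace, and then any single ray $\R_{\geq0}w_i$ already meets the relative interior).
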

\begin{proof}
(\ref{itemEasyConditionCase:SubfieldRational}):
For (\ref{itemConditionDaggerPart:MinkowskiSum}), the fact that the Minkowski sum of two $\subfieldOfR$-definable polyhedra is $\subfieldOfR$-definable follows from the description of $\subfieldOfR$-definable polyhedra as Minkowski sums of $\subfieldOfR$-definable polytopes and $\subfieldOfR$-definable pointed cones.
For (\ref{itemConditionDaggerPart:RayExists}), consider a nonzero $\subfieldOfR$-definable cone $\sigma$. Because $\F$ is dense in $\R$ it follows that the intersection of $N_{\subfieldOfR}=N\otimes_{\Z}\subfieldOfR$ with the relative interior of $\sigma$ is nonempty.
For (\ref{itemConditionDaggerPart:SubdividingRay}) given an $\subfieldOfR$-definable ray $\rho$, write $\rho=\R_{\geq0}v$ for some nonzero $v\in N_{\subfieldOfR}$. Then
$$\{nv\mid n\in\Z_{\geq0}\}\cup\{\conv(nv,(n+1)v)\mid n\in\Z_{\geq0}\}$$
is a polytopal subdivision of $\rho$, which is locally finite in $\rho$, and whose elements are $\subfieldOfR$-definable. 

(\ref{itemEasyConditionCase:GammaRational}):
For (\ref{itemConditionDaggerPart:MinkowskiSum}), we know $\calP$ is closed under taking Minkowski sums. 
For (\ref{itemConditionDaggerPart:RayExists}) note that $\calC$ is the class of all rational cones in $N_{\R}$, so (\ref{itemConditionDaggerPart:RayExists}) follows from (\ref{itemEasyConditionCase:SubfieldRational}) with $\subfieldOfR=\Q$. 
For (\ref{itemConditionDaggerPart:SubdividingRay}), if $\rho$ is a ray in $\calC$ then $\rho=\R_{\geq0}v$ for some $v\in N$, so if we fix $\gamma\in\Gamma$ then 
$$\{n\gamma v\mid n\in\Z_{\geq0}\}\cup\{\conv(n\gamma v,(n+1)\gamma v)\mid n\in\Z_{\geq0}\}$$
is a $\Gamma$-rational polytopal subdivision of $\rho$ which is locally finite in $\rho$.

(\ref{itemEasyConditionCase:GammaRationalWVertices}):
Part (\ref{itemConditionDaggerPart:MinkowskiSum}) follows from (\ref{itemEasyConditionCase:GammaRational}) and the fact that $N_{\Gamma}$ is a group. Parts (\ref{itemConditionDaggerPart:RayExists}) and (\ref{itemConditionDaggerPart:SubdividingRay}) follow from (\ref{itemEasyConditionCase:GammaRational}) as a sufficiently large integer mutltiple of an element of $N_{\Q\Gamma}$ is in $N_{\Gamma}$.
\end{proof}

In order to state our general result on locally finite completions, we need one more definition.

\begin{defi}
We say that $\calP$ \emph{admits finite recession-restricted completions} if, whenever $\smallfan$ is a finite $\calC$-fan and $\smallcpx$ is a finite $(\calP,\smallfan)$-complex, there is a finite $\calP$-completion, $\bigcpx$, of $\smallcpx$ such that $\{\rec P\mid P\in \bigcpx\}\cup\smallfan$ is a fan.
\end{defi}

\begin{thm}\label{thm:GeneralLocallyFiniteCompletions}
Let $\calP$ be a class of polyhedra in $W$ which has satisfies condition $(\dagger)$ and admits finite recession-restricted completions. Then, for any finite $\calC$-fan $\smallfan$, any finite $(\calP,\smallfan)$-complex $\cpxToComplete$ has a $\calP$-completion $\completeCpx$ which is locally finite in $W(\smallfan)$ and satisfies $\{\rec P\mid P\in\completeCpx\}=\smallfan$.
\end{thm}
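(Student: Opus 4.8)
The plan is to first replace $\cpxToComplete$ by a finite completion using the recession-restricted completion hypothesis, and then run the subdivision machinery of \S\ref{section:GeneralSubdivisionCompletionTheorems}. Since $\smallfan$ is a finite $\calC$-fan and $\cpxToComplete$ is a finite $(\calP,\smallfan)$-complex, the assumption that $\calP$ admits finite recession-restricted completions provides a finite $\calP$-completion $\bigcpx$ of $\cpxToComplete$ such that $\bigfan:=\smallfan\cup\{\rec P\mid P\in\bigcpx\}$ is a fan.

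The one nontrivial observation is that in fact $\smallfan\subseteq\{\rec P\mid P\in\bigcpx\}$. First, $\{\rec P\mid P\in\bigcpx\}$ is a complete fan: it is closed under passing to faces (a face of $\rec P$ is the recession cone of a face of $P$, which again lies in $\bigcpx$) and sits inside the fan $\bigfan$, so it is a fan; and it is complete because for any $v\in W$ the ray $\R_{\geq0}v$ meets only finitely many polyhedra of the finite complete complex $\bigcpx$, so one of them contains a translate $u+\R_{\geq0}v$, forcing $v\in\rec P$ for that polyhedron. Now given $\tau\in\smallfan$, a relative interior point $x$ of $\tau$ lies in some $\sigma\in\{\rec P\mid P\in\bigcpx\}$; inside the fan $\bigfan$ the intersection $\tau\cap\sigma$ is a face of $\tau$ containing $x$, hence equals $\tau$, so $\tau$ is a face of $\sigma$ and thus $\tau\in\{\rec P\mid P\in\bigcpx\}$. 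Therefore $\smallfan\cap\{\rec P\mid P\in\bigcpx\}=\smallfan$.

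It then remains to apply Theorem \ref{thm:GeneralSubdivisions} to the extension $\cpxToComplete\subset\bigcpx$: both complexes are finite and hence locally finite in $W(\smallfan)$, every $P\in\cpxToComplete$ has $\rec P\in\smallfan$, and $\bigfan$ is a fan. This produces a $\calP$-subdivision $\widetilde{\bigcpx}$ of $\bigcpx$ that is locally finite in $W(\smallfan)$, contains $\cpxToComplete$ as a subcomplex, and has $\{\rec P\mid P\in\widetilde{\bigcpx}\}=\smallfan\cap\{\rec P\mid P\in\bigcpx\}=\smallfan$ by the previous paragraph. Since $\widetilde{\bigcpx}$ subdivides $\bigcpx$ we get $|\widetilde{\bigcpx}|=|\bigcpx|=W$, so $\widetilde{\bigcpx}$ is complete and we may take $\completeCpx:=\widetilde{\bigcpx}$.

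The main obstacle is the middle paragraph: the definition of admitting finite recession-restricted completions only supplies a completion whose recession cones, \emph{together with} $\smallfan$, form a fan, so one has to extract the inclusion $\smallfan\subseteq\{\rec P\mid P\in\bigcpx\}$ from completeness (and finiteness) of $\bigcpx$; everything else is bookkeeping and a direct invocation of Theorem \ref{thm:GeneralSubdivisions}.
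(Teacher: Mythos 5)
Your proposal is correct and follows essentially the same route as the paper: take a finite recession-restricted $\calP$-completion $\bigcpx$, show $\smallfan\subseteq\{\rec P\mid P\in\bigcpx\}$, and then apply Theorem \ref{thm:GeneralSubdivisions} to the extension $\cpxToComplete\subset\bigcpx$. The only difference is that where the paper cites \cite[Theorem 3.4]{RecessionFan} for the fact that the recession cones of a finite complete complex form a complete fan, you verify this directly (closure under faces plus the unbounded-ray argument), which is a fine self-contained substitute for the citation.
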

\begin{proof}
Because $\calP$ admits finite recession-restricted completions, $\cpxToComplete$ admits a finite completion $\bigcpx$ such that $\{\rec P\mid P\in\bigcpx\}\cup\smallfan$ is a fan. 
Since $\bigcpx$ is a finite complete complex, $\{\rec P\mid P\in\bigcpx\}$ is a complete complex of cones \cite[Theorem 3.4]{RecessionFan}. 
Because $\{\rec P\mid P\in\bigcpx\}\cup\smallfan$ is also a complex, this gives us that $\{\rec P\mid P\in\bigcpx\}$ contains $\smallfan$. 
Because $\calP$ satisfies condition $(\dagger)$, Theorem \ref{thm:GeneralSubdivisions} gives us that there is a $\calP$-subdivision $\subdividedCpx$ of $\bigcpx$ which is locally finite in $W(\smallfan)$ such that $\smallcpx\subset\subdividedCpx$ and $\{\rec P\mid P\in\subdividedCpx\}=\smallfan\cap\{\rec P\mid P\in\bigcpx\}=\smallfan$. 
Thus $\completeCpx:=\subdividedCpx$ is as desired.
\end{proof}

In order to apply Theorem \ref{thm:GeneralLocallyFiniteCompletions} to particular cases we will want to show that various classes of polyhedra satisfy condition $(\dagger)$ and admit finite recession-restricted completions. 
Proposition \ref{prop:CasesSatisfyingEasyConditionDagger} tells us that certain classes of polyhedra satisfy condition $(\dagger)$, and to see that some of these classes also admit finite recession-restricted completions we will use completion results from the literature. 
The completion results are stated in terms of fans, and so we will use the following lemma to translate these results into statements about complexes. First, given $P$ a polyhedron in $W$ let
$$c(P):=\nbar{\{(tw,t)\mid t\in\R_{\geq0}, w\in P\}}\subseteq N_{\R}\times\R_{\geq0}.$$

\begin{lemma}\label{lemma:ComplexToFan}
Let $\Pi$ be a polyhedral complex in $W$ and let $\Sigma$ be a fan in $W$ such that for all $P\in\Pi$, $\rec P\in\Sigma$. Then $\{c(P)\mid P\in\Pi\}\cup\{\sigma\times\{0\}\mid\sigma\in\Sigma\}$ is a fan in $W\times\R_{\geq0}$.
\end{lemma}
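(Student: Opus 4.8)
The plan is to verify the two axioms of a polyhedral complex for the collection $\mathcal{F} := \{c(P)\mid P\in\Pi\}\cup\{\sigma\times\{0\}\mid\sigma\in\Sigma\}$, after first understanding the face structure of each $c(P)$. The key preliminary observation is that for a polyhedron $P$ in $W$, the cone $c(P)$ is a pointed polyhedral cone in $W\times\R_{\geq 0}$ whose intersection with the hyperplane $W\times\{t\}$ for $t>0$ is $tP\times\{t\}$, whose intersection with $W\times\{0\}$ is $\rec(P)\times\{0\}$, and whose proper faces are exactly the cones $c(F)$ for $F$ a face of $P$ together with the cones $\tau\times\{0\}$ for $\tau$ a face of $\rec P$; moreover $c(F) = c(P)\cap c(F)$ sits as a face of $c(P)$, and $c(F)\cap(W\times\{0\}) = \rec(F)\times\{0\}$. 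First I would record these facts (they are standard properties of the ``homogenization'' or cone-over-a-polyhedron construction), so that the face poset of $c(P)$ is identified with the disjoint union of the face poset of $P$ and the face poset of $\rec P$, glued along $\rec P \hookleftarrow \{\text{faces }F\text{ with }\rec F\}$ in the obvious way.

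With that in hand, the first axiom (closure under faces) is immediate: a face of $c(P)$ is either $c(F)$ for $F\leq P$, in which case $c(F)\in\mathcal{F}$ since $\Pi$ is closed under faces; or it is $\tau\times\{0\}$ for $\tau\leq\rec P$, in which case $\tau\in\Sigma$ because $\rec P\in\Sigma$ and $\Sigma$ is a fan, so $\tau\times\{0\}\in\mathcal{F}$. A face of $\sigma\times\{0\}$ with $\sigma\in\Sigma$ is $\tau\times\{0\}$ with $\tau\leq\sigma$, hence in $\Sigma$, hence in $\mathcal{F}$. All cones in $\mathcal{F}$ are pointed since $P$ and $\sigma$ are pointed (here I use that $\Sigma$ consists of pointed cones and that $\Pi$ being a polyhedral complex forces its polyhedra to be pointed, or rather: $c(P)$ is automatically pointed as a cone over a set lying in the affine hyperplane $t=1$).

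For the second axiom I must check that any two cones in $\mathcal{F}$ meet in a common face. There are three cases. (a) Two cones $\sigma_1\times\{0\},\sigma_2\times\{0\}$ with $\sigma_i\in\Sigma$: their intersection is $(\sigma_1\cap\sigma_2)\times\{0\}$, which is a common face since $\Sigma$ is a fan. (b) $c(P)$ and $\sigma\times\{0\}$ with $P\in\Pi$, $\sigma\in\Sigma$: since $\sigma\times\{0\}\subseteq W\times\{0\}$, the intersection is $(c(P)\cap(W\times\{0\}))\cap(\sigma\times\{0\}) = (\rec P\cap\sigma)\times\{0\}$; now $\rec P\in\Sigma$, so $\rec P\cap\sigma$ is a common face of $\rec P$ and $\sigma$ in $\Sigma$, hence $(\rec P\cap\sigma)\times\{0\}$ is a face of $\sigma\times\{0\}$ and is a face of $c(P)$ (being $\tau\times\{0\}$ for a face $\tau = \rec P\cap\sigma$ of $\rec P$). (c) $c(P_1)$ and $c(P_2)$ with $P_1,P_2\in\Pi$: if $P_1\cap P_2=\emptyset$ I claim $c(P_1)\cap c(P_2) = (\rec P_1\cap\rec P_2)\times\{0\}$, which by the fan property of $\Sigma$ (applied to $\rec P_1,\rec P_2\in\Sigma$) is a common face of both as in case (b); if $P_1\cap P_2 = F$ is a common face, then $c(P_1)\cap c(P_2) = c(F)$, a common face of $c(P_1)$ and $c(P_2)$. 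To justify the description of $c(P_1)\cap c(P_2)$, slice by $W\times\{t\}$: for $t>0$ the slice is $(tP_1\cap tP_2)\times\{t\} = t(P_1\cap P_2)\times\{t\}$, and for $t=0$ it is $(\rec P_1\cap\rec P_2)\times\{0\}$; since $c(P_i)$ is the closure of the union of its positive slices, $c(P_1)\cap c(P_2)$ is the closure of $\bigcup_{t>0} t(P_1\cap P_2)\times\{t\}$, which is $c(P_1\cap P_2)$ when $P_1\cap P_2\neq\emptyset$ and is $(\rec P_1\cap\rec P_2)\times\{0\}$ when $P_1\cap P_2=\emptyset$ — in the latter case one still needs that points of $\rec P_1\cap\rec P_2$ lie in the closure, which follows because $c(P_i)\cap(W\times\{0\}) = \rec P_i\times\{0\}$ is in the closure of the positive slices of $c(P_i)$ by construction.

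The main obstacle is the careful bookkeeping in case (c), specifically pinning down $c(P_1)\cap c(P_2)$ when $P_1$ and $P_2$ are disjoint: one has to argue that the intersection drops entirely into $W\times\{0\}$ and equals $(\rec P_1\cap\rec P_2)\times\{0\}$, rather than containing some unexpected extra faces, and that this set is genuinely a face (not merely a subcone) of each $c(P_i)$ — which is where the hypothesis $\rec P\in\Sigma$ for all $P\in\Pi$, together with $\Sigma$ being a fan, does the real work. Once the face-poset description of $c(P)$ is established cleanly at the start, the rest is a finite case check.
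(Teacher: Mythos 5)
Your proof is correct in substance and supplies an argument the paper only cites: the paper's proof of this lemma is a one-line pointer to the proof of Theorem 3.4(2) of the reference on recession fans, so your self-contained derivation is genuinely additional content. Your route --- describe the face poset of $c(P)$ (its faces are the $c(F)$ for $F\leq P$ together with the $\tau\times\{0\}$ for $\tau\leq\rec P$), then do a three-way case analysis on pairwise intersections --- is the natural one, and each of your cases (a), (b), (c) reaches the right conclusion. Two of the justifications are misstated, however. The parenthetical on pointedness does not stand: the paper's definition of polyhedral complex does not by itself force pointed polyhedra, and the cone over a polyhedron lying in the slice $t=1$ can fail to be pointed (take $P$ containing a line); what actually makes $c(P)$ pointed is the hypothesis $\rec P\in\Sigma$ with $\Sigma$ a fan of pointed cones, since any line in $c(P)\subset W\times\R_{\geq0}$ would have to lie inside $c(P)\cap(W\times\{0\})=\rec P\times\{0\}$. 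And in case (c), the assertion that $c(P_1)\cap c(P_2)$ is ``the closure of $\bigcup_{t>0}t(P_1\cap P_2)\times\{t\}$'' is false precisely when $P_1\cap P_2=\emptyset$, since that closure is then empty; the intersection of two closed sets, each the closure of its positive-height part, need not be the closure of its own positive-height part. No closure argument is needed: your slice-by-slice computation already determines $c(P_1)\cap c(P_2)$ as a set, namely $c(P_1\cap P_2)$ when $P_1\cap P_2\neq\emptyset$ (using $\rec(P_1\cap P_2)=\rec P_1\cap\rec P_2$) and $(\rec P_1\cap\rec P_2)\times\{0\}$ otherwise. With those two sentences repaired the proof is clean.
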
\begin{proof}
This is essentially proven in the proof of Theorem 3.4(2) of \cite{RecessionFan}.
\end{proof}

\begin{remark}
In Example 3.1 of \cite{RecessionFan} it is shown that one cannot take an arbitrary polyhedral complex and construct a fan from cones over the polyhdera and their recession cones. Note that the example given their does \emph{not} satisfy the hypotheses of the above lemma.
\end{remark}

\begin{prop}\label{prop:CasesAdmitingFiniteCompletions}
Let $N$ be a lattice. The following classes of polyhedra in $N_{\R}$ satisfy condition $(\dagger)$ and admit finite recession-restricted completions.
\begin{enumerate}
\item\label{itemFiniteCompletionsCase:SubfieldRational} The class of all $\subfieldOfR$-definable polyhedra for a fixed subfield $\subfieldOfR$ of $\R$. 
\item\label{itemFiniteCompletionsCase:GammaRational}
The class of all $\Gamma$-rational polyhedra for a fixed nontrivial additive subgroup $\Gamma$ of $\R$.
\end{enumerate}
\end{prop}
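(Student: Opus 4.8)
The assertion that both classes satisfy condition $(\dagger)$ is precisely Proposition~\ref{prop:CasesSatisfyingEasyConditionDagger}, parts (\ref{itemEasyConditionCase:SubfieldRational}) and (\ref{itemEasyConditionCase:GammaRational}), so the content is that each class admits finite recession-restricted completions. The plan is to reduce this to a completion statement for \emph{fans} via the cone construction $c(\cdot)$ that precedes Lemma~\ref{lemma:ComplexToFan}. Fix a finite $\calC$-fan $\smallfan$ and a finite $(\calP,\smallfan)$-complex $\cpxToComplete$ in $N_\R$, and set
$$\halfspaceSmallFan:=\{c(P)\mid P\in\cpxToComplete\}\cup\{\sigma\times\{0\}\mid\sigma\in\smallfan\}.$$
Since $\rec P\in\smallfan$ for every $P\in\cpxToComplete$, Lemma~\ref{lemma:ComplexToFan} shows $\halfspaceSmallFan$ is a finite fan in $N_\R\times\R_{\geq0}$, hence a finite fan in $N_\R\times\R$. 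It lies in the appropriate class: writing $P\in\cpxToComplete$ as a Minkowski sum $\conv(v_1,\dots,v_k)+\cone(w_1,\dots,w_\ell)$ gives $c(P)=\cone\big((v_1,1),\dots,(v_k,1),(w_1,0),\dots,(w_\ell,0)\big)$, which is $\subfieldOfR$-definable whenever $P$ is (case~\ref{itemFiniteCompletionsCase:SubfieldRational}); and when $P$ is $\Gamma$-rational each $c(P)$, and each $\sigma\times\{0\}$, is an intersection of half-spaces of $N_\R\times\R$ with inner normals in $M\times\Gamma$, a class of cones whose intersections with $N_\R\times\{1\}$ are exactly the $\Gamma$-rational polyhedra of $N_\R$ (case~\ref{itemFiniteCompletionsCase:GammaRational}).

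The crucial step — and the step I expect to be the main obstacle — is to extend $\halfspaceSmallFan$ to a finite \emph{complete} fan $\halfspaceCompleteFan$ of $N_\R\times\R$ that stays in the same class: a complete $\subfieldOfR$-definable fan in case~(\ref{itemFiniteCompletionsCase:SubfieldRational}), and a complete fan whose cones have inner normals in $M\times\Gamma$ in case~(\ref{itemFiniteCompletionsCase:GammaRational}). This is where the completion results from the literature enter; in case~(\ref{itemFiniteCompletionsCase:GammaRational}) one must moreover recognize the fans coming from $c(\cdot)$ as the admissible fans attached to toric schemes over a valuation ring, so that a known completion theorem applies. Granting $\halfspaceCompleteFan$, put $\completeCpx:=\htone{\halfspaceCompleteFan}{N_\R}$, the polyhedral complex whose cells are the intersections of the cones of $\halfspaceCompleteFan$ with $N_\R\times\{1\}$. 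Then $\completeCpx$ is finite; it is complete because $N_\R\times\{1\}\subset N_\R\times\R=|\halfspaceCompleteFan|$; it is a $\calP$-complex because the intersection with $N_\R\times\{1\}$ of a cone of the relevant class is a polyhedron of $\calP$ (an $\subfieldOfR$-definable cone gives an $\subfieldOfR$-definable polyhedron, and a cone $\{(v,t)\mid\angbra{u_i,v}+\gamma_i t\geq0\text{ for all }i\}$ with $u_i\in M$, $\gamma_i\in\Gamma$ gives $\{v\mid\angbra{u_i,v}\geq-\gamma_i\text{ for all }i\}$); and $\cpxToComplete\subset\completeCpx$ because $c(P)\in\halfspaceSmallFan\subset\halfspaceCompleteFan$ and $c(P)\cap(N_\R\times\{1\})=P$.

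It remains to check recession cones. If a nonempty $P'\in\completeCpx$ comes from $C\in\halfspaceCompleteFan$, then, since $C$ is a cone, $\rec P'=\rec\big(C\cap(N_\R\times\{1\})\big)=\rec C\cap(N_\R\times\{0\})=C\cap(N_\R\times\{0\})=\htzero{C}{N_\R}$. Hence $\{\rec P'\mid P'\in\completeCpx\}\subset\htzero{\halfspaceCompleteFan}{N_\R}$, and the latter is a complete fan of $N_\R$, being the intersection of the complete fan $\halfspaceCompleteFan$ with the subspace $N_\R\times\{0\}$. As $\completeCpx$ is finite and complete, $\{\rec P'\mid P'\in\completeCpx\}$ is itself a complete fan by \cite[Theorem~3.4]{RecessionFan}, and a complete fan contained in a fan must equal it, so $\{\rec P'\mid P'\in\completeCpx\}=\htzero{\halfspaceCompleteFan}{N_\R}$. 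Since $\sigma\times\{0\}\in\halfspaceSmallFan\subset\halfspaceCompleteFan$ for each $\sigma\in\smallfan$, we get $\smallfan\subset\htzero{\halfspaceCompleteFan}{N_\R}=\{\rec P'\mid P'\in\completeCpx\}$, so $\{\rec P'\mid P'\in\completeCpx\}\cup\smallfan$ is a fan. Therefore $\completeCpx$ is a finite $\calP$-completion of $\cpxToComplete$ of the required kind, and both classes admit finite recession-restricted completions.
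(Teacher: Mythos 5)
Your proposal follows the paper's argument essentially step for step: both reduce to fan completion via the cone construction $c(\cdot)$ and Lemma~\ref{lemma:ComplexToFan}, both slice the completed fan at height $1$ to get $\bigcpx$, and both then verify that $\{\rec P\mid P\in\bigcpx\}\cup\smallfan$ is a fan. You correctly identify the external inputs needed — a finite $\subfieldOfR$-definable fan completion for case~(\ref{itemFiniteCompletionsCase:SubfieldRational}), and a finite $\Gamma$-admissible completion (in the sense of toric schemes over a valuation ring) for case~(\ref{itemFiniteCompletionsCase:GammaRational}) — and the paper simply supplies the citations (Ewald--Ishida or Rohrer, and the $\Gamma$-admissible completion theorem, respectively), so what you flag as a possible obstacle is not a gap in the argument, only a missing reference. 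The one place you genuinely diverge is the last step: you show $\{\rec P'\}$ is a complete fan and therefore coincides with $\htzero{\halfspaceCompleteFan}{N_\R}\supset\smallfan$, whereas the paper avoids invoking completeness by appealing to \cite[Lemma~3.5]{RecessionFan} to see directly that $\{\rec P'\}$ is a subfan of $\htzero{\halfspaceCompleteFan}{N_\R}$ containing $\smallfan$; both are sound, and the difference is cosmetic.
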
\begin{proof}
By Proposition \ref{prop:CasesSatisfyingEasyConditionDagger} the classes $\calP$ above satisfy condition $(\dagger)$. 
Given a finite $\calC$-fan $\smallfan$ in $N_{\R}$ and a finite $(\calP,\smallfan)$-complex $\smallcpx$, let 
$$\halfspaceSmallFan:=\{c(P)\mid P\in\smallcpx\}\cup\{\sigma\times\{0\}\mid\sigma\in\smallfan\},$$ 
which is a fan in $N_{\R}\times\R_{\geq0}$ by Lemma \ref{lemma:ComplexToFan}. We now consider the cases separately.

(\ref{itemFiniteCompletionsCase:SubfieldRational}) The fan $\halfspaceSmallFan$ is finite and $\subfieldOfR$-definable, so it admits a finite $\subfieldOfR$-definable completion $\halfspaceBigFan$ in $N_{\R}\times\R$; see \cite[Theorem 5.4]{EwaldIshida} or \cite[Theorem 5.3]{Rohrer}. 
Then the complex $\bigcpx:=\htone{\halfspaceBigFan}{N_{\R}}$ is a finite $\subfieldOfR$-definable completion of $\smallcpx=\htone{\halfspaceSmallFan}{N_{\R}}$. 
For $P\in\bigcpx$ we have $\rec P\in\htzero{\halfspacebigfan}{N_{\R}}$, and \cite[Lemma 3.5]{RecessionFan} gives us that any face of a cone $\sigma\in\{\rec P\mid P\in\bigcpx\}$ is also in $\{\rec P\mid P\in\bigcpx\}$, so $\{\rec P\mid P\in\bigcpx\}$ is a subfan of $\htzero{\halfspacebigfan}{N_{\R}}$. Since $\smallfan=\htzero{\halfspaceSmallFan}{N_{\R}}$ is also a subfan of $\htzero{\halfspacebigfan}{N_{\R}}$, $\{\rec P\mid P\in\bigcpx\}\cup\smallfan$ is a subfan of $\htzero{\halfspacebigfan}{N_{\R}}$. In particular, $\{\rec P\mid P\in\bigcpx\}\cup\smallfan$ is a fan.

(\ref{itemFiniteCompletionsCase:GammaRational}) The fan $\halfspaceSmallFan$ in $N_{\R}\times\R_{\geq0}$ is finite. Furthermore it is $\Gamma$-admissible in the sense of \cite{GublerGuideTrop}. So \cite[Theorem 1.2]{GammaAdmissibleCompletions} tells us that $\halfspaceSmallFan$ admits a finite $\Gamma$-admissible completion $\halfspaceBigFan$ in $N_{\R}\times\R_{\geq0}$. Then $\bigcpx:=\htone{\halfspaceBigFan}{N_{\R}}$ is a finite $\Gamma$-rational completion of $\smallcpx=\htone{\halfspaceSmallFan}{N_{\R}}$. As in part (\ref{itemFiniteCompletionsCase:SubfieldRational}), we get that $\{\rec P\mid P\in\bigcpx\}\cup\smallfan$ is a fan.
\end{proof}

We can now prove the first of our main theorems.

\begin{proof}[Proof of Theorem \ref{thm:LocallyFiniteCompletions}]
The class of $\Gamma$-rational polyhedra satisfies condition $(\dagger)$ by Proposition \ref{prop:CasesSatisfyingEasyConditionDagger} part (\ref{itemEasyConditionCase:GammaRational}). The class of $\Gamma$-rational polyhedra also admits finite recession-restricted completions by Proposition \ref{prop:CasesAdmitingFiniteCompletions}. So the result follows by Theorem \ref{thm:GeneralLocallyFiniteCompletions}. If $\Phi$ admits a completion with vertices in $N_{\Gamma}$  it follows from Proposition \ref{prop:CasesSatisfyingEasyConditionDagger} part (\ref{itemEasyConditionCase:GammaRationalWVertices}) and Theorem \ref{thm:GeneralSubdivisions} that $\nbar{\Phi}$ can be taken to have vertices in $N_{\Gamma}$. 
\end{proof}

\begin{thm}\label{thm:LocallyFiniteKDefinableCompletions}
Fix a lattice $N$ and a subfield $\subfieldOfR$ of $\R$. Let $\smallfan$ be 
a finite $\subfieldOfR$-definable fan 
in $N_{\R}$, and let $\cpxToComplete$ be a finite $\subfieldOfR$-definable polyhedral complex in $N_{\R}$ such that $\rec P\in\smallfan$ for all $P\in\cpxToComplete$. Then $\cpxToComplete$ has an $\subfieldOfR$-definable completion $\completeCpx$ which is locally finite in $N_{\R}(\smallfan)$ and satisfies $\{\rec P\mid P\in\completeCpx\}=\smallfan$. 
\end{thm}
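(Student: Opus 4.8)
The plan is to deduce this theorem directly from the general completion result, Theorem~\ref{thm:GeneralLocallyFiniteCompletions}, applied to the class $\calP$ of all $\subfieldOfR$-definable polyhedra in $N_{\R}$. First I would record the translation of hypotheses: for this $\calP$, the associated class $\calC=\calC(\calP)$ of recession cones is exactly the class of $\subfieldOfR$-definable cones (using the characterization of $\subfieldOfR$-definable cones as those generated by finitely many vectors in $N_{\subfieldOfR}$), so a finite $\subfieldOfR$-definable fan is precisely a finite $\calC$-fan, and a finite $\subfieldOfR$-definable polyhedral complex all of whose recession cones lie in $\smallfan$ is precisely a finite $(\calP,\smallfan)$-complex. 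Thus the data in the theorem statement is exactly the input required by Theorem~\ref{thm:GeneralLocallyFiniteCompletions}.

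Next I would verify the two hypotheses on $\calP$ needed to invoke that theorem. That $\calP$ satisfies condition $(\dagger)$ is Proposition~\ref{prop:CasesSatisfyingEasyConditionDagger}(\ref{itemEasyConditionCase:SubfieldRational}), and that $\calP$ admits finite recession-restricted completions is Proposition~\ref{prop:CasesAdmitingFiniteCompletions}(\ref{itemFiniteCompletionsCase:SubfieldRational}); both have already been established, so nothing new must be proved here. Applying Theorem~\ref{thm:GeneralLocallyFiniteCompletions} then produces an $\subfieldOfR$-definable completion $\completeCpx$ of $\cpxToComplete$ which is locally finite in $N_{\R}(\smallfan)$ and satisfies $\{\rec P\mid P\in\completeCpx\}=\smallfan$, which is the assertion of the theorem.

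Since the argument is a direct citation of earlier results, I do not expect any genuine obstacle. The only point requiring a moment of care is the bookkeeping in the first step — confirming that the abstract notions ``$\calC$-fan'' and ``$(\calP,\smallfan)$-complex'' specialize, for $\calP$ the $\subfieldOfR$-definable polyhedra, to the concrete notions of $\subfieldOfR$-definable fan and $\subfieldOfR$-definable complex with recession cones in $\smallfan$ — so that Theorem~\ref{thm:GeneralLocallyFiniteCompletions} applies verbatim. This mirrors exactly the structure of the proof of Theorem~\ref{thm:LocallyFiniteCompletions}.
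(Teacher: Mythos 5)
Your proposal is correct and matches the paper's proof, which is exactly the one-line deduction from Theorem \ref{thm:GeneralLocallyFiniteCompletions} together with Proposition \ref{prop:CasesAdmitingFiniteCompletions}(\ref{itemFiniteCompletionsCase:SubfieldRational}) (which already packages condition $(\dagger)$ via Proposition \ref{prop:CasesSatisfyingEasyConditionDagger}). Your extra bookkeeping that $\calC$-fans and $(\calP,\smallfan)$-complexes specialize to the $\subfieldOfR$-definable notions is a fine, if routine, verification that the paper leaves implicit.
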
\begin{proof}
This follows from Theorem \ref{thm:GeneralLocallyFiniteCompletions} and Proposition \ref{prop:CasesAdmitingFiniteCompletions} (\ref{itemFiniteCompletionsCase:SubfieldRational}).
\end{proof}

\begin{proof}[Proof of Theorem \ref{thm:LocallyFiniteRealPolytopalCompletions}]
This is the particular case of Theorem \ref{thm:GeneralLocallyFiniteCompletions} where $\smallfan$ is the zero fan and $\calP$ is the class of polyhedra in $W$, which satisfies $(\dagger)$ as Minkowski sums of polytopes are polytopes and the other two conditions are vacuous as polytopes are bounded.
\end{proof}

\subsection{Zonotopal completions}\label{section:ZonotopalCompletions}

In this section we use Theorem \ref{thm:GeneralSubdivisions} to prove Theorem \ref{thm:ZonotopalCompletions}. 
Before starting the proof of Theorem \ref{thm:ZonotopalCompletions} we define the notion of a star-shaped ball in $W$.

\begin{defi}
A subset $D$ of $W$ is called a \emph{star-shaped ball} if there is a point $w\in W$ such that every ray $\rho$ starting at $w$ intersects $\bdry D$ in a unique point $v\neq w$ and $D\cap\rho=\conv(w,v)$. We say that $w$ is a \emph{center} for $D$ and that $D$ is a \emph{star-shaped ball around $w$}.
\end{defi}
\noindent Here $\bdry D$ denotes the topological boundary of $D$ as a subset of $W$.

We follow \cite{EwaldSchulz} and \cite[Ch.\ III.5]{EwaldBook} in using this definition, which is particularly suited to several operations on polyhedral complexes, even though it is somewhat stronger than another common notion of \emph{star-shaped region}. We refer to \cite{EwaldSchulz} and \cite[Ch.\ III.5]{EwaldBook} for relevant facts about star-shaped balls.

The case of Theorem \ref{thm:ZonotopalCompletions} where $|\cpxToComplete|$ is a polytope will be reduced to the case where $|\cpxToComplete|$ is a star-shaped ball. This case will be proved by constructing a certain finite completion of $\cpxToComplete$ and then applying Theorem \ref{thm:GeneralSubdivisions}. The construction of the finite completion of $\cpxToComplete$ works in a broader generality, and so we start with this construction.

Let $\cpxToComplete$ be a finite polytopal complex in $W$ such that $D:=|\cpxToComplete|$ is a star-shaped ball around $0$. For any $F\in\cpxToComplete$ with $F\subset \bdry D$, let $U(F):=\R_{\geq1}F=\{tv\mid t\geq 1, v\in F\}$. We immediately get that $U(F)\cap D=F$. Since $F$ is a polyhedron contained in $\bdry D$, $\aff(F)$ does not contain 0. So there is some $u\in W^*$ and a positive real number $a$ such that $\angbra{u,v}=a$ for all $v\in F$. Note that $U(F)=(\R_{\geq0}F)\cap\{v\in W\mid \angbra{u,v}\geq a\}$, and is therefore a polyhedron. Also, because every face of an intersection of two polyhedra can be written as an intersection of faces of the original polyhedra, and because the faces of $\R_{\geq0}F$ are exactly the zero cone and $\R_{\geq0}G$ for $G$ a face of $F$, we have that the faces of $U(F)$ are exactly the faces of $F$ and the polyhedra $U(G)$ where $G$ is a face of $F$.

\begin{lemma}\label{lemma:ExtendingStarshapedComplex}
For any finite polytopal complex $\cpxToComplete$ in $W$ such that $D:=|\cpxToComplete|$ is a star-shaped ball around $0$, $\cpxToComplete\cup\{U(F)\mid F\in\cpxToComplete, F\subset\bdry D\}$ is a complete polyhedral complex, where $U(F):=\R_{\geq1}F$.
\end{lemma}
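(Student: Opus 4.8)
The plan is to verify the two axioms of a polyhedral complex for the collection $\cpxToComplete':=\cpxToComplete\cup\{U(F)\mid F\in\cpxToComplete,\ F\subset\bdry D\}$ and then to check completeness. The preceding paragraph already establishes that $U(F)$ is a polyhedron and, crucially, that its faces are exactly the faces of $F$ together with the polyhedra $U(G)$ for $G$ a face of $F$; since $F\subset\bdry D$ implies every face $G$ of $F$ satisfies $G\subset\bdry D$ as well, the collection $\cpxToComplete'$ is closed under taking faces. So the real content is the intersection axiom and completeness.

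First I would handle the intersection axiom, splitting into three cases according to which of the two pieces the cells come from. If $P,Q\in\cpxToComplete$ there is nothing to do. If $P=U(F)$ and $Q=U(G)$ with $F,G\in\cpxToComplete$, $F,G\subset\bdry D$, I would argue that $U(F)\cap U(G)=U(F\cap G)$: the containment $\supseteq$ is clear, and for $\subseteq$, a point $w=sv=tv'$ with $v\in F$, $v'\in G$, $s,t\geq 1$ lies on a unique ray through $0$, which meets $\bdry D$ in a single point, forcing $sv=tv'$ to have $v=v'\in F\cap G$ (using $D\cap\rho=\conv(0,v)$ and that $F,G\subset\bdry D$), whence $s=t$ and $w\in U(F\cap G)=U(F)\cap D\cdot$(scaling); then $F\cap G$ is a face of both $F$ and $G$ since $\cpxToComplete$ is a complex, so by the face description $U(F\cap G)$ is a face of both $U(F)$ and $U(G)$. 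If $P\in\cpxToComplete$ and $Q=U(F)$, I would show $P\cap U(F)=P\cap F$: indeed $U(F)\cap D=F$ and $P\subseteq D$, so $P\cap U(F)=P\cap(U(F)\cap D)=P\cap F$, which is a face of both $P$ and $F$ (as $\cpxToComplete$ is a complex), hence a face of $P$ and, being a face of $F$, also a face of $U(F)$ by the face description. This exhausts all cases.

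For completeness, I would use the star-shaped hypothesis directly: given any $w\in W\sdrop\{0\}$, let $\rho$ be the ray from $0$ through $w$. By definition $\rho\cap\bdry D$ is a single point $v\neq 0$ and $\rho\cap D=\conv(0,v)$. Since $\cpxToComplete$ is a finite polytopal complex with support $D$ and $v\in\bdry D$, there is a cell $F\in\cpxToComplete$ containing $v$; taking $F$ minimal, $v$ lies in the relative interior of $F$, and since a neighborhood of $v$ in $W$ meets points outside $D$ while $F\subseteq D$, the cell $F$ must be contained in $\bdry D$. If $w\in\conv(0,v)$ then $w\in D=|\cpxToComplete|$; otherwise $w=tv$ for some $t>1$ (it is a nonnegative multiple of $v$ because it is on $\rho$, and not in $\conv(0,v)$), so $w\in U(F)$. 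Thus every point of $W$ lies in some cell of $\cpxToComplete'$, i.e.\ $|\cpxToComplete'|=W$.

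The main obstacle I anticipate is the bookkeeping in the intersection axiom — in particular pinning down $U(F)\cap U(G)=U(F\cap G)$ cleanly, which is where the uniqueness clause in the definition of star-shaped ball ("every ray from $0$ meets $\bdry D$ in a unique point and $D\cap\rho=\conv(0,v)$") does the essential work: without it, two boundary cells could meet $\bdry D$ at different scales along the same ray. Everything else reduces to the face description of $U(F)$ established before the lemma statement and to the fact that $\cpxToComplete$ is itself a polyhedral complex, so I would expect the write-up to be short once that one identity is nailed down.
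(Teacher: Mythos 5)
Your proposal takes exactly the route the paper intends: the paper's own "proof" just points to the standard argument that coning over the boundary complex of a star-shaped ball gives a complete fan and leaves the details to the reader, and your write-up supplies precisely those details — face-closure via the face description of $U(F)$, the intersection axiom via $U(F)\cap U(G)=U(F\cap G)$ and $P\cap U(F)=P\cap F$ (both resting on the uniqueness clause in the definition and on $U(F)\cap D=F$), and completeness via radial projection. That structure is correct and is more explicit than what the paper records.

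One step, however, is justified by an inference that is not valid as stated: in the completeness argument you claim that the minimal cell $F$ containing the boundary point $v$ satisfies $F\subseteq\bdry D$ "since a neighborhood of $v$ meets points outside $D$ while $F\subseteq D$." For a general closed region $D$ and a polytope $F\subseteq D$, having one relative-interior point of $F$ on $\bdry D$ does not force $F\subseteq\bdry D$ (e.g.\ the segment $[-1,1]\times\{0\}$ inside the star-shaped region $\{(x,y):y\leq|x|\}$ touches the boundary only at its midpoint). What makes the claim true here is the complex structure, not the single boundary contact: since $v\in\operatorname{relint}F$, every cell $C\in\cpxToComplete$ containing $v$ meets $F$ in a face of $F$ containing $v$, hence contains $F$, and $F=C\cap F$ is then a face of $C$; consequently, near any point $p\in\operatorname{relint}F$ the support $D$ coincides with $p$ plus the union of the tangent cones at $F$ of the cells containing $F$, a set independent of $p$. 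So either all of $\operatorname{relint}F$ is interior to $D$ or all of it lies in $\bdry D$; as $v\in\bdry D$, the latter holds, and since $\bdry D$ is closed and $F$ is the closure of $\operatorname{relint}F$, we get $F\subseteq\bdry D$. (This same local-structure fact is what rules out a cell like the segment in the counterexample above ever belonging to such a complex.) With that step repaired, your proof is complete and agrees with the paper's intended argument.
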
\begin{proof}
Given what we have established in the preceding paragraph, the proof is a straightforward analogue of the standard proof that, under the same hypotheses, $\{\R_{\geq0}F\mid F\in\cpxToComplete, F\subset\bdry D\}\cup\{0\}$ is a complete fan. We leave the details to the reader.
\end{proof}

 Let $\Pzon$ be the class of polyhedra $P\subset W$ such that every bounded face of $P$ is a zonotope. In order to use Theorem \ref{thm:GeneralSubdivisions} to obtain an appropriate subdivision of the completion constructed above, we need to show that $\Pzon$ satisfies $(\dagger)$.

\begin{lemma}\label{lemma:PzonSatsDagger}
The class $\Pzon$ satisfies condition $(\dagger)$.
\end{lemma}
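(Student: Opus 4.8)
The plan is to verify the three parts of condition $(\dagger)$ for $\Pzon$ directly, using the description of faces of a polyhedron in $\Pzon$ and basic facts about zonotopes. Recall that the bounded faces of a polyhedron $P$ are exactly the faces $F$ with $\rec F = \{0\}$, so $P \in \Pzon$ means precisely that every face $F$ of $P$ with $\rec F = \{0\}$ is a zonotope.

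For part $(\ref{itemConditionDaggerPart:MinkowskiSum})$, suppose $P, Q \in \Pzon$ and let $F$ be a bounded face of $P + Q$. Every face of a Minkowski sum $P+Q$ is of the form $F_P + F_Q$ where $F_P$ is a face of $P$, $F_Q$ is a face of $Q$, and both are the faces on which a common linear functional $u$ is minimized; moreover $\rec(F_P + F_Q) = \rec F_P + \rec F_Q$ by Lemma \ref{lemma:MapAndRecessionCommute} (applied to the Minkowski sum, or directly). Since $\rec F_P + \rec F_Q = \{0\}$ forces $\rec F_P = \rec F_Q = \{0\}$, both $F_P$ and $F_Q$ are bounded faces, hence zonotopes by hypothesis, and a Minkowski sum of two zonotopes is a zonotope. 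So $F$ is a zonotope and $P + Q \in \Pzon$. Part $(\ref{itemConditionDaggerPart:RayExists})$ concerns the class $\calC = \calC(\Pzon)$ of recession cones: I would observe that a line segment lies in $\Pzon$ (its only bounded face is itself, which is a zonotope), so every ray is a recession cone of a member of $\Pzon$, i.e. $\calC$ contains all rays; but in fact $\calC$ is the class of \emph{all} pointed cones in $W$, since any pointed cone $\sigma$ is the recession cone of $\sigma$ itself, and a pointed cone viewed as a polyhedron has $\{0\}$ as its only bounded face, which is (trivially) a zonotope. Hence for any nonzero $\sigma \in \calC$ there is certainly a ray $\rho \in \calC$ through the relative interior of $\sigma$ (pick any point in the relative interior and take the ray it spans). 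Part $(\ref{itemConditionDaggerPart:SubdividingRay})$ is then immediate: given a ray $\rho = \R_{\geq 0} v \in \calC$, the subdivision $\{nv \mid n \in \Z_{\geq 0}\} \cup \{\conv(nv, (n+1)v) \mid n \in \Z_{\geq 0}\}$ is locally finite in $\rho$, and each of its members is either a point or a line segment, hence a zonotope, hence lies in $\Pzon$; this is a $\Pzon$-polytopal subdivision of $\rho$.

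The main obstacle — really the only substantive point — is part $(\ref{itemConditionDaggerPart:MinkowskiSum})$, and specifically making sure the bookkeeping on faces of a Minkowski sum is airtight: one must use that every face of $P+Q$ arises as $F_P + F_Q$ with $F_P, F_Q$ the minimizing faces of $P, Q$ for a common functional, and that the recession cone of this face splits as $\rec F_P + \rec F_Q$, so that boundedness of the face of $P+Q$ propagates to boundedness of the corresponding faces of $P$ and $Q$. Once that is in hand, closure of zonotopes under Minkowski sum (a zonotope being by definition a finite Minkowski sum of segments) finishes it. Parts $(\ref{itemConditionDaggerPart:RayExists})$ and $(\ref{itemConditionDaggerPart:SubdividingRay})$ are routine, exactly as in the proof of Proposition \ref{prop:CasesSatisfyingEasyConditionDagger}, once one notes $\calC(\Pzon)$ is the class of all pointed cones.
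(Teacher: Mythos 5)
Your proof is correct and takes the same approach as the paper: closure of zonotopes under Minkowski sum for condition (I), and the triviality of (II) and (III) once one notes $\calC(\Pzon)$ contains all (pointed) cones, so a ray can always be chosen and subdivided into points and segments. You supply slightly more detail than the paper by explaining, via $\rec(F_P+F_Q)=\rec F_P+\rec F_Q$, why a bounded face of $P+Q$ decomposes into bounded faces of $P$ and $Q$ (the paper asserts this directly); the momentary aside suggesting a line segment has a ray as recession cone is a harmless slip, as your immediately following argument --- any pointed cone is its own recession cone and lies in $\Pzon$ since its only bounded face is the origin --- is the one that actually carries the claim.
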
\begin{proof}
For $P,Q\in\Pzon$, any bounded face of $P+Q$ is the Minkowski sum of a bounded face $F$ of $P$ and a bounded face $G$ of $Q$. Then $F$ and $G$ are zonotopes so $F+G$ is also a zonotope. 
The class $\calC_{zon}=\{\rec P\mid P\in\Pzon\}$ consists of all cones in $W$, so (\ref{itemConditionDaggerPart:RayExists}) and (\ref{itemConditionDaggerPart:SubdividingRay}) follow from the fact that any ray is contained in $\calC$ and a subdivision of a ray is consists of points, line segments, and possibly a ray. 
\end{proof}

We now have all of the ingredients we need to prove Theorem \ref{thm:ZonotopalCompletions}.

\begin{proof}[Proof of Theorem \ref{thm:ZonotopalCompletions}]
First suppose that $|\cpxToComplete|$ is a star-shaped ball $D$. By shifting we may assume without loss of generality that $0$ is a center for $D$. Lemma \ref{lemma:ExtendingStarshapedComplex} tells us that $\bigCpx:=\cpxToComplete\cup\{U(F)\mid F\in\cpxToComplete, F\subset\bdry D\}$ is a complete polyhedral complex in $W$. If $F\in\cpxToComplete$ is contained in $\bdry D$ then every bounded face of $U(F)$ is also a face of $F$, and is therefore a zonotope. Thus $\bigCpx$ is a $\Pzon$-complex. Since $\cpxToComplete\subset\bigCpx$ consists of polytopes we have that the zero cone is in $\{\rec P\mid P\in\bigCpx\}$, so by \cite[Corollary 3.10]{RecessionFan}, $\{\rec P\mid P\in\bigCpx\}$ is a fan. 
By Lemma \ref{lemma:PzonSatsDagger} we can apply Theorem \ref{thm:GeneralSubdivisions} with $\calP=\Pzon$ and $\smallfan$ being the zero fan to get that there is a $\Pzon$-subdivision $\completeCpx$ of $\bigCpx$ which is locally finite in $W(\smallfan)=W$ such that $\cpxToComplete\subset\completeCpx$ and $\{\rec P\mid P\in\completeCpx\}$ is the zero fan. 
Because $\bigCpx$ is complete, $\completeCpx$ is a completion of $\cpxToComplete$. Finally, $\{\rec P\mid P\in\completeCpx\}$ being the zero fan gives us that $\completeCpx$ is a polytopal $\Pzon$-complex, i.e., a zonotopal complex.

Now suppose that $|\cpxToComplete|$ is a polytope $P$. If $|\cpxToComplete|$ is a full-dimensional polytope in $W$, then $|\cpxToComplete|$ is a star-shaped ball with each point in the interior of $|\cpxToComplete|$ being a center, so the desired completion exists by the previous case. If $P$ is not full-dimensional then we can reduce to the full-dimensonal case as follows. By shifting we may assume without loss of generality that 0 is in the affine span of $P$, so $\aff(P)$ is a linear subspace of $W$. Let $V\subset W$ be a linear complement to $\aff(P)$, so $W=\aff(P)\oplus V$. Let $Q$ be a full-dimensional cube in $V$ with 0 as a vertex. Then the direct sum of $\cpxToComplete$ and the complex of all faces of $Q$ is a finite zonotopal subdivision of the full-dimensional polytope $P+Q$ in $W$ which contains $\cpxToComplete$ as a subcomplex. 
\end{proof}

\bibliographystyle{alpha}
\bibliography{LocallyFinite}

\end{document}